\newcommand{\R}{\mathbb{R}}
\newcommand{\C}{\mathbb{C}}
\newcommand{\N}{\mathbb{N}}
\newcommand{\Z}{\mathbb{Z}}
\numberwithin{equation}{section}
\theoremstyle{plain}
\newtheorem{thm}[equation]{Theorem}
\newtheorem{lem}[equation]{Lemma}
\newtheorem{prop}[equation]{Proposition}
\newtheorem{cor}[equation]{Corollary}
\theoremstyle{definition}
\theoremstyle{remark}
\newtheorem{rem}[equation]{Remark}
\author{Michael T. Lacey}
\address[M.T.L.]{ School of Mathematics, Georgia Institute of Technology, Atlanta GA 30332, USA}
\email {lacey@math.gatech.edu}
\thanks{Research of M.T.L. is supported in part by grant NSF-DMS 0968499, and the Australian Research Council through grant ARC-DP120100399.}
\author{Henri Martikainen}
\address[H.M.]{Department of Mathematics and Statistics, University of Helsinki, P.O.B. 68, FI-00014 Helsinki, Finland}
\email{henri.martikainen@helsinki.fi}
\thanks{Research of H.M. is supported by the Academy of Finland through the grant
Multiparameter dyadic harmonic analysis and probabilistic methods.}
\subjclass[2010]{42B20}
\keywords{Calder\'on--Zygmund operator, non-homogeneous analysis, local $Tb$, $L^2$ test functions}
\title[Local $Tb$ theorem with $L^2$ testing conditions and general measures]{Local $Tb$ theorem with $L^2$ testing conditions and general measures: Calder\'on--Zygmund operators}
\begin{document}
\begin{abstract}
Local $Tb$ theorems with $L^p$ type testing conditions, which are not scale invariant, have been studied widely in the case of the Lebesgue measure. Until very recently,
local $Tb$ theorems in the non-homogeneous case had only been proved assuming scale invariant ($L^{\infty}$ or BMO) testing conditions. The combination of non-scale-invariance and general measures
is a delicate issue. In a previous paper
we overcame this obstacle in the model case of square functions defined using general measures. In this paper we finally tackle the very demanding case of Calder\'on--Zygmund operators.
That is, we prove a non-homogeneous local $Tb$ theorem with $L^2$ type testing conditions for all Calder\'on--Zygmund operators. In doing so we prove general twisted martingale transform
inequalities which turn out to be subtle in our general framework.
\end{abstract}

\maketitle

\section{Introduction}
In this paper we prove the boundedness of a Calder\'on--Zygmund operator $T$ on $L^2(\mu)$, where $\mu$ can be non-homogeneous,
assuming only local and non-scale-invariant testing conditions. While such local $Tb$ theorems with $L^p$ testing functions are known in the
homogeneous case, proving such a result in the non-homogeneous setting is delicate. He we are able to do this for the first time. The proof requires extensive development and
usage of the techniques of non-homogeneous and two-weight dyadic analysis.

Let us begin by introducing the setting and formulate our main theorem. We assume that $\mu$ is a measure on $\R^n$ satisfying only the size condition $\mu(B(x,r)) \lesssim r^m$ for some $m$.
We consider Calder\'on--Zygmund operators $T$ in this setting. First of all, this means that there is a kernel $K \colon \R^n \times \R^n \setminus \{(x,y): x = y\} \to \C$ for which
there holds for some $C < \infty$ and $\alpha > 0$ that
\begin{displaymath}
|K(x,y)| \le \frac{C}{|x-y|^m}, \qquad x \ne y,
\end{displaymath}
\begin{displaymath}
|K(x,y) - K(x',y)| \le C\frac{|x-x'|^{\alpha}}{|x-y|^{m+\alpha}}, \qquad |x-y| \ge 2|x-x'|,
\end{displaymath}
and
\begin{displaymath}
|K(x,y) - K(x,y')| \le C\frac{|y-y'|^{\alpha}}{|x-y|^{m+\alpha}}, \qquad |x-y| \ge 2|y-y'|.
\end{displaymath}
Secondly, we demand that $T$ is a linear operator satisfying the identity
\begin{displaymath}
Tf(x) = \int_{\R^n} K(x,y)f(y)\,d\mu(y), \qquad x \not \in \textup{spt}\,f.
\end{displaymath}
In this paper we assume a priori that $T\colon L^2(\mu) \to L^2(\mu)$ boundedly. We are after a new quantitative bound for $\|T\|$, independent of the a priori bound. Such practice is standard, and one
can deduce to this situation by, for example, considering suitably truncated operators.

We are ready to state our main theorem -- a non-homogeneous local $Tb$ theorem with $L^2$ type testing conditions for all Calder\'on--Zygmund operators.
\begin{thm}\label{thm:main}
Suppose that $T\colon L^2(\mu) \to L^2(\mu)$ is a bounded Calder\'on--Zygmund operator with an adjoint operator $T^*$. We assume that to every cube $Q \subset \R^n$ there is associated two functions
$b^T_Q$ and $b^{T^*}_Q$ satisfying that
\begin{enumerate}
\item spt$\,b^T_Q \subset Q$ and spt$\,b^{T^*}_Q \subset Q$;
\item \begin{displaymath}
\Big| \int_Q b^T_Q\,d\mu \Big| \gtrsim \mu(Q) \qquad \textup{and} \qquad  \Big| \int_Q b^{T^*}_Q\,d\mu \Big| \gtrsim \mu(Q);
\end{displaymath}
\item $\| b^T_Q \|_{L^2(\mu)}^2 \lesssim \mu(Q)$ and $\| b^{T^*}_Q \|_{L^2(\mu)}^2 \lesssim \mu(Q)$;
\item $\|1_QTb^T_Q \|_{L^2(\mu)}^2 \lesssim \mu(Q)$ and $\|1_QT^*b^{T^*}_Q \|_{L^2(\mu)}^2 \lesssim \mu(Q)$.
\end{enumerate}
Then we have that $\|T\| \lesssim 1$.
\end{thm}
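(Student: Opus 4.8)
The plan is to run the non-homogeneous $Tb$ machinery of Nazarov--Treil--Volberg, but with the $b$-functions entering only in an $L^2$ (non-scale-invariant) fashion, as in the authors' earlier treatment of square functions. Introduce a random dyadic grid $\mathcal{D}$ with independent scale-by-scale shifts, and call $Q \in \mathcal{D}$ \emph{bad} if it lies unusually close to the boundary of some much larger cube of $\mathcal{D}$; the probability of badness is as small as we like. Expanding $\pair{Tf}{g}$ in $\mathcal{D}$ and averaging, one reduces, in the standard way, to estimating the ``good part'' of the bilinear form, the bad part being absorbed into a small multiple of the a priori norm $\|T\|$, which is then removed at the end. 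A priori boundedness is also used to justify all the martingale expansions.

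Next comes the corona decomposition, which is where the $L^2$ testing hypotheses make the argument delicate. Fix a cube $Q_0$ with test function $b := b^T_{Q_0}$, and select maximal stopping cubes $Q' \subsetneq Q_0$ at which either $\big| \ave{b}_{Q'} \big|$ is too small, or the local mass $\ave{|b|^2}_{Q'}$ is too large, or a density parameter such as $\mu(Q')/\ell(Q')^m$ has jumped by a fixed factor. Iterating gives a tree of coronas; in the corona rooted at a stopping cube one fixed test function governs the adapted martingale differences. The point is that the $L^2$ stopping condition $\ave{|b|^2}_{Q'} \gg 1$ replaces the $L^\infty$/BMO control available in earlier non-homogeneous work, so instead of a uniform bound one must track the accumulated masses; hypothesis (3) gives $\sum \mu(Q') \lesssim \mu(Q_0)$ over the stopping children, and this Carleson estimate, together with hypothesis (2), is what keeps the construction under control. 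The same is done for $g$ using the functions $b^{T^*}_R$.

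Within a corona with test function $b$, define $b$-adapted projections of the shape
\begin{displaymath}
\Delta^{b}_Q f = \Big( \sum_{Q' \in \children(Q)} \frac{\ave{f}_{Q'}}{\ave{b}_{Q'}} 1_{Q'} - \frac{\ave{f}_{Q}}{\ave{b}_{Q}} 1_{Q} \Big) b,
\end{displaymath}
restricted to the corona and patched at stopping cubes by the ordinary martingale difference. One must establish the quasi-orthogonality bound $\sum_Q \Norm{\Delta^b_Q f}{L^2(\mu)}^2 \lesssim \Norm{f}{L^2(\mu)}^2$ and, crucially, a \emph{twisted martingale transform} inequality permitting arbitrary bounded multipliers $\{\eps_Q\}$ in front of the $\Delta^b_Q$; for a general upper-regular $\mu$ and a possibly degenerate $b$ this is the subtle technical heart of the paper, proved by comparing $\Delta^b_Q$ corona-wise with the ordinary Haar differences and invoking the $L^2$ stopping data and Carleson embedding. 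With $f = \sum_Q \Delta^b_Q f$ and $g = \sum_R \Delta^{b^*}_R g$ in hand, expand $\pair{Tf}{g} = \sum_{Q,R} \pair{T\Delta^b_Q f}{\Delta^{b^*}_R g}$ and split by the relative sizes and positions of $Q$ and $R$: (i) separated pairs, handled by kernel size, H\"older, and a Schur-type summation exploiting goodness; (ii) deeply nested pairs $Q \subset R$ with $\ell(Q) \ll \ell(R)$, handled by kernel smoothness, the mean-zero property of $\Delta^b_Q$, and goodness of $Q$ inside $R$; (iii) comparable, nearby pairs -- the diagonal -- where the paraproduct terms $\ave{f}_Q \pair{1_Q T b}{\psi_Q}$ appear and are controlled by a Carleson embedding fed directly by the testing hypothesis (4), again in its $L^2$ (non-scale-invariant) form so that the stopping masses, not a uniform bound, are used.

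Summing the three classes of terms yields $|\pair{Tf}{g}| \lesssim \Norm{f}{L^2(\mu)} \Norm{g}{L^2(\mu)} + \eps \|T\| \Norm{f}{L^2(\mu)} \Norm{g}{L^2(\mu)}$, and absorbing the last term gives $\|T\| \lesssim 1$. The step I expect to be the main obstacle is the twisted martingale transform bound of the third paragraph, in tandem with the precise calibration of the $L^2$ stopping rule: making these work simultaneously for a general measure and for test functions that are only $L^2$-accumulating, while keeping every Carleson constant uniform, is exactly the ``delicate'' combination the abstract points to. The off-diagonal pieces (i)--(ii), by contrast, are essentially routine once the random grid and goodness are set up.
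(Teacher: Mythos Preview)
Your architecture is broadly right, and you correctly flag the twisted martingale transform as the crux. But two concrete points in your sketch would not go through as written.

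First, your proposed proof of the twisted transform bound---``compare $\Delta^b_Q$ corona-wise with the ordinary Haar differences and invoke the $L^2$ stopping data and Carleson embedding''---is precisely the argument the paper singles out as failing for non-doubling $\mu$: if $Q'$ is a stopping child of $Q$, one cannot deduce $\int_{Q'}|b_F|^2\,d\mu \lesssim \mu(Q')$ from $\int_{Q}|b_F|^2\,d\mu \lesssim \mu(Q)$ without doubling, so the naive comparison loses control exactly at the corona boundary. The paper's actual route is considerably heavier: it passes to \emph{maximal truncations} of ``half-twisted'' operators $D_Q$, proves weak-type $L^p$ bounds for every $p>1$ via a Sawyer-type testing reduction, and verifies the testing condition on the function $1$ by a non-homogeneous John--Nirenberg argument. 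Moreover, the inequality obtained (their Proposition~\ref{prop:twisted}) holds only for cubes with a \emph{fixed} stopping ancestor $Q^a=F$; summing over all stopping generations is not automatic. The paper therefore first truncates the martingale expansion to finitely many stopping generations, absorbing the tail into $\eps\|T\|$ via the geometric decay of stopping masses. Your sketch omits this reduction entirely and implicitly assumes a global twisted-transform bound you will not be able to prove.

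Second, several structural choices differ from the paper in ways that matter downstream. The paper opens by reducing, via the NTV non-homogeneous $T1$ theorem, to estimating $|\langle Tf,g\rangle|$ for $|f|,|g|\le 1_{Q_0}$; the pointwise bound $|f|\le 1$ is used repeatedly later. It employs \emph{two} independent random grids (one each for $f$ and $g$), inserts goodness only in the nested sum, and the paraproduct emerges there rather than in the diagonal. The stopping rule is not on large $\langle|b|^2\rangle$ or density jumps, but on small $|\langle b\rangle|$, large $\langle|M_\mu b|^2\rangle$, and large $\langle|Tb|^2\rangle$; the $Tb$- and $M_\mu b$-stopping are essential for the diagonal. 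That diagonal is far more involved than your sketch suggests: it requires two nested surgeries (a $\theta$-surgery and then a $\sigma$-surgery), a \emph{third} independent random grid, and direct appeal to the test functions $b^{T^*}_H$ on the small surgery cubes via hypothesis~(4).
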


Recently in \cite{LM:SF} we proved a version of this theorem for square functions defined in the upper half-space. While of independent interest because of the genuinely different context, it is a result with a much simpler proof than the current one.
Indeed, the square functions essentially provide a model framework where many technicalities of the Calder\'on--Zygmund world do not arise. One of them is that the diagonal is completely trivial for square functions while
extremely delicate for Calder\'on--Zygmund operators. Another difference is that the recent Whitney averaging identity over good cubes of Martikainen and Mourgoglou \cite{MM} makes certain probabilistic arguments easy even in the local $Tb$ situation.
A critical difference is the fact that the paraproduct operator is much simpler in the square function case.

Before going more to the history and context, we want to discuss the proof of our main theorem,  Theorem \ref{thm:main}, and the references most related to our techniques. The proof is
quite simply begun by reducing to a non-homogeneous $T1$ theorem
of Nazarov--Treil--Volberg \cite{NTV1}. More specifically, a local formulation following directly from this is used:
\begin{displaymath}
\|T\| \le C_1 + C_2\mathop{\sup_{Q_0 \subset \R^n}}_{Q_0 \textup{ cube}} \mathop{\sup_{f, g}}_{|f|, |g| \le 1_{Q_0}} \frac{|\langle Tf, g\rangle|}{\mu(\lambda Q_0)}.
\end{displaymath}
Here $\lambda > 1$ is some fixed large constant. This reduces things to proving that
\begin{displaymath}
|\langle Tf, g\rangle| \le (C_3 + c\|T||)\mu(\lambda Q_0),
\end{displaymath}
where $c$ can be taken to be arbitrarily small. Two independent random cubes $Q^*$ and $R^*$ for which $Q_0 \subset Q^* \subset \lambda Q_0$ and $Q_0 \subset R^* \subset \lambda Q_0$ are then used to expand the fixed bounded functions
$f$ and $g$ dyadically in to martingale differences adapted to the local test functions.

We now come to the essentials. To handle the complicated paraproducts we require a non-homogeneous version of the twisted martingale difference inequalities of Auscher--Routin
\cite{AR} or Lacey--V\"ah\"akangas \cite{LV-perfect}. This is Proposition \ref{prop:twisted} of our current paper
-- a result of independent interest. Indeed, the proof of Proposition \ref{prop:twisted} turns out to be a demanding task. The key reason lies in the fact that even if we have performed a stopping time argument which gives us that a fixed test function $b^T_F$ behaves nicely on a cube $Q$ i.e. $\int_Q |b^T_F|^2\, d\mu \lesssim \mu(Q)$, we cannot say much what happens in the stopping children of $Q$. That is, in a stopping child $Q'$ of $Q$ we cannot use the simple argument
\begin{displaymath}
\int_{Q'} |b^T_F|^2 \,d\mu \le \int_{Q} |b^T_F|^2 \,d\mu \lesssim \mu(Q) \lesssim \mu(Q')
\end{displaymath}
which would only available if $\mu$ would be doubling.

Instead, the proof of Proposition \ref{prop:twisted} becomes about controlling \emph{maximal truncations} of certain \emph{half-twisted} martingales $\sum_Q \epsilon_Q D_Q$. Even if we are interested in an $L^2$ result,
we find it convenient to prove a weak type bound for every $p \in (1,\infty)$ and interpolate this (the half-twisted martingales will be $L^p$ bounded for every $p$ unlike the original twisted martingales).
But such a weak type bound can be reduced to a testing condition -- an idea originally by Sawyer \cite{Sa}, 
but which can essentially also be found from e.g. \cite{seven} by Hyt\"onen et al. The verification of this testing inequality is based crucially on controlling $\sum_Q \epsilon_Q D_Q 1$ in $L^p$.
This control is proved by reducing to the case $p=1$ using a non-homogeneous John--Nirenberg principle formulated at least by Lacey--Petermichl--Reguera \cite{LPR} and Hyt\"onen--P\'erez--Treil--Volberg \cite{HCTV}.

Proposition \ref{prop:twisted} is formulated in such a way that essentially the stopping generation is fixed. For this reason we perform an argument which gives that in the expansion
of the pairing $\langle Tf , g\rangle$ we can use only finitely many generations of stopping cubes. This follows from the Carleson property of the stopping cubes by noticing that the large generations provide
only an absorbable error. 

After this, the pairing is split in to standard pieces: separated, nested, diagonal. The goodness is inserted only to the nested sum -- an idea already used by Hyt\"onen--Martikainen \cite{HM}. 
The point of adding the goodness like this is to guarantee the collapse of the paraproduct.
The crucial thing is that
the paraproduct arising from this sum can now be handled using an argument by Lacey--V\"ah\"akangas \cite{LV-direct}, the non-homogeneous
twisted martingale difference inequality proved before, and the reduction to finitely many generations of stopping cubes.
The last thing is to deal in this non-homogeneous setting with the extremely delicate surgery of the diagonal using only $L^2$ test functions.

We have given the technical foundation and references related to these latest techniques. But let us now discuss the history and overall context of the problem.
The first local $Tb$ theorem, with $L^{\infty}$ control of the test functions and their images, is by Christ \cite{Ch}. This was proven for doubling measures.
Nazarov, Treil and Volberg \cite{NTVa} obtained a non-homogeneous version of this theorem.

The idea of using (in the homogeneous situation) just local $L^p$ type testing conditions was introduced by Auscher, Hofmann, Muscalu, Tao and Thiele \cite{AHMTT}.
However, their proof works only for the so-called perfect dyadic singular integral operators.
The assumptions are of the form $\int_Q |b^1_Q|^p \le |Q|$, $\int_Q |b^2_Q|^q \le |Q|$, $\int_Q |Tb^1_Q|^{q'} \le |Q|$ and $\int_Q |T^*b^2_Q|^{p'} \le |Q|$, where $s'$ denotes the dual exponent of $s$ and $1 < p, q \le \infty$.
Our interest here is the case $p=q=2$  for all Calder\'on--Zygmund operators in the non-homogeneous setting. Even in the homogeneous setting extending the result of \cite{AHMTT} to
general Calder\'on--Zygmund operators is complicated. 

Hofmann \cite{Ho1} was able to extend to general Calder\'on--Zygmund operators but at the price of needing a stronger
set of assumptions: $\int_Q |b^1_Q|^s \le |Q|$, $\int_Q |b^2_Q|^s \le |Q|$, $\int_Q |Tb^1_Q|^{2} \le |Q|$ and $\int_Q |T^*b^2_Q|^{2} \le |Q|$ for some $s > 2$. Auscher and Yang \cite{AY}
established the theorem for standard Calder\'on--Zygmund operators in the case $1/p + 1/q \le 1$ (and thus in the case $p=q=2$).

We mention that there is also the question of considering the case $1/p + 1/q > 1$. While general exponents are not part of this paper, it has been an extremely active
area in the homogeneous world. Hofmann \cite{Ho2} has given a full solution in the case of square functions. In the Calder\'on--Zygmund world the work of Auscher and Routin \cite{AR}
continued to shed some light to the general case of exponents, however, not giving a definite answer and involving additional technical conditions. The (almost) full solution is given by Hyt\"onen and Nazarov \cite{HN}.

\section{Beginning of the proof}
Let $\lambda > 1$ be a fixed large constant. We begin by noting that by \cite{NTV1} there holds that
\begin{displaymath}
\|T\| \le C_1 + C_2\mathop{\sup_{Q_0 \subset \R^n}}_{Q_0 \textup{ cube}} \mathop{\sup_{f, g}}_{|f|, |g| \le 1_{Q_0}} \frac{|\langle Tf, g\rangle|}{\mu(\lambda Q_0)}.
\end{displaymath}
Let us fix a cube $Q_0 \subset \R^n$ and functions $f, g$ such that $|f|, |g| \le 1_{Q_0}$. It suffices to prove that
\begin{displaymath}
|\langle Tf, g\rangle| \le (C_3 + c\|T||)\mu(\lambda Q_0),
\end{displaymath}
where $c$ is so small that $C_2c \le 1/2$. 

For small notational convenience we assume that $c_{Q_0} = 0$ (that is, $Q_0$ is centred at the origin). 
Let $N \in \Z$ be defined by the requirement $2^{N-3} \le \ell(Q_0) < 2^{N-2}$.
Consider two independent random squares $Q^* = Q^*(w) = w + [-2^N,2^N)^n$ and $R^* = R^*(w') = w' + [-2^N, 2^N)^n$, where
$w, w' \in [-2^{N-1}, 2^{N-1})^n$. The cubes $Q^*$ and $R^*$ are taken to be the starting cubes of the independent grids $\mathcal{D}^T$ and $\mathcal{D}^{T^*}$ (only the cubes inside $Q^*$ and $R^*$ are included in these grids).
The probability measure is the normalized Lebesgue measure on the square $[-2^{N-1}, 2^{N-1})^n$. Furthermore, note that always spt$\,f$, spt$\,g \subset \alpha Q^* \cap \alpha R^*$ with some absolute constant $\alpha < 1$. 
There also holds that $Q^* \cup R^* \subset \lambda Q_0$ choosing $\lambda$ large enough.

\subsection{Martingale difference operators}
Let $M_{\mu}h(x) = \sup_{r > 0} \frac{1}{\mu(B(x,r))} \int_{B(x,r)} |h|\,d\mu$ be the centred maximal function. This is an $L^2(\mu)$ bounded operator.
Let us denote $\langle f \rangle_Q = \mu(Q)^{-1}\int_Q f\,d\mu$.
For a small convenience we may assume the normalisation $\langle b^T_Q \rangle_Q = 1 = \langle b^{T^*}_Q \rangle_Q$.
Let also $A$ denote the constant for which $\|b^T_Q\|_{L^2(\mu)}^2 + \|b^{T^*}_Q\|_{L^2(\mu)}^2 \le A\mu(Q)$ and $B$ the constant
for which $\|1_QTb^T_Q\|_{L^2(\mu)}^2 + \|1_QT^*b^{T^*}_Q\|_{L^2(\mu)}^2 \le B\mu(Q)$.

Let $\mathcal{F}_{Q^*}^1$ consist of the maximal cubes $Q \in \mathcal{D}^T$, $Q \subset Q^*$, for which at least one of the following three conditions holds:
\begin{enumerate}
\item $|\langle b^T_{Q^*} \rangle_Q| < 1/2$;
\item $\langle |M_{\mu}b^T_{Q*}|^2 \rangle_Q > 16A^2 \|M_{\mu}\|^2$; 
\item $\langle |Tb^T_{Q*}|^2 \rangle_Q > 16AB$.
\end{enumerate}
Next, one repeats the previous procedure by replacing $Q^*$ with a fixed $Q \in \mathcal{F}^1_{Q^*}$. The combined collection of stopping
cubes resulting from this is called $\mathcal{F}^2_{Q^*}$. This is continued and one sets $\mathcal{F}_{Q^*} = \bigcup_{j=0}^{\infty} \mathcal{F}^j_{Q^*}$.
Finally, for every $Q \in \mathcal{D}^T$, $Q \subset Q^*$, we let $Q^a \in \mathcal{F}_{Q^*}$ be the minimal
cube $S \in \mathcal{F}_{Q^*}$ for which $Q \subset S$.

Similarly, let $\mathcal{F}_{R^*}^1$ consist of the maximal cubes $R \in \mathcal{D}^{T^*}$, $R \subset R^*$, for which at least one of the following three conditions holds:
\begin{enumerate}
\item $|\langle b^{T^*}_{R^*} \rangle_R| < 1/2$;
\item $\langle |M_{\mu}b^{T^*}_{R*}|^2 \rangle_R > 16A^2 \|M_{\mu}\|^2$; 
\item $\langle |T^*b^{T^*}_{R*}|^2 \rangle_R > 16AB$.
\end{enumerate}
We define $\mathcal{F}_{R^*}$ and $R^a$ analogously as above.

The following results are proved essentially in \cite{LM:SF}.
\begin{lem}
For $F \in \mathcal{F}^j_{Q^*}$ there holds that
\begin{displaymath}
\mathop{\sum_{S \in \mathcal{F}^{j+1}_{Q^*}}}_{S \subset F} \mu(S) \le \Big(1 - \frac{1}{8A}\Big)\mu(F) =: \tau\mu(F), \qquad \tau < 1.
\end{displaymath}
\end{lem}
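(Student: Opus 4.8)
The plan is to split the stopping children of $F$ that lie in $\mathcal{F}^{j+1}_{Q^*}$ according to which of the three defining conditions is the first to apply to them, and to estimate the three resulting subfamilies separately; the subfamilies coming from conditions (2) and (3) are handled by a one-line Chebyshev argument, while condition (1) is where the actual work is. Write $\mathcal{S} = \mathcal{S}_1 \sqcup \mathcal{S}_2 \sqcup \mathcal{S}_3$, where $\mathcal{S}_i$ consists of those $S \in \mathcal{F}^{j+1}_{Q^*}$ with $S \subset F$ for which condition $(i)$ (read with $b^T_F$ in place of $b^T_{Q^*}$) is the first of the three to hold; by maximality the cubes of $\mathcal{S}$ are pairwise disjoint subcubes of $F$.

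For $\mathcal{S}_2$: every $S \in \mathcal{S}_2$ has $\langle |M_\mu b^T_F|^2 \rangle_S > 16 A^2 \|M_\mu\|^2$, so Chebyshev, disjointness of the cubes, the $L^2(\mu)$-boundedness of $M_\mu$, and $\|b^T_F\|_{L^2(\mu)}^2 \le A\mu(F)$ give
\[
\sum_{S \in \mathcal{S}_2} \mu(S) \le \frac{1}{16 A^2 \|M_\mu\|^2} \int_{\R^n} |M_\mu b^T_F|^2 \, d\mu \le \frac{\|b^T_F\|_{L^2(\mu)}^2}{16 A^2} \le \frac{\mu(F)}{16 A}.
\]
For $\mathcal{S}_3$: every $S \in \mathcal{S}_3$ has $\langle |T b^T_F|^2 \rangle_S > 16 A B$ and $S \subset F$, so, using the testing hypothesis $\|1_F T b^T_F\|_{L^2(\mu)}^2 \le B\mu(F)$,
\[
\sum_{S \in \mathcal{S}_3} \mu(S) \le \frac{1}{16 A B} \int_F |T b^T_F|^2 \, d\mu = \frac{\|1_F T b^T_F\|_{L^2(\mu)}^2}{16 A B} \le \frac{\mu(F)}{16 A}.
\]
Hence $\sum_{S \in \mathcal{S}_2 \cup \mathcal{S}_3} \mu(S) \le \mu(F)/(8A)$.

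For $\mathcal{S}_1$, I would exploit the normalisation $\langle b^T_F \rangle_F = 1$, i.e.\ $\int_F b^T_F \, d\mu = \mu(F)$. Put $G := F \setminus \bigcup_{S \in \mathcal{S}_1} S$; since the cubes of $\mathcal{S}_1$ are pairwise disjoint subsets of $F$, one has the exact decomposition $F = G \sqcup \bigcup_{S \in \mathcal{S}_1} S$ and therefore
\[
\mu(F) = \int_G b^T_F \, d\mu + \sum_{S \in \mathcal{S}_1} \int_S b^T_F \, d\mu .
\]
Each $S \in \mathcal{S}_1$ satisfies $\big| \int_S b^T_F \, d\mu \big| = |\langle b^T_F \rangle_S| \, \mu(S) < \tfrac12 \mu(S)$, and $\sum_{S \in \mathcal{S}_1} \mu(S) \le \mu(F)$ by disjointness, so the last sum is at most $\tfrac12 \mu(F)$ in modulus; consequently $\big| \int_G b^T_F \, d\mu \big| \ge \tfrac12 \mu(F)$. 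Cauchy--Schwarz together with $\|b^T_F\|_{L^2(\mu)}^2 \le A\mu(F)$ then yields $\tfrac12 \mu(F) \le \mu(G)^{1/2}\big(A\mu(F)\big)^{1/2}$, i.e.\ $\mu(G) \ge \mu(F)/(4A)$, and hence $\sum_{S \in \mathcal{S}_1} \mu(S) = \mu(F) - \mu(G) \le \big(1 - \tfrac{1}{4A}\big)\mu(F)$.

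Adding the three bounds gives $\sum_{S \in \mathcal{S}} \mu(S) \le \big(1 - \tfrac{1}{4A}\big)\mu(F) + \tfrac{1}{8A}\mu(F) = \big(1 - \tfrac{1}{8A}\big)\mu(F)$, which is the assertion; the statements for $\mathcal{F}_{R^*}$ follow verbatim with $T^*, b^{T^*}$ in place of $T, b^T$. I do not anticipate a genuine obstacle: the whole point is that this lemma uses only the $b^T_F$-\emph{average} on a stopping child (via condition (1)) and never an $L^2$ bound for $b^T_F$ there, so the non-doubling difficulty flagged in the introduction does not enter, and doubling of $\mu$ is used nowhere.
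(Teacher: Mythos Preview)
Your argument is correct: the Chebyshev estimates for $\mathcal{S}_2$ and $\mathcal{S}_3$ are clean, and the treatment of $\mathcal{S}_1$ via the normalisation $\int_F b^T_F\,d\mu=\mu(F)$, the reverse triangle inequality, and Cauchy--Schwarz is exactly right; the final arithmetic gives the stated constant $1-\tfrac{1}{8A}$ on the nose. The paper itself does not spell out a proof of this lemma---it defers to \cite{LM:SF}---but the argument you wrote is the standard one and is essentially what that reference contains.
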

\begin{cor}
We have the following Carleson estimate:
\begin{displaymath}
\mathop{\sum_{F \in \mathcal{F}_{Q^*}}}_{F \subset Q} \mu(F) \lesssim \mu(Q), \qquad Q \in \mathcal{D}^T, \, Q \subset Q^*.
\end{displaymath}
\end{cor}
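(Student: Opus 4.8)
The plan is to iterate the packing estimate of the preceding Lemma so as to produce geometric decay of $\mu$ across the stopping generations, and then to sum a geometric series. There is essentially no deep obstacle here; the only point requiring care is the bookkeeping of generations, i.e. remembering that the Lemma applies to each cube of each generation, not merely to $Q^*$.

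First I would record the tree structure of the stopping cubes. By construction, the cubes of $\mathcal{F}^{j+1}_{Q^*}$ contained in a fixed $F \in \mathcal{F}^j_{Q^*}$ are precisely the ``stopping children'' of $F$: they are pairwise disjoint, contained in $F$, and every cube of $\mathcal{F}_{Q^*}$ lying strictly below $F$ is contained in exactly one of them; in particular any two cubes of $\mathcal{F}_{Q^*}$ are either nested or disjoint. The Lemma says exactly that the stopping children of $F$ pack with constant $\tau = 1 - \tfrac{1}{8A} < 1$. I would then prove by induction on $k \ge 0$ that for every $j$ and every $F \in \mathcal{F}^j_{Q^*}$,
\[
\mathop{\sum_{S \in \mathcal{F}^{j+k}_{Q^*}}}_{S \subset F} \mu(S) \le \tau^k \mu(F).
\]
The case $k = 0$ is trivial, and for the inductive step one notes that the cubes of $\mathcal{F}^{j+k}_{Q^*}$ lying below $F$ are exactly the stopping children of the cubes $F' \in \mathcal{F}^{j+k-1}_{Q^*}$ with $F' \subset F$; applying the Lemma to each such $F'$ and then the induction hypothesis gives $\tau \sum_{F'} \mu(F') \le \tau \cdot \tau^{k-1}\mu(F)$. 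Summing over $k \ge 0$ yields, for every $F \in \mathcal{F}_{Q^*}$,
\[
\mathop{\sum_{S \in \mathcal{F}_{Q^*}}}_{S \subset F} \mu(S) \le \sum_{k \ge 0} \tau^k \mu(F) = \frac{\mu(F)}{1-\tau} = 8A\,\mu(F),
\]
a bound with an explicit constant which is valid even if infinitely many stopping cubes occur below $F$.

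Finally, for an arbitrary $Q \in \mathcal{D}^T$ with $Q \subset Q^*$, I would let $\mathcal{G}$ denote the collection of maximal elements of $\{F \in \mathcal{F}_{Q^*} : F \subset Q\}$. By maximality together with the nested-or-disjoint property, the cubes of $\mathcal{G}$ are pairwise disjoint, and their union is contained in $Q$, so $\sum_{F \in \mathcal{G}} \mu(F) \le \mu(Q)$. Since every $F \in \mathcal{F}_{Q^*}$ with $F \subset Q$ sits below a unique cube of $\mathcal{G}$, the previous display gives
\[
\mathop{\sum_{F \in \mathcal{F}_{Q^*}}}_{F \subset Q} \mu(F) = \sum_{F \in \mathcal{G}} \mathop{\sum_{S \in \mathcal{F}_{Q^*}}}_{S \subset F} \mu(S) \le 8A \sum_{F \in \mathcal{G}} \mu(F) \le 8A\,\mu(Q),
\]
which is the asserted Carleson estimate with implied constant $8A$.
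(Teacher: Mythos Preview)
Your proof is correct and is precisely the standard derivation of a Carleson packing estimate from the single-generation decay of the Lemma: iterate to obtain $\tau^k$ decay across $k$ generations, sum the geometric series, and then for a general $Q$ pass to the maximal stopping cubes it contains. The paper does not spell out a proof here but simply cites \cite{LM:SF}, where the same argument is carried out; your write-up matches that approach.
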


If $Q \in \mathcal{D}^T$, $Q \subset Q^*$, and $h \in L^2_{\textup{loc}}(\mu)$, we define the twisted martingale difference operators
\begin{equation*}
\Delta_Q h = \sum_{Q' \in \, \textrm{ch}(Q)} \Big[\frac{\langle h \rangle_{Q'}}{\langle b^T_{(Q')^a}\rangle_{Q'}}b^T_{(Q')^a} - \frac{\langle h \rangle_Q}{\langle b^T_{Q^a}\rangle_Q}b^T_{Q^a}\Big]1_{Q'}.
\end{equation*}
The operators $\Delta_R$, $R \in \mathcal{D}^{T^*}$, $R \subset R^*$, are analogously defined.

\subsection{General twisted martingale transform inequalities}
Most of the time we can make do with the following square function estimate.
\begin{lem}
Suppose $|h| \le 1$. Then there holds that
\begin{displaymath}
\sum_{Q \in \mathcal{D}^T} \|\Delta_Q h\|_{L^2(\mu)}^2 \lesssim \mu(Q^*).
\end{displaymath}
\end{lem}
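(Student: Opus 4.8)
The plan is to prove a pointwise square-function bound by telescoping the twisted martingale differences and exploiting the two structural gains we have available: the lower bound $|\langle b^T_{Q^a}\rangle_Q| \ge 1/2$ for every $Q$ not strictly inside a stopping cube (so the reciprocals appearing in $\Delta_Q$ are bounded by $2$), and the stopping conditions (2) and (3), which keep $\langle |M_\mu b^T_{Q^*}|^2\rangle_Q$ and hence $\langle |b^T_{Q^a}|^2\rangle_Q$ under control on every $Q$ of the relevant grid. First I would fix $F \in \mathcal{F}_{Q^*}$ and restrict attention to the cubes $Q$ with $Q^a = F$; on this range the stopping test function is literally $b^T_F$, and the operator reads
\begin{displaymath}
\Delta_Q h = \sum_{Q' \in \children(Q)} \Big[\frac{\langle h\rangle_{Q'}}{\langle b^T_{(Q')^a}\rangle_{Q'}}b^T_{(Q')^a} - \frac{\langle h\rangle_Q}{\langle b^T_F\rangle_Q}b^T_F\Big]1_{Q'}.
\end{displaymath}
The natural move is to add and subtract $\frac{\langle h\rangle_{Q'}}{\langle b^T_F\rangle_{Q'}}b^T_F 1_{Q'}$, splitting $\Delta_Q h$ into a ``genuine martingale difference of $h$ against the single function $b^T_F$'' part, whose square sum over $Q$ with $Q^a = F$ telescopes and is controlled by $\|1_F\, b^T_F\|_{L^2(\mu)}^2 \lesssim \mu(F)$ up to the boundedness of the reciprocals and the $L^2$ size assumption (3), times $\|h\|_\infty^2 \le 1$; and a ``boundary'' part supported on the stopping children $Q'$ with $(Q')^a \ne F$, i.e. $Q' \in \mathcal{F}(F)$, the next generation of stopping cubes below $F$.

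For the telescoping part I would argue exactly as in the classical non-twisted case: writing $\widetilde{\Delta}_Q h := \sum_{Q'}\big(\frac{\langle h\rangle_{Q'}}{\langle b^T_F\rangle_{Q'}} - \frac{\langle h\rangle_Q}{\langle b^T_F\rangle_Q}\big)b^T_F 1_{Q'}$, the functions $\widetilde{\Delta}_Q h$ are mutually orthogonal-ish after multiplying out, but cleanly one uses that $\sum_Q \widetilde{\Delta}_Q h$ telescopes pointwise and that each $\langle h\rangle_Q/\langle b^T_F\rangle_Q$ is bounded by $2$, so $|\widetilde{\Delta}_Q h|^2 \le 4|b^T_F|^2 1_{Q}$ with the overlap of the $Q$'s at each point being $O(1)$ at each scale; summing the $L^2$ norms and using a John–Nirenberg / Carleson-type bound (or simply the crude estimate $\sum_Q \|\widetilde{\Delta}_Q h\|_{L^2(\mu)}^2 \lesssim \|1_F b^T_F\|_{L^2(\mu)}^2 \lesssim \mu(F)$, which follows from the $L^2$-boundedness of the associated ``dual'' martingale transform with $|h|\le 1$) gives the $F$-local bound $\mu(F)$. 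The boundary part is bounded crudely: on each $Q'\in\mathcal{F}(F)$ we have $|\langle h\rangle_{Q'}| \le 1$ and $|\langle b^T_F\rangle_{Q'}|^{-1}|b^T_F|1_{Q'} = |b^T_F|1_{Q'}$ (no reciprocal gain, but also no need for one), so the contribution is $\lesssim \int_{Q'}|b^T_F|^2\,d\mu$ plus similar terms with $(Q')^a$, and summing over $Q'\in\mathcal{F}(F)$ uses $\langle |M_\mu b^T_F|^2\rangle_{Q'}$-type control together with $\mu(Q') \ge$ a multiple of the parent's measure only if $\mu$ is doubling — which it is not. This is precisely the subtle point flagged in the introduction.

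So the main obstacle is the boundary sum: I cannot bound $\int_{Q'}|b^T_F|^2\,d\mu$ by $\mu(Q')$ on stopping children. The remedy is to not estimate it cube-by-cube but to sum first over all $Q' \in \mathcal{F}(F)$: $\sum_{Q'\in\mathcal{F}(F)}\int_{Q'}|b^T_F|^2\,d\mu \le \int_F |b^T_F|^2\,d\mu \lesssim \mu(F)$ directly, since the $Q'$ are disjoint and contained in $F$. The same works for the $M_\mu$-weighted variant because $|b^T_F| \le M_\mu b^T_F$ $\mu$-a.e. on $F$ and $\sum_{Q'}\int_{Q'}|M_\mu b^T_F|^2\,d\mu \le \int_F |M_\mu b^T_F|^2\,d\mu \lesssim \|M_\mu\|^2\mu(F)$. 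Finally I sum the $F$-local bounds over $F\in\mathcal{F}_{Q^*}$ using the Carleson estimate of the Corollary: $\sum_{F\in\mathcal{F}_{Q^*}}\mu(F) \lesssim \mu(Q^*)$. Putting the telescoping part, the boundary part, and the Carleson summation together yields $\sum_{Q\in\mathcal{D}^T}\|\Delta_Q h\|_{L^2(\mu)}^2 \lesssim \mu(Q^*)$, as claimed.
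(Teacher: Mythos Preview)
Your overall strategy---fix $F\in\mathcal F_{Q^*}$, separate children according to whether $(Q')^a=F$ or $Q'\in\mathcal H$, bound the $F$-local contribution by $\mu(F)$, and sum over $F$ via the Carleson corollary---is exactly the route of the reference \cite{LM:SF}. Your handling of the boundary children is also correct in spirit: summing $\int_{Q'}|b^T_F|^2\,d\mu$ over the disjoint $Q'\in\mathcal H$ gives at most $\int_F|b^T_F|^2\,d\mu\lesssim\mu(F)$, and the $b^T_{Q'}$ piece contributes $\sum_{Q'\in\mathcal H}\mu(Q')\le\mu(F)$.

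There are, however, two genuine gaps. First, your ``add and subtract $\tfrac{\langle h\rangle_{Q'}}{\langle b^T_F\rangle_{Q'}}b^T_F 1_{Q'}$'' manufactures a term with $\langle b^T_F\rangle_{Q'}$ in the denominator on \emph{every} child, including the stopping children $Q'\in\mathcal H$, where this average can be arbitrarily small (that is precisely one of the reasons the cube stopped). The equality $|\langle b^T_F\rangle_{Q'}|^{-1}|b^T_F|1_{Q'} = |b^T_F|1_{Q'}$ is simply false. Do not add and subtract at all on $Q'\in\mathcal H$; just keep $\Delta_Qh\cdot 1_{Q'}=\langle h\rangle_{Q'}b^T_{Q'}1_{Q'}-\tfrac{\langle h\rangle_Q}{\langle b^T_F\rangle_Q}b^T_F1_{Q'}$ there and estimate both pieces directly as above.

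Second, and more seriously, your argument for the ``telescoping'' part collapses. The bound $|\widetilde\Delta_Qh|^2\lesssim |b^T_F|^2 1_Q$ is fine, but the cubes $Q$ with $Q^a=F$ containing a fixed point are \emph{not} $O(1)$ in number---there is one per scale between $F$ and the relevant stopping child---so summing this crude bound is useless. The functions $b^T_F\cdot D_Qh$ are not orthogonal either, and the appeal to an unspecified ``dual martingale transform'' does not supply the estimate. The missing step is this: for $Q'\notin\mathcal H$ the stopping condition~(2) (on $M_\mu b^T_F$) is \emph{not} violated, hence $\langle|b^T_F|^2\rangle_{Q'}\le\langle|M_\mu b^T_F|^2\rangle_{Q'}\lesssim 1$, i.e.\ $\int_{Q'}|b^T_F|^2\,d\mu\lesssim\mu(Q')$. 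Use this to replace the weight $|b^T_F|^2$ by $\mu$, and split
\[
\frac{\langle h\rangle_{Q'}}{\langle b^T_F\rangle_{Q'}}-\frac{\langle h\rangle_Q}{\langle b^T_F\rangle_Q}
\]
into a $[\langle h\rangle_{Q'}-\langle h\rangle_Q]$ part and a $[\langle b^T_F\rangle_{Q'}-\langle b^T_F\rangle_Q]$ part (the prefactors are bounded since $|h|\le 1$ and the reciprocals are $\le 2$). The $F$-local sum then reduces to $\sum_{Q\subset F}\|\Delta^c_Q(1_Fh)\|_{L^2(\mu)}^2+\sum_{Q\subset F}\|\Delta^c_Qb^T_F\|_{L^2(\mu)}^2\lesssim\mu(F)$, the ordinary $L^2$ martingale square-function bound. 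That is the step the proof in \cite{LM:SF} actually uses, and it is what your sketch is missing.
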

\begin{proof}
The proof is like that of Proposition 2.5 of \cite{LM:SF} -- except easier because of the assumption $|h| \le 1$.
\end{proof}

But for a few key times (in connection with paraproducts) we absolutely depend on the variant presented in the next proposition.
It is much stronger in many sense but requires that we set $Q^a = F$ for a fixed $F \in \mathcal{F}_{Q^*}$. It is not a triviality to then sum over all the generations of stopping cubes $F$.
However, this is an issue that we do not care about since our proof of the local $Tb$ theorem incorporates a reduction to finitely many stopping generations. So we could use the bound that follows in every situation.
But just to stress that this stronger estimate and the reduction to finitely many generations is really needed only when dealing with the paraproduct, we mostly use the above bound.

We shall use the following notation. If $F \in \mathcal{F}_{Q^*}$, we let $j \in \N$ be such that $F \in \mathcal{F}_{Q^*}^j$ and define $\mathcal{H} = \mathcal{H}_F = \{H \in \mathcal{F}_{Q^*}^{j+1}:\, H \subset F\}$.
\begin{prop}\label{prop:twisted}
Suppose $F \in \mathcal{F}_{Q^*}$ and $h \in L^2(\mu)$. Suppose also that we have constants $\epsilon_Q$, $Q \in \mathcal{D}^T$, which satisfy $|\epsilon_Q| \le 1$. Then there holds that
\begin{displaymath}
\Big\|\mathop{\sum_{Q \in \mathcal{D}^T}}_{Q^a = F} \epsilon_Q \Delta_Q h\Big\|_{L^2(\mu)}^2 \lesssim \|h\|_{L^2(\mu)}^2.
\end{displaymath}
\end{prop}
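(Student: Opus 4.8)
The plan is to follow the strategy outlined in the introduction: reduce the $L^2$ bound for the half-twisted martingale transform to a weak-type estimate via a Sawyer-type testing condition. First I would introduce the \emph{half-twisted} differences. Observe that $\Delta_Q h$ can be written as a sum of two pieces: a ``local'' part on each child $Q'$ built from $\langle h\rangle_{Q'}b^T_{(Q')^a}$, and a ``subtracting'' part $\langle h\rangle_Q \langle b^T_{Q^a}\rangle_Q^{-1} b^T_{Q^a} 1_{Q'}$. With the constraint $Q^a = F$ fixed, for cubes $Q$ strictly between $F$ and the stopping children in $\mathcal H$ the relevant test function is always $b^T_F$; only when a child $Q'$ lies in some $H\in\mathcal H$ does $(Q')^a$ change to $H$. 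So the sum $\sum_{Q^a=F}\epsilon_Q\Delta_Q h$ naturally splits into a genuinely half-twisted martingale $\sum_Q \epsilon_Q D_Q h$ (whose differences involve only $b^T_F$, hence are comparable to ordinary Haar-type differences since $b^T_F$ behaves nicely on $F$ by the stopping construction: $\langle|M_\mu b^T_F|^2\rangle_Q\lesssim 1$ and $|\langle b^T_F\rangle_Q|\ge 1/2$ for all $Q$ with $Q^a=F$) plus a ``boundary'' term collecting the contributions at the stopping children $H\in\mathcal H$, which after telescoping is of the form $\sum_{H\in\mathcal H}(\text{martingale average of }h\text{ at }H)\cdot b^T_{H}1_H$ minus a corresponding $b^T_F$ piece on each $H$.

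The second step is to handle the half-twisted part $\sum_Q\epsilon_Q D_Q h$. Here I would prove a weak-type $(p,p)$ bound for \emph{every} $p\in(1,\infty)$ for the maximal truncations $\sup_k|\sum_{\ell(Q)\ge 2^k}\epsilon_Q D_Q h|$, and then interpolate down to $p=2$ (the point of passing to half-twisted martingales is exactly that they, unlike the fully twisted ones, are $L^p$-bounded for all $p$). By the Sawyer-type characterisation of two-weight weak-type inequalities --- as attributed in the introduction to Sawyer \cite{Sa} and available in \cite{seven} --- this weak-type bound reduces to a single testing inequality: it suffices to control $\|\sum_{Q\subset Q_0}\epsilon_Q D_Q 1\|_{L^p(\mu)}\lesssim \mu(Q_0)^{1/p}$ uniformly over $Q_0$ with $Q_0^a=F$. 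To verify this testing inequality I would first treat the case $p=1$: there $\sum_Q\epsilon_Q D_Q 1$ is a sum of martingale differences of a bounded ($\approx b^T_F$-normalised) function and one estimates its $L^1$ norm directly against $\mu(Q_0)$ using the square-function/John--Nirenberg machinery. The general $p$ then follows from the non-homogeneous John--Nirenberg principle of \cite{LPR} and \cite{HCTV}, upgrading the $L^1$ control of $\sum_Q\epsilon_Q D_Q 1$ to $L^p$ control (BMO-type self-improvement). Combining, the half-twisted piece is $L^2(\mu)$-bounded with the desired constant.

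The third step is the boundary term $\sum_{H\in\mathcal H}c_H b^T_H 1_H + (\text{error on }\cup H)$. Since the cubes $H\in\mathcal H$ are pairwise disjoint, and each satisfies $\|b^T_H\|_{L^2(\mu)}^2\lesssim\mu(H)$ (assumption (3), with the $A$-normalisation), orthogonality gives $\|\sum_H c_H b^T_H 1_H\|_{L^2(\mu)}^2\lesssim\sum_H |c_H|^2\mu(H)$, and the coefficients $c_H$ are (twisted) martingale averages of $h$, so $\sum_H|c_H|^2\mu(H)\lesssim\|h\|_{L^2(\mu)}^2$ by the $L^2$ estimate for the martingale differences of $h$ localised to $F$ (Lemma before Proposition~\ref{prop:twisted}, or rather its $L^2(\mu)$-boundedness consequence). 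The subtracted $b^T_F$ part on $\cup_{H\in\mathcal H}H$ is handled the same way, using $\|1_{\cup H}b^T_F\|_{L^2(\mu)}\le\|1_F b^T_F\|_{L^2(\mu)}^2\lesssim\mu(F)$ together with the bound $\sum_H|c_H|^2\mu(H)\lesssim\|h\|^2$ applied to the single average $\langle h\rangle_F$-type coefficient weighted by $\mu(\cup H)\le\mu(F)$; more carefully one groups these into a paraproduct-type sum which is again controlled by a Carleson embedding using Corollary (the Carleson estimate for $\mathcal F_{Q^*}$) and the $L^2$ martingale bound for $h$.

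The main obstacle, as the authors themselves flag, is the testing step for the half-twisted martingale, specifically the passage from $L^1$ to $L^p$ control of $\sum_Q\epsilon_Q D_Q 1$: the difficulty is precisely that $\mu$ is non-doubling, so one cannot pass freely from a stopping parent $Q$ to a stopping child $Q'$ (the naive inequality $\int_{Q'}|b^T_F|^2\,d\mu\lesssim\mu(Q')$ fails), and one must instead run the argument through the non-homogeneous John--Nirenberg inequality, being careful that all the implied constants depend only on $A$, $B$, $\|M_\mu\|$ and the kernel constants, not on the a priori norm of $T$. A secondary technical point is tracking the $\epsilon_Q$'s through the telescoping so that the boundary term genuinely decouples from the half-twisted part --- this is where fixing $Q^a=F$ is essential, since it makes the relevant test function constant ($=b^T_F$) across the whole sum except at the bottom layer $\mathcal H$.
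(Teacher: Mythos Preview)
Your overall strategy coincides with the paper's: introduce half-twisted differences $D_Q h$ involving only $b^T_F$, control their maximal truncations in $L^p$ via a Sawyer-type testing reduction, verify the testing condition on $D_Q 1$ using the non-homogeneous John--Nirenberg principle, and handle a boundary piece at the stopping children $H\in\mathcal H$ separately. All the right ingredients are named.

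There is, however, a genuine gap in your step~3. After isolating the two ``bottom-layer'' pieces at $Q=H^{(1)}$ (namely $\epsilon_{H^{(1)}}\langle h\rangle_H b^T_H$ and $\epsilon_{H^{(1)}}\tfrac{\langle h\rangle_{H^{(1)}}}{\langle b^T_F\rangle_{H^{(1)}}}b^T_F 1_H$), what remains on each $H\in\mathcal H$ is
\[
\Big(\sum_{Q:\,H^{(2)}\subset Q\subset F}\epsilon_Q D_Q h\Big)\cdot b^T_F 1_H,
\]
because for those $Q$ one has $\Delta_Q h\cdot 1_H = D_Q h\cdot b^T_F 1_H$. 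This is not the simple ``subtracted $b^T_F$ part'' you describe, and it cannot be controlled by the global $L^2$ bound for $\sum_Q\epsilon_Q D_Q h$ alone, since $b^T_F 1_H$ is \emph{not} in $L^\infty$ (that is precisely the non-doubling obstruction you flag). The paper's point is that the scalar $\sum_{H^{(2)}\subset Q\subset F}\epsilon_Q D_Q h$ is constant on $H^{(1)}$ and pointwise dominated by the maximal truncation $\sup_{\epsilon>0}\bigl|\sum_{\ell(Q)>\epsilon}\epsilon_Q D_Q h\bigr|$ evaluated there; one then pairs the square of this average against the Carleson weight
\[
\alpha_Q:=\sum_{Q'\in\textup{ch}(Q)\cap\mathcal H}\int_{Q'}|b^T_F|^2\,d\mu,
\]
whose Carleson property has to be checked directly (it is \emph{not} the Carleson estimate for $\mathcal F_{Q^*}$ that you cite). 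So the maximal truncation bound is needed not for the half-twisted part on $F\setminus\bigcup\mathcal H$ (where $b^T_F\in L^\infty$ suffices) but exactly here, on the stopping children, combined with this specific Carleson sequence. Your proposal mentions both maximal truncations and Carleson embedding, but does not connect them at this step; fixing this is what makes the argument go through.
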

\begin{proof}
Consider a cube $Q \in \mathcal{D}^T$ for which $Q^a = F$.
We define
\begin{displaymath}
D_Q h := \sum_{Q' \in \textup{ch}(Q) \setminus \mathcal{H}}  \Big[\frac{\langle h \rangle_{Q'}}{\langle b^T_{F}\rangle_{Q'}}- \frac{\langle h \rangle_Q}{\langle b^T_{F}\rangle_Q}\Big]1_{Q'}.
\end{displaymath}
Our aim is to reduce to these operators. However, for technical reasons certain maximal truncations of them will be needed.
Anyway, begin by noticing that
\begin{align*}
\Big\|\mathop{\sum_{Q \in \mathcal{D}^T}}_{Q^a = F} \epsilon_Q \Delta_Q h \cdot 1_{F \setminus \bigcup \mathcal{H}} \Big\|_{L^2(\mu)}^2 
&= \Big\|\mathop{\sum_{Q \in \mathcal{D}^T}}_{Q^a = F} \epsilon_Q D_Q h \cdot b^T_F1_{F \setminus \bigcup \mathcal{H}} \Big\|_{L^2(\mu)}^2 \\
&\lesssim \Big\|\mathop{\sum_{Q \in \mathcal{D}^T}}_{Q^a = F} \epsilon_Q D_Q h \Big\|_{L^2(\mu)}^2,
\end{align*}
since $b^T_F1_{F \setminus \bigcup \mathcal{H}} \in L^{\infty}(\mu)$.

We are reduced to controlling
\begin{displaymath}
\sum_{H \in \mathcal{H}} \Big\|\mathop{\sum_{Q \in \mathcal{D}^T}}_{H^{(1)} \subset Q \subset F} \epsilon_Q \Delta_Q h \cdot 1_H\Big\|_{L^2(\mu)}^2.
\end{displaymath}
We write
\begin{align*}
\mathop{\sum_{Q \in \mathcal{D}^T}}_{H^{(1)} \subset Q \subset F} \epsilon_Q \Delta_Q h \cdot 1_H = \epsilon_{H^{(1)}} \Big( \langle h \rangle_H b^T_H& - \frac{\langle h \rangle_{H^{(1)}}}{\langle b^T_F \rangle_{H^{(1)}}} b^T_F 1_H\Big) \\
&+ \mathop{\sum_{Q \in \mathcal{D}^T}}_{H^{(2)} \subset Q \subset F} \epsilon_Q D_Q h \cdot b^T_F1_H. 
\end{align*}
We have that
\begin{displaymath}
\sum_{H \in \mathcal{H}} |\epsilon_{H^{(1)}}|^2 | \langle h \rangle_{H}|^2 \|b^T_{H} \|_{L^2(\mu)}^2 \lesssim  \sum_{H \in \mathcal{H}} \int_H |h|^2\,d\mu \le \|h\|_{L^2(\mu)}^2.
\end{displaymath}
Next, notice that
\begin{align*}
\sum_{H \in \mathcal{H}} |\epsilon_{H^{(1)}}|^2 \frac{|\langle h \rangle_{H^{(1)}}|^2}{|\langle b^T_F \rangle_{H^{(1)}}|^2} \int_H |b^T_F|^2\,d\mu \lesssim \sum_{H \in \mathcal{H}} &|\langle h \rangle_{H^{(1)}}|^2  \int_H |b^T_F|^2\,d\mu \\
&= \sum_{Q \in \mathcal{D}^T} |\langle h \rangle_Q|^2 \alpha_Q \lesssim \|h\|_{L^2(\mu)}^2,
\end{align*}
since
\begin{displaymath}
\alpha_Q := \left\{ \begin{array}{ll} 
\sum_{Q' \in \textup{ch}(Q) \cap \mathcal{H}} \int_{Q'} |b^T_F|^2\,d\mu, & \textup{ if } Q^a = F, \\
0, & \textup{ otherwise},
\end{array} \right.
\end{displaymath}
is a Carleson sequence. Let us show this fact now.

To this end, fix a cube $R \in \mathcal{D}^T$. We have that
\begin{displaymath}
\sum_{Q \subset R} \alpha_Q = \mathop{\sum_{Q \subset R}}_{Q^a = F} \sum_{Q' \in \textup{ch}(Q) \cap \mathcal{H}} \int_{Q'} |b^T_F|^2\,d\mu \le \int_R |b^T_F|^2\,d\mu.
\end{displaymath}
We can assume that $R \subset F$ and that there is a $Q \subset R$ for which $Q^a = F$. But then $R^a = F$ and $\int_R |b^T_F|^2\,d\mu \lesssim \mu(R)$ proving the
Carleson property.

So we are to deal with
\begin{displaymath}
\sum_{H \in \mathcal{H}} \Big\| \mathop{\sum_{Q \in \mathcal{D}^T}}_{H^{(2)} \subset Q \subset F} \epsilon_Q D_Q h \cdot b^T_F1_H\Big\|_{L^2(\mu)}^2.
\end{displaymath}
But notice that
\begin{displaymath}
\sum_{Q:\, H^{(2)} \subset Q \subset F} \epsilon_Q D_Q h
\end{displaymath}
is constant on $H^{(1)}$. We can then estimate
\begin{align*}
\sum_{H \in \mathcal{H}} & \Big|\Big\langle \sum_{Q:\, H^{(2)} \subset Q \subset F} \epsilon_Q D_Q h\Big\rangle_{H^{(1)}} \Big|^2 \int_H |b^T_F|^2\,d\mu \\
&= \sum_{H \in \mathcal{H}}   \Big|\Big\langle \mathop{\mathop{\sum_{Q \in \mathcal{D}^T}}_{Q^a = F}}_{\ell(Q) > \ell(H^{(1)})} \epsilon_Q D_Q h\Big\rangle_{H^{(1)}} \Big|^2 \int_H |b^T_F|^2\,d\mu \\
&\le \sum_{H \in \mathcal{H}}  \Big\langle \sup_{\epsilon > 0} \Big| \mathop{\mathop{\sum_{Q \in \mathcal{D}^T}}_{Q^a = F}}_{\ell(Q) > \epsilon} \epsilon_Q D_Q h \Big|  \Big\rangle_{H^{(1)}}^2  \int_H |b^T_F|^2\,d\mu \\
&= \sum_{R \in \mathcal{D}^T}  \Big\langle \sup_{\epsilon > 0} \Big| \mathop{\mathop{\sum_{Q \in \mathcal{D}^T}}_{Q^a = F}}_{\ell(Q) > \epsilon} \epsilon_Q D_Q h \Big|  \Big\rangle_{R}^2 \alpha_R
 \lesssim \Big\| \sup_{\epsilon > 0} \Big| \mathop{\mathop{\sum_{Q \in \mathcal{D}^T}}_{Q^a = F}}_{\ell(Q) > \epsilon} \epsilon_Q D_Q h \Big|\, \Big\|_{L^2(\mu)}^2.
\end{align*}
The conclusion is that the proposition follows from the $L^2(\mu)$ bound of these maximal truncations. But this bound follows from Proposition \ref{prop:lpmax}.
\end{proof}

To control the maximal truncations we need some heavier tools. For the reader's convenience we formulate and prove the needed principles exactly. The first is a reduction
to a testing condition (a dyadic adaptation of Sawyer's idea \cite{Sa} and also essentially contained in \cite{seven}). The second is a non-homogeneous John--Nirenberg principle (essentially found in \cite{LPR} and \cite{HCTV}).

Suppose that for every $Q \in \mathcal{D}^T$ we are given an operator $A_Q$ satisfying:
\begin{itemize}
\item $A_Q h = \sum_{Q' \in \textup{ch}(Q)} c_{Q'}(h) 1_{Q'}$ for some constants $c_{Q'}(h)$;
\item $|A_Qh| \le M_{\mu}^{\mathcal{D}^T}h$, where $M_{\mu}^{\mathcal{D}^T}h(x) := \sup_{R \in \mathcal{D}^T} 1_R(x) \mu(R)^{-1}\int_R |h|\,d\mu$.
\end{itemize}
We set
\begin{align*}
A_{\epsilon}h &:= \mathop{\sum_{Q \in \mathcal{D}^T}}_{\ell(Q) > \epsilon} A_Q h, \qquad \epsilon > 0, \\
A_{\#}h &:= \sup_{\epsilon > 0} |A_{\epsilon} h|.
\end{align*}
For $P \in \mathcal{D}^T$, let us define
\begin{align*}
A_{\epsilon}^{P}h &=   \mathop{\mathop{\sum_{Q \in \mathcal{D}^T}}_{Q \subset P}}_{\ell(Q) > \epsilon} A_Q h, \qquad \epsilon > 0, \\
A_{\#}^P h &:= \sup_{\epsilon > 0} |A_{\epsilon}^P h|.
\end{align*}

The following lemma states that a certain testing condition for maximal truncations implies an estimate $L^p(\mu) \to L^{p, \infty}(\mu)$ for the maximal truncations.
\begin{lem}\label{lem:test}
Let $p \in (1,\infty)$. We assume that for every $Q \in \mathcal{D}^T$ and $h \in L^p(\mu)$ there holds that
\begin{equation}\label{eq:maxtest}
\int_Q A_{\#}^Q h\,d\mu \lesssim \|h1_Q\|_{L^p(\mu)} \mu(Q)^{1/p'}.
\end{equation}
Then for every $h \in L^p(\mu)$ there holds that $\|A_{\#}h\|_{L^{p, \infty}(\mu)} \lesssim \|h\|_{L^p(\mu)}$.
\end{lem}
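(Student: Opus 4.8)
The plan is to prove Lemma \ref{lem:test} by a Calder\'on--Zygmund-type decomposition of $h$ at height $\lambda$, combined with the fact that the testing condition \eqref{eq:maxtest} is precisely what is needed to control the ``local part'' of the maximal truncation, while the ``good part'' is handled by the $L^p$ boundedness of the dyadic maximal function $M_\mu^{\mathcal{D}^T}$ together with the pointwise bound $|A_Q h| \le M_\mu^{\mathcal{D}^T} h$. Fix $h \in L^p(\mu)$ and $\lambda > 0$. First I would apply the dyadic CZ decomposition: let $\{P_j\} \subset \mathcal{D}^T$ be the maximal cubes with $\langle |h| \rangle_{P_j} > \lambda$, set $\Omega = \bigcup_j P_j$, write $h = g + \sum_j b_j$ where $b_j = (h - \langle h \rangle_{P_j})1_{P_j}$ and $g = h 1_{\Omega^c} + \sum_j \langle h\rangle_{P_j} 1_{P_j}$, and recall the standard estimates $\|g\|_{L^\infty(\mu)} \lesssim \lambda$ (using the dyadic structure, $\langle |h|\rangle_{P_j} \lesssim \lambda$), $\|g\|_{L^p(\mu)} \lesssim \|h\|_{L^p(\mu)}$, $\sum_j \mu(P_j) \lesssim \lambda^{-p}\|h\|_{L^p(\mu)}^p$, and $\sum_j \|b_j\|_{L^1(\mu)} \lesssim \|h\|_{L^p(\mu)}^p \lambda^{1-p}$ (the latter since $\|b_j\|_{L^1(\mu)} \le 2\langle|h|\rangle_{P_j}\mu(P_j)$ and one sums using $\mu(P_j) \le \lambda^{-p}\int_{P_j}|h|^p\,d\mu$).

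Next I would split $A_{\#}h \le A_{\#}g + A_{\#}(\sum_j b_j)$ and estimate the two pieces against the set where each exceeds a comparable multiple of $\lambda$. For the good part, since $|A_Q g| \le M_\mu^{\mathcal{D}^T} g$ pointwise, one gets $A_{\#}g \le M_\mu^{\mathcal{D}^T}g$, and by the $L^p(\mu)$-boundedness of the dyadic maximal function, $\mu(\{A_{\#}g > \lambda\}) \le \mu(\{M_\mu^{\mathcal{D}^T}g > \lambda\}) \lesssim \lambda^{-p}\|g\|_{L^p(\mu)}^p \lesssim \lambda^{-p}\|h\|_{L^p(\mu)}^p$, which is exactly the desired weak-type bound. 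For the bad part, the key observation is that since each $b_j$ is supported in $P_j$ and has mean zero over $P_j$, and since $A_Q b_j = \sum_{Q'\in\mathrm{ch}(Q)} c_{Q'}(b_j)1_{Q'}$ is a sum over dyadic children, the operator $A_Q b_j$ vanishes identically unless $Q \subset P_j$ (indeed $A_Q b_j$ is built from averages $\langle b_j\rangle_{Q'}$ for $Q'\in\mathrm{ch}(Q)$; if $Q \not\subset P_j$ then $Q' \supset P_j$ or $Q'\cap P_j = \emptyset$, and using the mean-zero property one checks the contribution telescopes away). Hence $A_\epsilon(\sum_j b_j) = \sum_j A_\epsilon^{P_j} b_j$, so outside $\Omega$ we have $A_{\#}(\sum_j b_j) = 0$ a.e., and $\mu(\{A_{\#}(\sum_j b_j) > 0\} \setminus \Omega) = 0$. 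Therefore
\begin{displaymath}
\mu\Big(\Big\{A_{\#}\Big(\sum_j b_j\Big) > \lambda\Big\}\Big) \le \mu(\Omega) + \frac{1}{\lambda}\int_{\Omega^c} A_{\#}\Big(\sum_j b_j\Big)\,d\mu = \mu(\Omega),
\end{displaymath}
wait --- more carefully, since the bad part is supported (after applying $A_{\#}$) in $\Omega$ only after discarding a null set, we directly get $\mu(\{A_{\#}(\sum_j b_j) > \lambda\}) \le \mu(\Omega) \lesssim \lambda^{-p}\|h\|_{L^p(\mu)}^p$. In fact we do not even need to invoke the testing hypothesis for this crude bound; but the cleaner route, which genuinely uses \eqref{eq:maxtest}, is to write $\int_{P_j} A_{\#}^{P_j} b_j\,d\mu \lesssim \|b_j 1_{P_j}\|_{L^p(\mu)}\mu(P_j)^{1/p'}$ and sum, obtaining $\sum_j \int A_{\#}^{P_j}b_j\,d\mu \lesssim \sum_j \|b_j\|_{L^p(\mu)}\mu(P_j)^{1/p'}$, then by Chebyshev $\mu(\{A_{\#}(\sum_j b_j) > \lambda\} \cap \Omega^c) = 0$ and on $\Omega$ use $\mu(\Omega) \lesssim \lambda^{-p}\|h\|^p$. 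Combining the good and bad estimates and using $\mu(\{A_{\#}h > 2\lambda\}) \le \mu(\{A_{\#}g > \lambda\}) + \mu(\{A_{\#}(\sum b_j) > \lambda\})$ yields $\|A_{\#}h\|_{L^{p,\infty}(\mu)} \lesssim \|h\|_{L^p(\mu)}$ after taking the supremum over $\lambda$.

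The main obstacle, and the only place where the testing condition \eqref{eq:maxtest} is genuinely consumed rather than circumvented, is establishing that $A_Q b_j$ is supported in $P_j$ (equivalently that the truncated sum $A_\epsilon$ applied to the bad function localizes to $\Omega$). Here one must be careful about the truncation parameter $\epsilon$: the operators $A_Q$ with $\ell(Q) > \ell(P_j)$ and $Q \supsetneq P_j$ could a priori contribute, and one uses that for such $Q$ either a child $Q'$ of $Q$ satisfies $Q' \supset P_j$ (in which case $\langle b_j \rangle_{Q'} = \mu(Q')^{-1}\int_{P_j} b_j \,d\mu = 0$ by the mean-zero property) or $Q' \cap P_j = \emptyset$ (contributing $0$ since $b_j$ is supported in $P_j$); thus $A_Q b_j = 0$. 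A subtlety is that the hypothesis gives a bound on $A_{\#}^Q h$ for arbitrary $h$, so strictly we may prefer to apply it with $h = b_j$ and $Q = P_j$ directly, which is legitimate since $b_j \in L^p(\mu)$; one then needs $A_{\#}^{P_j} b_j = A_{\#}(b_j)$ off a null set, which follows from the localization just described. Everything else --- the CZ decomposition estimates, the $L^p$ bound for $M_\mu^{\mathcal{D}^T}$, and the final interpolation-free assembly of the weak-type bound --- is routine.
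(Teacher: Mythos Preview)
Your argument has a fatal gap in the treatment of the good part $g$. You claim that since $|A_Q g| \le M_\mu^{\mathcal{D}^T} g$ for each individual $Q$, it follows that $A_{\#}g \le M_\mu^{\mathcal{D}^T} g$. This inference is incorrect: $A_{\#}g = \sup_{\epsilon>0}\bigl|\sum_{\ell(Q)>\epsilon} A_Q g\bigr|$ is a supremum of \emph{sums} of the operators $A_Q$, not of individual terms. At any point $x$, the cubes $Q \ni x$ with $\ell(Q)>\epsilon$ form a chain of unbounded length, and nothing prevents these contributions from accumulating. For a concrete counterexample take $A_Q h := \langle h\rangle_Q 1_Q$, which satisfies both structural hypotheses; then $A_\epsilon h(x) = \sum_{Q \ni x,\, \ell(Q)>\epsilon} \langle h\rangle_Q$, which is not controlled by $M_\mu^{\mathcal{D}^T}h(x)$. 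Notice that, as you yourself observe, your argument never genuinely consumes the testing condition \eqref{eq:maxtest}; this is a warning sign, since without that hypothesis the conclusion of the lemma is simply false.

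There is also a secondary issue with the bad part: you assert that $A_Q b_j = 0$ whenever $Q \not\subset P_j$ because ``$A_Q b_j$ is built from averages $\langle b_j\rangle_{Q'}$.'' But the lemma only assumes $A_Q h = \sum_{Q'\in\textup{ch}(Q)} c_{Q'}(h)1_{Q'}$ with $|c_{Q'}(h)| \le M_\mu^{\mathcal{D}^T}h$ on $Q'$; it does \emph{not} say the constants $c_{Q'}(h)$ are functions of dyadic averages of $h$, nor even that $A_Q$ is linear. In the intended application $A_Q = \epsilon_Q D_Q$ this extra structure does hold, but not at the level of generality of the lemma. The paper's proof proceeds instead by a good-$\lambda$ inequality: one compares $\{A_{\#}h > 4\lambda,\, M_\mu^{\mathcal{D}^T}h \le \lambda\}$ to $\{A_{\#}h > \lambda\}$ inside each maximal dyadic cube $Q$ of the latter set, uses the pointwise bound on individual $A_R$ together with the nesting structure to show $A_{\#}h \le A_{\#}^Q h + 3\lambda$ there, invokes \eqref{eq:maxtest} to control $\int_Q A_{\#}^Q h\,d\mu$, and closes by an absorption argument relying on an a priori qualitative finiteness assumption.
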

\begin{proof}
We assume qualitatively that $A_Q \ne 0$ for only finitely many $Q \in \mathcal{D}^T$. This gives us the a priori information $\|A_{\#}h\|_{L^{p, \infty}(\mu)} < \infty$ for $h \in L^p(\mu)$.

Let $\lambda > 0$ and set $\Omega_{\lambda} := \{A_{\#}h > \lambda\}$. Let $M_{\lambda}$ consist of the maximal $Q \in \mathcal{D}^T$ for which $Q \subset \Omega_{\lambda}$. It is clear that
$\Omega_{\lambda} = \bigcup_{Q \in M_{\lambda}} Q$, since for every $x \in \Omega_{\lambda}$ there is a $Q \in \mathcal{D}^T$ such that $x \in Q \subset \Omega_{\lambda}$.

Let us set
\begin{displaymath}
H_{\lambda}(Q) := Q \cap \{A_{\#}h > 4\lambda, \, M_{\mu}^{\mathcal{D}^T}h \le \lambda\}, \qquad Q \in M_{\lambda}.
\end{displaymath}
We have that
\begin{displaymath}
\Omega_{4\lambda} \subset \bigcup_{Q \in M_{\lambda}} H_{\lambda}(Q) \cup \{M_{\mu}^{\mathcal{D}^T}h > \lambda\}
\end{displaymath}
so that we have for any $\eta > 0$ that
\begin{align*}
\lambda^p \mu(\Omega_{4\lambda}) &\le \lambda^p \sum_{Q \in M_{\lambda}} \mu(H_{\lambda}(Q)) + \|M_{\mu}^{\mathcal{D}^T}h\|_{L^{p, \infty}(\mu)}^p \\
&\le \mathop{\lambda^p \sum_{Q \in M_{\lambda}}}_{\mu(H_{\lambda}(Q)) > \eta\mu(Q)} \mu(H_{\lambda}(Q)) + \eta \lambda^p \mu(\Omega_{\lambda}) + C\|h\|_{L^p(\mu)}^p.
\end{align*}

Let $Q \in M_{\lambda}$ and $x \in Q$. Let $z \in Q^{(1)}$ be such that $A_{\#}h(z) \le \lambda$. Let $\epsilon > 0$ be arbitrary. If $\ell(Q) > \epsilon$ we have
the following identity
\begin{align*}
A_{\epsilon}h(x) = A^Q_{\epsilon}h(x) +  \mathop{\sum_{R \in \mathcal{D}^T}}_{\ell(Q) < \ell(R) \le 4\ell(Q)} A_R h(x) +
\mathop{\sum_{R \in \mathcal{D}^T}}_{4\ell(Q) < \ell(R)} A_R h(z).
\end{align*}
We get the bound that
\begin{displaymath}
|A_{\epsilon}h(x)| \le A_{\#}^Q h(x) + 2M_{\mu}^{\mathcal{D}^T}h(x) + A_{\#}h(z).
\end{displaymath}
It is clear that this bound holds for every $\epsilon > 0$. Therefore, we have that
\begin{displaymath}
4\lambda < A_{\#}h(x) \le A_{\#}^Q h(x) + 3\lambda, \qquad Q \in M_{\lambda}, \, x \in H_{\lambda}(Q).
\end{displaymath}
This yields that if $Q \in M_{\lambda}$ and  $\mu(H_{\lambda}(Q)) > \eta\mu(Q)$, then there holds that
\begin{displaymath}
\int_Q A_{\#}^Q h \,d\mu \ge \int_{H_{\lambda}(Q)} A_{\#}^Q h \,d\mu \ge \lambda \mu(H_{\lambda}(Q)) \ge \eta\lambda \mu(Q).
\end{displaymath}
From this we can conclude that
\begin{displaymath}
\Big(\frac{1}{\mu(Q)} \int_Q A_{\#}^Q h \,d\mu\Big)^p \ge \eta^p\lambda^p, \qquad Q \in M_{\lambda}, \, \mu(H_{\lambda}(Q)) > \eta\mu(Q).
\end{displaymath}

We can now see using the assumed testing condition that
\begin{align*}
\mathop{\lambda^p \sum_{Q \in M_{\lambda}}}_{\mu(H_{\lambda}(Q)) > \eta\mu(Q)} \mu(H_{\lambda}(Q)) \le C\eta^{-p} \sum_{Q \in M_{\lambda}} \|h1_Q\|_{L^p(\mu)}^p \le C\eta^{-p} \|h\|_{L^p(\mu)}^p.
\end{align*}
We have shown that
\begin{displaymath}
4^{-p}(4\lambda)^p\mu(\Omega_{4\lambda}) \le C\eta^{-p} \|h\|_{L^p(\mu)}^p + \eta \lambda^p \mu(\Omega_{\lambda}) + C\|h\|_{L^p(\mu)}^p.
\end{displaymath}
This yields that
\begin{displaymath}
4^{-p} \|A_{\#}h\|_{L^{p,\infty}(\mu)}^p \le C\eta^{-p} \|h\|_{L^p(\mu)}^p + \eta  \|A_{\#}h\|_{L^{p,\infty}(\mu)}^p + C\|h\|_{L^p(\mu)}^p.
\end{displaymath}
Taking $\eta = 4^{-p}/2$ and using the fact that $\|A_{\#}h\|_{L^{p, \infty}(\mu)} < \infty$ we get the claim.
\end{proof}


The following two lemmata capture our usage of the non-homogeneous John--Nirenberg principle.
\begin{lem}\label{lem:p1}
Suppose that for every $P \in \mathcal{D}^T$ there holds that
\begin{displaymath}
\|A_{\#}^P 1\|_{L^1(\mu)} \lesssim \mu(P).
\end{displaymath}
Then for every $p \in (1,\infty)$ and for every $P \in \mathcal{D}^T$ there holds that
\begin{displaymath}
\|A_{\#}^P 1\|_{L^p(\mu)} \lesssim \mu(P)^{1/p}.
\end{displaymath}
\end{lem}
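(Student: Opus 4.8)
The plan is to run a non-homogeneous John--Nirenberg argument: from the hypothesized $L^1(\mu)$ bound (which in particular makes $A_\#^P 1$ finite $\mu$-a.e.) I will extract \emph{exponential} decay of the distribution function of $A_\#^P 1$, and every $L^p(\mu)$ bound then follows by integrating in $\lambda$. The engine is a pointwise comparison of $A_\#^P 1$ across scales. For dyadic cubes $R \subseteq P$ in $\mathcal{D}^T$ and a truncation parameter $\epsilon$, split the sum defining $A_\epsilon^P 1$ into the cubes contained in $R$ — this part is exactly $A_\epsilon^R 1$, hence dominated by $A_\#^R 1$ — and the cubes strictly between $R$ and $P$. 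The latter part involves only cubes of side $\ge \ell(R)$, so for $\epsilon \ge \ell(R)$ it is constant on $R$ and equals a legitimate truncation of $A_\#^P 1$ read off at any point of the parent $R^{(1)}$; combined with $|A_Q 1| \le M_\mu^{\mathcal{D}^T} 1 = 1$ for the single intermediate cube of size $\ell(R^{(1)})$, this yields
\[
A_\#^P 1(x) \ \le\ A_\#^R 1(x) + A_\#^P 1(z) + 1 \qquad \text{for all } x \in R, \ z \in R^{(1)}, \ R \subseteq P.
\]

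Next I set up the stopping cubes. Let $C_0$ be such that $\mu(Q)^{-1}\int_Q A_\#^Q 1\,d\mu \le C_0$ for every $Q \in \mathcal{D}^T$ (the hypothesis), and put $\Lambda = 2C_0$. Since $A_\epsilon^Q 1$ depends on $\epsilon$ only through the list of dyadic generations it sums, it is a simple function on the grid, so the level set $\{x \in Q : A_\#^Q 1(x) > \Lambda\}$ is a union of proper dyadic subcubes of $Q$; by Chebyshev its $\mu$-measure is $\le \mu(Q)/2$. Let $\mathcal{G}(Q)$ be its maximal cubes, so $\sum_{Q' \in \mathcal{G}(Q)}\mu(Q') \le \mu(Q)/2$, and iterate: $\mathcal{G}_0 = \{P\}$, $\mathcal{G}_{k+1} = \bigcup_{Q \in \mathcal{G}_k}\mathcal{G}(Q)$. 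Then $\sum_{Q \in \mathcal{G}_k}\mu(Q) \le 2^{-k}\mu(P)$, so for $\mu$-a.e.\ $x \in P$ the integer $k(x) := \max\{k : x \in \bigcup\mathcal{G}_k\}$ is finite.

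Now iterate the displayed inequality along the tower $P = P_0 \supsetneq P_1 \supsetneq \cdots \supsetneq P_{k(x)}$ of stopping cubes through $x$: applying it with $R = P_{j+1} \in \mathcal{G}(P_j)$ and choosing (by maximality, and since the stopping set occupies at most half of $P_j$) a point $z \in P_{j+1}^{(1)}$ with $A_\#^{P_j}1(z) \le \Lambda$ gives $A_\#^{P_j}1(x) \le A_\#^{P_{j+1}}1(x) + \Lambda + 1$; since the tower stops at $P_{k(x)}$ we also have $A_\#^{P_{k(x)}}1(x) \le \Lambda$. Chaining, $A_\#^P 1(x) \le (k(x)+1)(\Lambda+1)$ for a.e.\ $x \in P$, whence
\[
\mu\big(\{x \in P : A_\#^P 1(x) > \lambda\}\big) \ \le\ \mu\Big(\bigcup\mathcal{G}_{\lfloor \lambda/(\Lambda+1)\rfloor}\Big) \ \le\ 2\cdot 2^{-\lambda/(\Lambda+1)}\,\mu(P), \qquad \lambda > 0.
\]
Integrating $p\lambda^{p-1}$ against this distributional bound gives $\|A_\#^P 1\|_{L^p(\mu)}^p \le C_p\,\mu(P)$ with $C_p$ depending only on $p$ and $C_0$, which is the claim.

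The step I expect to be the main obstacle is the pointwise comparison across scales — turning the heuristic ``the contribution of the scales above a fixed cube $R$ is, up to an $O(1)$ error, constant on $R$ and controlled by $A_\#^P 1$ at a point of $R^{(1)}$'' into the clean inequality above. This requires careful tracking of exactly which dyadic generations enter each truncation $A_\epsilon^P 1$, and a separate look at the degenerate case $R^{(1)} = P$ (where the ``above'' part of the sum is empty). The same grid-simplicity observation underlies the measurability of $A_\#^Q 1$ and the dyadic-cube structure of the level sets used in the stopping construction. Everything else — the Chebyshev estimate, the geometric packing $\sum_{\mathcal{G}_k}\mu \le 2^{-k}\mu(P)$, the choice of a good point in $R^{(1)}$, and the final integration — is routine.
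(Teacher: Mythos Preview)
Your proof is correct and follows essentially the same non-homogeneous John--Nirenberg strategy as the paper: the paper deduces Lemma~\ref{lem:p1} from the subsequent abstract lemma (applied with $\varphi_Q = A_Q 1/C$), whose proof is precisely a stopping-time iteration yielding exponential decay of the distribution function, just as you do directly. The only cosmetic difference is the stopping criterion---the paper stops on the size of the \emph{upper} partial sum $\big|\sum_{R\subsetneq Q\subset P}\varphi_Q\big|$, while you stop on the local maximal function $A_\#^{P_j}1$ and link scales via your pointwise comparison $A_\#^{P}1(x)\le A_\#^{R}1(x)+A_\#^{P}1(z)+1$---but both routes exploit the same two inputs ($|A_Q1|\le 1$ and the $L^1$ hypothesis via Chebyshev) and arrive at the same exponential tail bound.
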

\begin{proof}
Follows from the next lemma by taking $\varphi_Q = A_Q 1 / C$ for a large enough constant $C > 1$.
\end{proof}
\begin{lem}
Assume that for every $Q \in \mathcal{D}^T$ we are given a function $\varphi_Q$ such that
\begin{itemize}
\item $\varphi_Q = \sum_{Q' \in \textup{ch}(Q)} c_{Q'} 1_{Q'}$ for some constants $c_{Q'}$;
\item $\|\varphi_Q\|_{L^{\infty}(\mu)} \le 1$.
\end{itemize}
For every $P \in \mathcal{D}^T$ we set
\begin{displaymath}
\Phi_P := \sup_{\epsilon > 0} \Big| \mathop{\mathop{\sum_{Q \in \mathcal{D}^T}}_{Q \subset P}}_{\ell(Q) > \epsilon} \varphi_Q \Big|.
\end{displaymath}
Suppose that for every $P \in \mathcal{D}^T$ there holds that
\begin{displaymath}
\mu(\{x \in P: \, \Phi_P(x) > 1\}) \le \mu(P)/2.
\end{displaymath}
Then for every $P \in \mathcal{D}^T$ and $t > 1$ there holds that
\begin{displaymath}
\mu(\{x \in P: \Phi_P(x) > t\}) \le 2^{-(t-1)/2} \mu(P).
\end{displaymath}
\end{lem}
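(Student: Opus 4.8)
\emph{Strategy.} I would prove this by a Calder\'on--Zygmund stopping time on $P$, run exactly as in the classical proof of the John--Nirenberg inequality and then iterated; the hypothesis plays the role of ``the level set where the oscillation exceeds $1$ has measure at most half''. The first move is a pointwise description of $\Phi_P$. Fix $x\in P$. Since $\varphi_R(x)=0$ unless $x\in R$, for a given $\epsilon$ the sum defining $\Phi_P$ runs over an initial segment of the dyadic chain of cubes of $\mathcal{D}^T$ that contain $x$ and lie in $P$, so, letting $\epsilon\to0$,
\[
\Phi_P(x)=\sup_{Q\ni x,\ Q\subseteq P}\ \Big|\mathop{\sum_{R\in\mathcal{D}^T}}_{Q\subseteq R\subseteq P}\varphi_R(x)\Big|.
\]
The structural observation I would exploit is that for every dyadic $Q\subseteq P$ the \emph{tail} $a_Q:=\sum_{R:\,Q\subsetneq R\subseteq P}\varphi_R$ is \emph{constant on $Q$}, because each $\varphi_R$ with $R\supsetneq Q$ is constant on the child of $R$ that contains $Q$. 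Thus every partial sum above is of the form $a_Q(x)+\varphi_Q(x)$ with $|\varphi_Q|\le1$.

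\emph{Stopping cubes.} Let $\mathcal{S}=\mathcal{S}(P)$ be the collection of maximal dyadic cubes $S\subsetneq P$ with $|a_S|>1$. For $x\in S$, choosing $\epsilon$ slightly below $\ell(S^{(1)})$ (the parent) realizes $a_S$ as one of the partial sums of $\Phi_P(x)$, so $S\subseteq\{x\in P:\Phi_P(x)>1\}$; since the cubes of $\mathcal{S}$ are pairwise disjoint, the hypothesis yields $\sum_{S\in\mathcal{S}}\mu(S)\le\mu(P)/2$. By maximality, every cube $Q$ with $S\subsetneq Q\subseteq P$ has $|a_Q|\le1$ (otherwise $Q$ would lie inside a cube of $\mathcal{S}(P)$ strictly containing $S$), so $|a_S|\le|a_{S^{(1)}}|+1\le2$; and if $x\notin\bigcup\mathcal{S}$ then $|a_Q|\le1$ for every $Q\ni x$ in $P$, whence $\Phi_P(x)\le2$ off $\bigcup\mathcal{S}$. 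Finally, for $x\in S$ a partial sum of $\Phi_P(x)$ whose bottom cube $Q$ lies in $S$ equals $\sum_{Q\subseteq R\subseteq S}\varphi_R(x)+a_S$, which is bounded by $\Phi_S(x)+2$, while a partial sum whose bottom cube strictly contains $S$ is at most $2$; hence
\[
\Phi_P(x)\le\Phi_S(x)+2\qquad(x\in S,\ S\in\mathcal{S}(P)).
\]

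\emph{Iteration.} Put $F(t):=\sup_{P\in\mathcal{D}^T}\mu(\{x\in P:\Phi_P(x)>t\})/\mu(P)$; then $0\le F\le1$, $F$ is non-increasing, and $F(1)\le1/2$ by hypothesis. For $t>2$, the previous display gives $\{x\in P:\Phi_P(x)>t\}\subseteq\bigcup_{S\in\mathcal{S}(P)}\{x\in S:\Phi_S(x)>t-2\}$, so summing over $S$ and using $\sum_S\mu(S)\le\mu(P)/2$ yields $F(t)\le\tfrac12F(t-2)$. Iterating from $F(1)\le\tfrac12$ gives $F(2j-1)\le2^{-j}$ for all $j\ge1$. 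For $1<t\le3$ one has $F(t)\le F(1)\le\tfrac12\le2^{-(t-1)/2}$, and for $t>3$ one picks $j\ge2$ with $t\le2j+1$ (so $j\ge(t-1)/2$) and gets $F(t)\le F(2j-1)\le2^{-j}\le2^{-(t-1)/2}$, which is the assertion.

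\emph{Main obstacle.} The delicate part is entirely in the setup: recognizing $\Phi_P$ as a maximal function of dyadic partial sums, isolating the fact that the tail $a_Q$ is constant on $Q$ (so that a Calder\'on--Zygmund selection performed on $|a_Q|$ is meaningful), and bookkeeping the additive constants carefully enough --- keeping them equal to $2$, not larger --- so that the recursion produces the sharp exponential rate $2^{-(t-1)/2}$ rather than a weaker one. After that, the stopping-time/iteration step is the routine John--Nirenberg argument, and one may as usual assume qualitatively that only finitely many $\varphi_Q$ are nonzero so that all maximal cubes in sight genuinely exist.
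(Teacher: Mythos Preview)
Your proof is correct and follows essentially the same John--Nirenberg stopping-time argument as the paper: both select maximal cubes where the constant tail $a_S=\sum_{S\subsetneq R\subseteq P}\varphi_R$ first exceeds $1$, use $|a_S|\le 2$ and the hypothesis to get half the measure, and iterate. The only cosmetic differences are that the paper builds the nested collections $\mathcal{R}_j$ explicitly for a fixed $P_0$ while you encode the iteration through $F(t):=\sup_P \mu(\{\Phi_P>t\})/\mu(P)$, and in your last line you should pick the \emph{smallest} $j\ge 2$ with $t\le 2j+1$ (so that also $2j-1\le t$, which is what makes $F(t)\le F(2j-1)$ valid).
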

\begin{proof}
Fix $P_0 \in \mathcal{D}^T$. Let $\mathcal{R}_1$ consist of the maximal $R \in \mathcal{D}^T$ such that $R \subset P_0$ and
\begin{displaymath}
\Big| \sum_{Q: \, R \subsetneq Q \subset P_0} \varphi_Q(x)\Big| > 1, \qquad x \in R.
\end{displaymath}
The left-hand side is constant on $R$ so this makes sense. Define $S_1 := \bigcup_{R \in \mathcal{R}_1} R$. We have that:
\begin{itemize}
\item $\mu(S_1) \le \mu(\{x \in P_0: \, \Phi_{P_0}(x) > 1\}) \le \mu(P_0)/2$;
\item $\Phi_{P_0}1_{P_0 \setminus S_1} \le 1$;
\item
For $R \in \mathcal{R}_1$ and $x \in R$ we have that
\begin{displaymath}
\Big| \sum_{Q: \, R \subsetneq Q \subset P_0} \varphi_Q(x)\Big| \le 2.
\end{displaymath}
\end{itemize}
For $R_0 \in \mathcal{R}_1$ we let $\mathcal{R}_2^{R_0}$ consist of the maximal $R \in \mathcal{D}^T$ such that $R \subset R_0$ and
\begin{displaymath}
\Big| \sum_{Q: \, R \subsetneq Q \subset R_0} \varphi_Q(x)\Big| > 1, \qquad x \in R.
\end{displaymath}
Let $\mathcal{R}_2 := \bigcup_{R_0 \in \mathcal{R}_1} \mathcal{R}_2^{R_0}$ and $S_2 := \bigcup_{R \in \mathcal{R}_2} R$. We have that:
\begin{itemize}
\item $\mu(S_2) \le \mu(P_0)/4$;
\item $\Phi_{P_0}1_{P_0 \setminus S_2} \le 3$;
\item For $R \in \mathcal{R}_2$ and $x \in R$ we have that
\begin{displaymath}
\Big| \sum_{Q: \, R \subsetneq Q \subset P_0} \varphi_Q(x)\Big| \le 4.
\end{displaymath}
\end{itemize}

Continue like this. We establish collections $\mathcal{R}_j$ and sets $S_j = \bigcup_{R \in \mathcal{R}_j} R$ such that there holds
$\mu(S_j) \le 2^{-j}\mu(P_0)$ and $\Phi_{P_0}1_{P_0 \setminus S_j} \le 2j-1$. Let $t > 1$ and choose $j_t \in \N$ such that
$2j_t - 1 \le t < 2j_t + 1$. We have that
\begin{displaymath}
\mu(\{x \in P_0: \Phi_{P_0}(x) > t\}) \le \mu(S_{j_t}) \le 2^{-j_t}\mu(P_0) \le 2^{-(t-1)/2}\mu(P_0).
\end{displaymath}
\end{proof}

An important tool for us is the following standard maximal truncation estimate for martingale differences.
\begin{lem}
Suppose we have constants $\epsilon_Q$, $Q \in \mathcal{D}^T$, which satisfy $|\epsilon_Q| \le 1$. Let $p \in (1, \infty)$. We have for every $h \in L^p(\mu)$ the estimate
\begin{displaymath}
\Big\| \sup_{\epsilon > 0 } \Big| \mathop{\sum_{Q \in \mathcal{D}^T}}_{\ell(Q) > \epsilon} \epsilon_Q \sum_{Q' \in \textup{ch}(Q)} [\langle h \rangle_{Q'} - \langle h \rangle_{Q}]1_{Q'} \Big|\, \Big\|_{L^p(\mu)}^p \lesssim \|h\|_{L^p(\mu)}^p.
\end{displaymath}
\end{lem}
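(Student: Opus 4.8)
The plan is to recognize the truncated sums as a martingale in their own right and to combine Burkholder's martingale transform inequality with Doob's maximal inequality, so that the full range $p\in(1,\infty)$ comes out directly with no interpolation needed.

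First I would dispose of trivialities. The whole left-hand side is supported in $Q^*$ and depends only on $h1_{Q^*}$, and cubes of $\mu$-measure zero contribute nothing and may be discarded throughout; so we may assume $h\in L^p(\mu)$ with $\supp h\subset Q^*$. Index the grid by generations: write $\mathcal{D}^T_k$ for the cubes of $\mathcal{D}^T$ of the $k$-th generation (so $\mathcal{D}^T_0=\{Q^*\}$ and $\ell(Q)=2^{N+1-k}$ for $Q\in\mathcal{D}^T_k$), let $\mathcal{F}_k$ be the $\sigma$-algebra generated by $\mathcal{D}^T_k$, and let $\Exp_k h=\sum_{Q\in\mathcal{D}^T_k}\langle h\rangle_Q 1_Q$ be the corresponding conditional expectation, where $\langle h\rangle_Q=\mu(Q)^{-1}\int_Q h\,d\mu$. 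Writing $d_Q h:=\sum_{Q'\in\textup{ch}(Q)}[\langle h\rangle_{Q'}-\langle h\rangle_Q]1_{Q'}$, the quantity $\ell(Q)$ takes only the discrete values $2^{N+1-k}$, so
\[
\sup_{\epsilon>0}\Big|\mathop{\sum_{Q\in\mathcal{D}^T}}_{\ell(Q)>\epsilon}\epsilon_Q d_Q h\Big|=\sup_{K\in\No}|G_K|,\qquad G_K:=\sum_{k=0}^{K-1}\ \sum_{Q\in\mathcal{D}^T_k}\epsilon_Q d_Q h .
\]

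Next I would check the martingale structure. Since $\sum_{Q\in\mathcal{D}^T_K}d_Q h=\Exp_{K+1}h-\Exp_K h$ and each $\epsilon_Q$ is constant on $Q$, the increment is $G_{K+1}-G_K=\varphi_K\,(\Exp_{K+1}h-\Exp_K h)$, where $\varphi_K:=\sum_{Q\in\mathcal{D}^T_K}\epsilon_Q 1_Q$ is $\mathcal{F}_K$-measurable with $\|\varphi_K\|_{L^\infty(\mu)}\le 1$. Because $\Exp[\Exp_{K+1}h-\Exp_K h\mid\mathcal{F}_K]=0$ and $\varphi_K$ is $\mathcal{F}_K$-measurable, $(G_K)_{K\ge 0}$ is an $(\mathcal{F}_K)$-martingale; it is exactly the martingale transform of $h$ by the \emph{predictable} multiplier sequence $(\varphi_K)$, which is bounded by $1$. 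Hence Burkholder's martingale transform theorem (equivalently: Burkholder--Davis--Gundy together with the pointwise domination of square functions $S(G_K)\le S(h)$, using that the $d_Q h$ have orthogonal martingale differences) gives a constant $C_p$, independent of $K$ and of the $\epsilon_Q$, with $\|G_K\|_{L^p(\mu)}\le C_p\|h\|_{L^p(\mu)}$ for all $K$.

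Finally, since $(G_K)$ is itself an $L^p(\mu)$-bounded martingale, $|G_K|$ is a nonnegative submartingale and Doob's maximal inequality yields $\big\|\sup_K|G_K|\big\|_{L^p(\mu)}\le p'\sup_K\|G_K\|_{L^p(\mu)}\le p'C_p\|h\|_{L^p(\mu)}$, which is the claimed bound. The argument is essentially bookkeeping; the only points that require a little care are the reduction of the continuous truncation parameter $\epsilon$ to the discrete generation index, and the observation that $(G_K)$ is a genuine martingale transform with a predictable (not merely adapted) multiplier, which is what makes Burkholder's constant independent of the sequence $(\epsilon_Q)$. I expect no serious obstacle here; this lemma is the one soft ingredient feeding into the harder Lemma~\ref{lem:test} and the John--Nirenberg estimates.
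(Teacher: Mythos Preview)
Your argument is correct: recognising $(G_K)$ as the martingale transform of $(\Exp_K h)$ by the predictable bounded sequence $(\varphi_K)$, invoking Burkholder for the uniform $L^p$ bound, and then Doob's inequality for the maximal function is exactly the standard route. The paper itself does not supply a proof of this lemma---it simply cites it as ``the standard maximal truncation estimate for martingale differences''---so your write-up is precisely the kind of justification the paper omits as well known.
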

We need a version of this where we have removed the stopping children.
\begin{cor}\label{cor:maxmar}
Suppose $F \in \mathcal{F}_{Q^*}$. Suppose also that we have constants $\epsilon_Q$, $Q \in \mathcal{D}^T$, which satisfy $|\epsilon_Q| \le 1$.
Let $p \in (1, \infty)$. We have for every $h \in L^p(\mu)$ the estimate
\begin{displaymath}
\Big\| \sup_{\epsilon > 0 } \Big| \mathop{\mathop{\sum_{Q \in \mathcal{D}^T}}_{Q^a = F}}_{\ell(Q) > \epsilon} 
\epsilon_Q \sum_{Q' \in \textup{ch}(Q) \setminus \mathcal{H}} [\langle h \rangle_{Q'} - \langle h \rangle_{Q}]1_{Q'} \Big|\, \Big\|_{L^p(\mu)}^p \lesssim \|h\|_{L^p(\mu)}^p.
\end{displaymath}
\end{cor}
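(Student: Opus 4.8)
The plan is to deduce this from the standard maximal truncation estimate for (untwisted) martingale differences stated just above, applied to a \emph{stopped} version of $h$. Recall that $\mathcal{H}=\mathcal{H}_F$ is a pairwise disjoint family of cubes contained in $F$ (being maximal cubes of a dyadic grid), and set
\[
g:=h\,1_{F\setminus\bigcup\mathcal{H}}+\sum_{H\in\mathcal{H}}\langle h\rangle_H1_H .
\]
Then $g$ is supported in $F$, is constant on each $H\in\mathcal{H}$, and satisfies $\|g\|_{L^2(\mu)}\lesssim\|h\|_{L^2(\mu)}$ — more generally $\|g\|_{L^p(\mu)}\lesssim\|h\|_{L^p(\mu)}$ by H\"older's inequality, since $|\langle h\rangle_H|^p\le\langle|h|^p\rangle_H$. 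The two structural facts I would record are: (i) every $Q\in\mathcal{D}^T$ with $Q\subset F$ satisfies either $Q^a=F$ or $Q\subset H$ for some $H\in\mathcal{H}$, because any stopping cube strictly inside $F$ lies inside some cube of $\mathcal{H}$; (ii) consequently $\sum_{Q'\in\textup{ch}(Q)}[\langle g\rangle_{Q'}-\langle g\rangle_Q]1_{Q'}=0$ for $Q\subset F$ with $Q^a\neq F$ (as $g$ is then constant on $Q$), while a short averaging argument gives $\langle g\rangle_Q=\langle h\rangle_Q$ whenever $Q^a=F$, and also $\langle g\rangle_{Q'}=\langle h\rangle_{Q'}$ for every child $Q'$ of such a $Q$ (for $Q'\notin\mathcal{H}$ because then $(Q')^a=F$, and for $Q'\in\mathcal{H}$ because $g\equiv\langle h\rangle_{Q'}$ on $Q'$).

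Next I would apply the quoted lemma to $g$ with the coefficients $\widetilde\epsilon_Q:=\epsilon_Q1_{\{Q\subset F\}}$, which still satisfy $|\widetilde\epsilon_Q|\le1$; this gives
\[
\Big\|\sup_{\epsilon>0}\Big|\mathop{\sum_{Q\subset F}}_{\ell(Q)>\epsilon}\epsilon_Q\sum_{Q'\in\textup{ch}(Q)}[\langle g\rangle_{Q'}-\langle g\rangle_Q]1_{Q'}\Big|\Big\|_{L^p(\mu)}^p\lesssim\|g\|_{L^p(\mu)}^p\lesssim\|h\|_{L^p(\mu)}^p .
\]
By (i)--(ii), on the left only the cubes with $Q^a=F$ contribute, and for such $Q$, splitting $\textup{ch}(Q)$ into $\textup{ch}(Q)\setminus\mathcal{H}$ and $\textup{ch}(Q)\cap\mathcal{H}$,
\[
\sum_{Q'\in\textup{ch}(Q)}[\langle g\rangle_{Q'}-\langle g\rangle_Q]1_{Q'}
=\sum_{Q'\in\textup{ch}(Q)\setminus\mathcal{H}}[\langle h\rangle_{Q'}-\langle h\rangle_Q]1_{Q'}
+\sum_{Q'\in\textup{ch}(Q)\cap\mathcal{H}}[\langle h\rangle_{Q'}-\langle h\rangle_Q]1_{Q'}.
\]
Summing over $Q$ with $Q^a=F$ and $\ell(Q)>\epsilon$, the first term reproduces exactly the expression whose maximal truncation we must bound, while the second term produces the correction
\[
E_\epsilon:=\mathop{\sum_{H\in\mathcal{H}}}_{\ell(H^{(1)})>\epsilon}\epsilon_{H^{(1)}}\big[\langle h\rangle_H-\langle h\rangle_{H^{(1)}}\big]1_H ,
\]
since each $H\in\mathcal{H}$ occurs precisely as the child $Q'=H$ of $Q=H^{(1)}$, and $(H^{(1)})^a=F$. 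Hence the quantity we must control is the difference of the bracketed expression in the last display and $E_\epsilon$, so by the triangle inequality inside $\sup_{\epsilon>0}$ it remains only to bound $\big\|\sup_{\epsilon>0}|E_\epsilon|\big\|_{L^p(\mu)}$.

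For this I would invoke the disjointness of $\mathcal{H}$: for each $x$ there is at most one $H\in\mathcal{H}$ containing $x$, so $\sup_{\epsilon>0}|E_\epsilon(x)|\le|\langle h\rangle_H|+|\langle h\rangle_{H^{(1)}}|\le2M_\mu^{\mathcal{D}^T}h(x)$ for that $H$ (and $E_\epsilon(x)=0$ otherwise). Since $M_\mu^{\mathcal{D}^T}$ is bounded on $L^p(\mu)$, $\big\|\sup_{\epsilon>0}|E_\epsilon|\big\|_{L^p(\mu)}\lesssim\|h\|_{L^p(\mu)}$, and combining this with the estimate for $g$ and raising to the $p$-th power gives the claim. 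I do not expect a genuine obstacle here: the only point needing care is the bookkeeping in (i)--(ii) — identifying which children are dropped and observing that the dropped ``$\textup{ch}(Q)\cap\mathcal{H}$'' terms collapse into a maximal-function--size error exactly because $\mathcal{H}$ is a disjoint family.
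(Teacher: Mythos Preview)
Your proof is correct and follows essentially the same route as the paper: both arguments split the children into $\textup{ch}(Q)\setminus\mathcal{H}$ and $\textup{ch}(Q)\cap\mathcal{H}$, control the latter pointwise by $2\sum_{H\in\mathcal{H}}1_H M_\mu^{\mathcal{D}^T}h$ using the disjointness of $\mathcal{H}$, and appeal to the preceding lemma for the full sum. Your detour through the stopped function $g$ is correct but unnecessary---since the $\epsilon_Q$ in the lemma are arbitrary with $|\epsilon_Q|\le 1$, the paper simply applies it to $h$ directly with $\epsilon_Q$ set to zero for $Q^a\ne F$, which immediately yields the bound for the full-children sum over $\{Q:Q^a=F\}$.
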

\begin{proof}
Notice that
\begin{align*}
\sup_{\epsilon > 0} \Big|  \mathop{\mathop{\sum_{Q \in \mathcal{D}^T}}_{Q^a = F}}_{\ell(Q) > \epsilon} 
\epsilon_Q \sum_{Q' \in \textup{ch}(Q) \cap \mathcal{H}} [\langle h \rangle_{Q'} - \langle h \rangle_{Q}]1_{Q'}(x) \Big|
&\le \sum_{H \in \mathcal{H}} |\langle h \rangle_H - \langle h \rangle_{H^{(1)}}|1_H(x) \\
&\le 2 \sum_{H \in \mathcal{H}} 1_H(x)M^{\mathcal{D}^T}_{\mu}h(x).
\end{align*}
But then we have that
\begin{align*}
\Big\| \sup_{\epsilon > 0 } \Big| \mathop{\mathop{\sum_{Q \in \mathcal{D}^T}}_{Q^a = F}}_{\ell(Q) > \epsilon} 
\epsilon_Q \sum_{Q' \in \textup{ch}(Q) \cap \mathcal{H}} [\langle h \rangle_{Q'} - \langle h \rangle_{Q}]1_{Q'} \Big|\, \Big\|_{L^p(\mu)}^p &\lesssim  \sum_{H \in \mathcal{H}} \|1_H M^{\mathcal{D}^T}_{\mu}h\|_{L^p(\mu)}^p \\
&\le \|M^{\mathcal{D}^T}_{\mu}h\|_{L^p(\mu)}^p \lesssim  \|h\|_{L^p(\mu)}^p.
\end{align*}
Combining this with the previous lemma we have the result.
\end{proof}

The proof of Proposition \ref{prop:lpmax} will be based on a reduction to the testing condition (Lemma \ref{lem:test}). However, to verify the testing condition we still require the following lemma. It is in the proof
of this final lemma that the John--Nirenberg type reductions from above are used.
\begin{lem}\label{lem:D1}
Suppose $F \in \mathcal{F}_{Q^*}$. Suppose also that we have constants $\epsilon_Q$, $Q \in \mathcal{D}^T$, which satisfy $|\epsilon_Q| \le 1$.
For every $p \in [1,\infty)$ and $P \in \mathcal{D}^T$ there holds that
\begin{displaymath}
\Big\| \sup_{\epsilon > 0} \Big| \mathop{\mathop{\sum_{Q \in \mathcal{D}^T}}_{Q^a = F, \, Q \subset P}}_{\ell(Q) > \epsilon} \epsilon_Q D_Q 1 \Big|\, \Big\|_{L^p(\mu)}^p \lesssim \mu(P).
\end{displaymath}
\end{lem}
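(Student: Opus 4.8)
The plan is to reduce to the case $p=1$ and then invoke the John--Nirenberg machinery (Lemma \ref{lem:p1}), after recognizing the maximal truncation of $\sum_Q \epsilon_Q D_Q 1$ as an instance of the abstract operator $A_\#$. First I would fix $P \in \mathcal{D}^T$ and set, for $Q \in \mathcal{D}^T$,
\begin{displaymath}
A_Q := \epsilon_Q D_Q \quad \textup{if } Q^a = F, \qquad A_Q := 0 \quad \textup{otherwise}.
\end{displaymath}
Each $A_Q h$ is of the form $\sum_{Q' \in \textup{ch}(Q)} c_{Q'}(h) 1_{Q'}$ by the very definition of $D_Q$ (with $c_{Q'}(h) = 0$ when $Q' \in \mathcal{H}$), so the first structural hypothesis preceding Lemma \ref{lem:test} is met. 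The pointwise bound $|A_Q h| \le M_\mu^{\mathcal{D}^T} h$ is immediate: for $Q' \in \textup{ch}(Q) \setminus \mathcal{H}$ both $Q'$ and $Q$ are descendants of $F$ with $Q^a = F$, so $\langle b^T_F \rangle_{Q'}$ and $\langle b^T_F \rangle_Q$ are both close to $1$ (they are not in the stopping region, so condition (1) in the definition of $\mathcal{F}^1$ fails), whence $|c_{Q'}(h)| \lesssim |\langle h\rangle_{Q'}| + |\langle h \rangle_Q| \lesssim M_\mu^{\mathcal{D}^T} h$ on $Q'$; in fact one wants the clean bound $|A_Q h| \le M_\mu^{\mathcal{D}^T} h$ up to absorbing constants into the $\epsilon_Q$, which is harmless since the $\epsilon_Q$ only enter through $|\epsilon_Q| \le 1$.

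Next, for the special function $h = 1$ I would show $\|A_\#^P 1\|_{L^1(\mu)} \lesssim \mu(P)$ directly. Observe that $D_Q 1$ records exactly the oscillation of the function $x \mapsto 1/\langle b^T_F \rangle_{Q(x)}$ between a cube and its non-stopping children, where $Q(x)$ is the cube of a given generation containing $x$. So $\sum_{Q^a = F,\, Q \subset P,\, \ell(Q) > \epsilon} \epsilon_Q D_Q 1$ is a martingale-transform of the telescoping function $\phi := 1/\langle b^T_F \rangle_{\cdot} - \textup{(boundary terms)}$, and the maximal truncation is controlled by the maximal function of $\phi$ plus the boundary contributions coming from cubes meeting $\bigcup \mathcal{H}$. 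The function $1/\langle b^T_F \rangle_Q$ is bounded (by $2$, on the relevant cubes), so a crude bound gives $A_\#^P 1 \lesssim 1 + \sum_{H \in \mathcal{H},\, H \subset P} 1_H \cdot (\textup{bounded})$, and integrating over $P$ yields $\lesssim \mu(P) + \sum_{H \subset P} \mu(H) \lesssim \mu(P)$ by the Carleson estimate (Corollary after Lemma on stopping cubes). To make this rigorous one isolates, just as in the proof of Proposition \ref{prop:twisted} and Corollary \ref{cor:maxmar}, the contribution of children in $\mathcal{H}$ (handled by the Carleson sum over $\mathcal{H}$) from the contribution of children outside $\mathcal{H}$ (handled by boundedness of $1/\langle b^T_F\rangle_\cdot$ and a standard telescoping/maximal-truncation estimate as in the ``standard maximal truncation estimate for martingale differences'' lemma above).

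With the $p=1$ estimate in hand, Lemma \ref{lem:p1} applied to $\varphi_Q = A_Q 1 / C$ (a constant $C$ chosen so that $\|\varphi_Q\|_{L^\infty(\mu)} \le 1$, which is legitimate since $|A_Q 1| \le M_\mu^{\mathcal{D}^T} 1 = 1$) upgrades $\|A_\#^P 1\|_{L^1(\mu)} \lesssim \mu(P)$ to $\|A_\#^P 1\|_{L^p(\mu)} \lesssim \mu(P)^{1/p}$ for every $p \in (1,\infty)$, which is exactly the claimed inequality (the $p=1$ case being the hypothesis we verified). The main obstacle I anticipate is the bookkeeping in the $p=1$ estimate: one must carefully separate the ``smooth'' telescoping part, where boundedness of $1/\langle b^T_F \rangle_\cdot$ on non-stopping cubes is used, from the genuinely singular boundary part supported near $\bigcup \mathcal{H}$, and control the latter by the Carleson property of $\mathcal{F}_{Q^*}$ rather than by any doubling — this is precisely the difficulty flagged in the discussion before Proposition \ref{prop:twisted}. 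Everything else is a routine application of the abstract lemmata assembled above.
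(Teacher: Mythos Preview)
Your overall architecture --- reduce to $p=1$ and then invoke Lemma~\ref{lem:p1} --- matches the paper exactly. The gap is in your $p=1$ argument. You assert that $\sum_Q \epsilon_Q D_Q 1$ is ``a martingale-transform of the telescoping function $\phi := 1/\langle b^T_F\rangle_\cdot$'' and that its maximal truncation is controlled by the maximal function of $\phi$ plus Carleson boundary terms on $\bigcup\mathcal{H}$. But the process $Q\mapsto 1/\langle b^T_F\rangle_Q$ is \emph{not} a martingale: there is no function $\phi$ with $\langle \phi\rangle_Q = 1/\langle b^T_F\rangle_Q$ for the relevant $Q$, because averaging and taking reciprocals do not commute. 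Hence the ``standard maximal truncation estimate for martingale differences'' lemma does not apply, and mere boundedness of the process is insufficient --- a bounded adapted non-martingale can oscillate so that the signed transform $\sum_Q \epsilon_Q(M_{Q'}-M_Q)$ has arbitrarily large $L^1$ norm on $P$. Your pointwise bound $A_\#^P 1 \lesssim 1 + \sum_{H\subset P}1_H\cdot(\textup{bounded})$ is therefore false in general.

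The paper supplies the missing idea via the algebraic linearisation
\begin{displaymath}
\frac{1}{\langle b^T_F\rangle_{Q'}} - \frac{1}{\langle b^T_F\rangle_Q}
= \frac{\langle b^T_F\rangle_Q - \langle b^T_F\rangle_{Q'}}{\langle b^T_F\rangle_Q^2}
+ \frac{(\langle b^T_F\rangle_Q - \langle b^T_F\rangle_{Q'})^2}{\langle b^T_F\rangle_Q^2\,\langle b^T_F\rangle_{Q'}}.
\end{displaymath}
After absorbing $1/\langle b^T_F\rangle_Q^2$ into $\tilde\epsilon_Q$, the first term \emph{is} a genuine martingale transform of $h=b^T_F$, so Corollary~\ref{cor:maxmar} applies in $L^2$ and one finishes by Cauchy--Schwarz together with $\|1_P b^T_F\|_{L^2(\mu)}\lesssim\mu(P)^{1/2}$ (which uses $P^a=F$ in the only nontrivial case). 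The quadratic remainder is dominated termwise by $|\Delta^c_Q b^T_F|^2$, so its $L^1$ norm is bounded by the square-function estimate $\sum_Q\|\Delta^c_Q(1_P b^T_F)\|_{L^2(\mu)}^2\lesssim\mu(P)$. This linearisation is the substantive step your sketch glosses over; without it the $p=1$ case is not ``routine''.
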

\begin{proof}
By Lemma \ref{lem:p1} it suffices to prove that for every $P \in \mathcal{D}^T$ there holds that
\begin{displaymath}
\int_P \sup_{\epsilon > 0} \Big| \mathop{\mathop{\sum_{Q \in \mathcal{D}^T}}_{Q^a = F, \, Q \subset P}}_{\ell(Q) > \epsilon} \epsilon_Q D_Q 1 \Big| \,d\mu \lesssim \mu(P).
\end{displaymath}
Let us write
\begin{displaymath}
\frac {1} {\langle b^T_F \rangle _{Q'}  } - \frac {1} {\langle b^T_F \rangle_Q}  =
	\frac {{\langle b^T_F \rangle_Q}  -  {\langle b^T_F \rangle _{Q'}  }} {\langle b^T_F \rangle _{Q} ^2     } +
	\frac { {[\langle b^T_F \rangle_Q}  -  {\langle b^T_F \rangle _{Q'}  }]^2 } {{\langle b^T_F \rangle_Q}^2\langle b^T_F \rangle _{Q'}     }.
\end{displaymath}

Define $\tilde \epsilon_Q := \epsilon_Q / \langle b^T_F \rangle _{Q} ^2$, $Q^a  = F$. Note that $|\tilde \epsilon_Q| \lesssim 1$, and then that
\begin{align*}
&\int_P \sup_{\epsilon > 0} \Big| \mathop{\mathop{\sum_{Q \in \mathcal{D}^T}}_{Q^a = F, \, Q \subset P}}_{\ell(Q) > \epsilon} \tilde \epsilon_Q \sum_{Q' \in \textup{ch}(Q) \setminus \mathcal{H}}
[\langle b^T_F \rangle _{Q'}   - {\langle b^T_F \rangle_Q}]1_{Q'} \Big| \,d\mu \\
& \le \mu(P)^{1/2} \Big( \int_P \Big[ \sup_{\epsilon > 0} \Big| \mathop{\mathop{\sum_{Q \in \mathcal{D}^T}}_{Q^a = F, \, Q \subset P}}_{\ell(Q) > \epsilon} \tilde \epsilon_Q \sum_{Q' \in \textup{ch}(Q) \setminus \mathcal{H}}
[\langle b^T_F \rangle _{Q'}   - {\langle b^T_F \rangle_Q}]1_{Q'} \Big|\Big]^2 \,d\mu\Big)^{1/2} \\
&\lesssim \mu(P)^{1/2}\|1_Pb^T_F\|_{L^2(\mu)} \lesssim \mu(P).
\end{align*}
Here we first appealed to the $L ^2 $ bound for maximal truncations of a martingale difference (Corollary \ref{cor:maxmar}). For the last inequality we have the following explanation. It is trivial if $F \cap P = \emptyset$ or $F \subset P$. Otherwise,
we may assume that there is a $Q$ for which $Q^a = F$ and $Q \subset P \subset F$. But then $P^a = F$.

Next, notice that
\begin{align*}
&\int_P \sup_{\epsilon > 0} \Big| \mathop{\mathop{\sum_{Q \in \mathcal{D}^T}}_{Q^a = F, \, Q \subset P}}_{\ell(Q) > \epsilon} \epsilon_Q \sum_{Q' \in \textup{ch}(Q) \setminus \mathcal{H}}
\frac { {[\langle b^T_F \rangle_Q}  -  {\langle b^T_F \rangle _{Q'}  }]^2 } {{\langle b^T_F \rangle_Q}^2\langle b^T_F \rangle _{Q'}} 1_{Q'} \Big| \,d\mu \\
&\lesssim \mathop{\sum_{Q \in \mathcal{D}^T}}_{Q^a = F, \, Q \subset P} \int \Big[ \sum_{Q' \in \textup{ch}(Q)} (\langle b^T_F \rangle_{Q'}  -  \langle b^T_F \rangle _{Q})1_{Q'} \Big]^2\,d\mu \lesssim \|1_Pb^T_F\|_{L^2(\mu)}^2 \lesssim \mu(P).
\end{align*}

\end{proof}

\begin{prop}\label{prop:lpmax}
Suppose $F \in \mathcal{F}_{Q^*}$. Suppose also that we have constants $\epsilon_Q$, $Q \in \mathcal{D}^T$, which satisfy $|\epsilon_Q| \le 1$. Then for every $p \in (1,\infty)$ and $h \in L^p(\mu)$ there holds that
\begin{displaymath}
\Big\| \sup_{\epsilon > 0} \Big| \mathop{\mathop{\sum_{Q \in \mathcal{D}^T}}_{Q^a = F}}_{\ell(Q) > \epsilon} \epsilon_Q D_Q h \Big|\, \Big\|_{L^p(\mu)}^p \lesssim \|h\|_{L^p(\mu)}^p.
\end{displaymath}
\end{prop}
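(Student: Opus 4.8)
The plan is to apply the Sawyer-type reduction of Lemma~\ref{lem:test} to the operators $A_Qh:=\tfrac14\epsilon_QD_Qh$. Since $\tfrac12\le|\langle b^T_F\rangle_R|\lesssim1$ for every $R\in\mathcal{D}^T$ (including non-stopping children) with $R^a=F$, these satisfy $A_Qh=\sum_{Q'\in\textup{ch}(Q)}c_{Q'}(h)1_{Q'}$ and $|A_Qh|\le M_\mu^{\mathcal{D}^T}h$, and we may assume qualitatively, as in Lemma~\ref{lem:test}, that only finitely many $D_Q$ are nonzero (the final bound being independent of this). Applying Lemma~\ref{lem:test} at two exponents straddling the target exponent and interpolating (Marcinkiewicz; $A_\#=\sup_\epsilon|A_\epsilon\,\cdot\,|$ is sublinear), everything reduces to the testing inequality
\begin{displaymath}
\int_P A_\#^Ph\,d\mu\lesssim\|h1_P\|_{L^p(\mu)}\,\mu(P)^{1/p'},\qquad P\in\mathcal{D}^T,\ h\in L^p(\mu),\ p\in(1,\infty).
\end{displaymath}
After routine reductions ($A_\#^Ph$ sees only $h1_P$; replacing $P$ by $F$ when $F\subseteq P$ and noting the sum is empty when $P\subsetneq F$ but $P^a\neq F$) we may assume $\supp h\subset P\subseteq F$ with $P^a=F$, so that $\int_R|b^T_F|^2\,d\mu\lesssim\mu(R)$ and $\tfrac12\le|\langle b^T_F\rangle_R|\lesssim1$ for all $R\subset P$ with $R^a=F$.

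The mechanism I would use is the splitting
\begin{displaymath}
D_Qh=\langle h\rangle_Q\,D_Q1+\frac{1}{\langle b^T_F\rangle_Q}\,\Delta^*_Qh+E_Qh,
\end{displaymath}
where $\Delta^*_Qh:=\sum_{Q'\in\textup{ch}(Q)\setminus\mathcal{H}}(\langle h\rangle_{Q'}-\langle h\rangle_Q)1_{Q'}$ and $E_Qh:=\sum_{Q'\in\textup{ch}(Q)\setminus\mathcal{H}}\big(\frac{1}{\langle b^T_F\rangle_{Q'}}-\frac{1}{\langle b^T_F\rangle_Q}\big)(\langle h\rangle_{Q'}-\langle h\rangle_Q)1_{Q'}$, the identity being a direct computation. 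Then $A_\#^Ph\le\tfrac14(\mathrm{I}+\mathrm{II}+\mathrm{III})$, with $\mathrm{I},\mathrm{II},\mathrm{III}$ the maximal truncations of $\sum_{Q\subset P,\,Q^a=F}\epsilon_Q(\,\cdot\,)$ applied to the three summands. The martingale part $\mathrm{II}$ is immediate: the multipliers $\epsilon_Q/\langle b^T_F\rangle_Q$ are bounded and $\Delta^*_Qh=\Delta^*_Q(h1_P)$, so Corollary~\ref{cor:maxmar} gives $\|\mathrm{II}\|_{L^p(\mu)}\lesssim\|h1_P\|_{L^p(\mu)}$, whence $\int_P\mathrm{II}\,d\mu\le\|\mathrm{II}\|_{L^p}\mu(P)^{1/p'}$.

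For the paraproduct $\mathrm{I}$ I would run a stopping-time argument over the sparse family $\mathcal{S}$ with top cube $P$ whose children are the maximal $Q\subsetneq S$ with $\langle|h|\rangle_Q>2\langle|h|\rangle_S$. Splitting, for each $x$, the cubes $Q\ni x$ according to the $\mathcal{S}$-cube to which they belong yields $\mathrm{I}\le\sum_{S\in\mathcal{S}}\mathrm{I}_S$ with $\mathrm{I}_S$ supported in $S$; since $|\langle h\rangle_Q|\le2\langle|h|\rangle_S$ for $Q$ belonging to $S$, Lemma~\ref{lem:D1} applied with top cube $S$ and coefficients $\epsilon_Q\langle h\rangle_Q/(2\langle|h|\rangle_S)$ gives $\|\mathrm{I}_S\|_{L^p(\mu)}\lesssim\langle|h|\rangle_S\mu(S)^{1/p}$, hence $\int_S\mathrm{I}_S\,d\mu\lesssim\langle|h|\rangle_S\mu(S)$. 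Summing and using $\mu(S)\le2\mu(E_S)$ for the pairwise disjoint sets $E_S=S\setminus\bigcup\textup{ch}_{\mathcal{S}}(S)$ together with $\langle|h|\rangle_S\le M_\mu^{\mathcal{D}^T}(h1_P)$ on $S$ gives $\int_P\mathrm{I}\,d\mu\lesssim\sum_{S}\langle|h|\rangle_S\mu(S)\lesssim\int_PM_\mu^{\mathcal{D}^T}(h1_P)\,d\mu\lesssim\|h1_P\|_{L^p(\mu)}\mu(P)^{1/p'}$.

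I expect the real obstacle to be the bilinear error $\mathrm{III}$, since $b^T_F$ is only under $L^2(\mu)$ control and a plain square-function estimate would work only for $p\ge2$. Writing $E_Qh(x)=D_Q1(x)\,\Delta^*_Qh(x)$ and using the Cauchy--Schwarz inequality pointwise along the chain through $x$ gives $\mathrm{III}(x)\le S_u(x)S_h(x)$, where $S_u(x)^2=\sum_{Q\ni x,\,Q\subset P,\,Q^a=F}|D_Q1(x)|^2$ and $S_h(x)^2=\sum_{Q\ni x,\,Q\subset P,\,Q^a=F}|\Delta^*_Qh(x)|^2$. The point is that both square functions are tamed by Khintchine's inequality \emph{together with the bounds already established for signed sums}: with independent signs $(\pm_Q)$,
\begin{displaymath}
\|S_u\|_{L^{p'}(\mu)}^{p'}\approx\Exp_{\pm}\Big\|\mathop{\sum_{Q\subset P}}_{Q^a=F}\pm_QD_Q1\Big\|_{L^{p'}(\mu)}^{p'}\lesssim\mu(P),
\end{displaymath}
the last inequality by dominating the full sum by its maximal truncation and invoking Lemma~\ref{lem:D1} (whose proof is exactly where the John--Nirenberg bootstrap rescues the full range $p>1$); the identical computation with Corollary~\ref{cor:maxmar} replacing Lemma~\ref{lem:D1} yields $\|S_h\|_{L^p(\mu)}\lesssim\|h1_P\|_{L^p(\mu)}$. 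Hölder's inequality then gives $\int_P\mathrm{III}\,d\mu\le\|S_u\|_{L^{p'}}\|S_h\|_{L^p}\lesssim\|h1_P\|_{L^p(\mu)}\mu(P)^{1/p'}$, which together with the estimates for $\mathrm{I}$ and $\mathrm{II}$ completes the verification of the testing inequality, and hence the proposition.
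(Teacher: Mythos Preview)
Your proof is correct and follows essentially the same approach as the paper: the same reduction via Lemma~\ref{lem:test} to a testing condition, the same three-term decomposition $D_Qh=\langle h\rangle_Q D_Q1+\tfrac{1}{\langle b^T_F\rangle_Q}\Delta^*_Qh+E_Qh$ (the paper's terms \eqref{e:mt3}, \eqref{e:mt1}, \eqref{e:mt2}), and the same tools for each piece (Corollary~\ref{cor:maxmar} for the martingale part, a stopping-time plus Lemma~\ref{lem:D1} for the paraproduct, and pointwise Cauchy--Schwarz with Khintchine plus Lemma~\ref{lem:D1} for the bilinear error). The only cosmetic differences are that the paper finishes the paraproduct sum $\sum_S\langle|h|\rangle_S\mu(S)$ by a direct H\"older inequality rather than through sparseness and the maximal function, and that for the bilinear error the paper uses the classical $\Delta^c_Q$ (which agrees with your $\Delta^*_Q$ on the support of $D_Q1$) together with standard Littlewood--Paley rather than Corollary~\ref{cor:maxmar}.
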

\begin{proof}
Fix $1 < p < \infty$, $h \in L^p(\mu)$ and $P \in \mathcal{D}^T$.
By Lemma \ref{lem:test} we need to prove that there holds that
\begin{displaymath}
\int_P \sup_{\epsilon > 0} \Big| \mathop{\mathop{\sum_{Q \in \mathcal{D}^T}}_{Q^a = F, \, Q \subset P}}_{\ell(Q) > \epsilon} \epsilon_Q D_Q h \Big| \,d\mu \lesssim \|h1_P\|_{L^p(\mu)} \mu(P)^{1/p'}.
\end{displaymath}
Indeed, then we have the weak type bound for every $p$ and we can interpolate the sublinear operator to establish the strong type bounds.

We now write
\begin{align} \notag 
	\frac {\langle h \rangle _{Q'}} {\langle b^T_F \rangle _{Q'}  } 
	-
	\frac {\langle h \rangle _{Q}} {\langle b^T_F \rangle_Q}  
	&=   
	 \Bigl\{
	\frac {\langle h \rangle _{Q'}} {\langle b^T_F \rangle _{Q}  } 
	-	\frac {\langle h \rangle _{Q}} {\langle b^T_F \rangle_Q}  
	\Bigr\}+ 
	\Bigl\{
		\frac {\langle h \rangle _{Q'}} {\langle b^T_F \rangle _{Q'}  } - 
			\frac {\langle h \rangle _{Q'}} {\langle b^T_F \rangle _{Q}  } 
	\Bigr\} 
	\\ \label{e:mt1} 
	&= \frac 1 {\langle b^T_F \rangle _{Q}  } 
	\bigl\{ {\langle h \rangle _{Q'}}  -  {\langle h \rangle _{Q}} 
	\Bigr\} 
	\\ \label{e:mt2}
	& \qquad +	
	\bigl\{ {\langle h \rangle _{Q'}}  -  {\langle h \rangle _{Q}} \bigr\} 
	\Bigl\{
	\frac 1 { \langle b^T_F \rangle_{Q'}} - 
	\frac 1 {\langle  b^T_F\rangle_{Q} } 
	\Bigr\}
	\\ \label{e:mt3}& \qquad + 
	\langle h\rangle_Q 
	\Bigl\{
	\frac 1 { \langle b^T_F \rangle_{Q'}} - 
	\frac 1 {\langle  b^T_F\rangle_{Q} } 
	\Bigr\}
\end{align}
This leaves us with three terms to control. 

Define $\tilde \epsilon_Q := \epsilon_Q / \langle b^T_F \rangle _{Q}$, $Q^a  = F$. Note that $|\tilde \epsilon_Q| \lesssim 1$.
The control of \eqref{e:mt1} goes as follows:
\begin{align*}
& \int_P \sup_{\epsilon > 0} \Big| \mathop{\mathop{\sum_{Q \in \mathcal{D}^T}}_{Q^a = F, \, Q \subset P}}_{\ell(Q) > \epsilon}
 \tilde \epsilon_Q \sum_{Q' \textup{ch}(Q) \setminus \mathcal{H}} [\langle h \rangle _{Q'} -  \langle h \rangle _Q]1_{Q'}\Big|\,d\mu \\
& \le \Big\| \sup_{\epsilon > 0} \Big| \mathop{\mathop{\sum_{Q \in \mathcal{D}^T}}_{Q^a = F, \, Q \subset P}}_{\ell(Q) > \epsilon}
 \tilde \epsilon_Q \sum_{Q' \textup{ch}(Q) \setminus \mathcal{H}} [\langle h \rangle _{Q'} -  \langle h \rangle _Q]1_{Q'}\Big|\, \Big\|_{L^p(\mu)} \mu(P)^{1/p'} \\
 &\lesssim \|h1_P\|_{L^p(\mu)} \mu(P)^{1/p'}.
\end{align*}
Here we used Corollary \ref{cor:maxmar}.

We will then control \eqref{e:mt2}. Let us define
\begin{displaymath}
\Delta^{c}_Q h = \sum_{Q' \in \textup{ch}(Q)} [\langle h \rangle _{Q'} -  \langle h \rangle _Q]1_{Q'},
\end{displaymath}
where c stands for classical. Notice that
\begin{displaymath}
\Delta^{c}_Q h \cdot D_Q 1 = \sum_{Q' \textup{ch}(Q) \setminus \mathcal{H}} \bigl\{ {\langle h \rangle _{Q'}}  -  {\langle h \rangle _{Q}} \bigr\} 
	\Bigl\{
	\frac 1 { \langle b^T_F \rangle_{Q'}} - 
	\frac 1 {\langle  b^T_F\rangle_{Q} } 
	\Bigr\}1_{Q'}.
\end{displaymath}
The small point we want to make is that the other martingale can in fact be taken classical, since it is multiplied with $D_Q$ which is supported on the children of $Q$ which are not in $\mathcal{H}$.
Now we have that
\begin{displaymath}
\Big| \mathop{\mathop{\sum_{Q \in \mathcal{D}^T}}_{Q^a = F, \, Q \subset P}}_{\ell(Q) > \epsilon} \epsilon_Q \Delta^{c}_Qh \cdot  D_Q 1\Big| \le \Big( \sum_{Q \in \mathcal{D}^T} |\Delta^{c}_Q(h1_P)|^2 \Big)^{1/2}
\Big( \mathop{\sum_{Q \in \mathcal{D}^T}}_{Q^a = F, \, Q \subset P} |D_Q1|^2 \Big)^{1/2}.
\end{displaymath}
It is enough to note that
\begin{displaymath}
\Big\|  \Big( \sum_{Q \in \mathcal{D}^T} |\Delta^{c}_Q(h1_P)|^2 \Big)^{1/2} \Big\|_{L^p(\mu)} \Big\| \Big( \mathop{\sum_{Q \in \mathcal{D}^T}}_{Q^a = F, \, Q \subset P} |D_Q1|^2 \Big)^{1/2} \Big\|_{L^{p'}(\mu)} \lesssim \|h1_P\|_{L^p(\mu)} \mu(P)^{1/p'}.
\end{displaymath}
To control the last term we used Lemma \ref{lem:D1}. Indeed, this form follows from it by averaging over independent random signs $\pm 1$.

We are left to control the term with \eqref{e:mt3}. To control the averages $\langle h \rangle_Q$ in front, we will perform a standard stopping time.
Let $\mathcal{S}_0 = \{P\}$. Let $\mathcal{S}_1$ consist of the maximal $R \in \mathcal{D}^T$, $R \subset P$, for which $\langle |h| \rangle_R > 4\langle |h|\rangle_P$.
Continuing this in the standard way we get the full stopping tree $\mathcal{S} = \bigcup_{j=0}^{\infty} \mathcal{S}_j$. For $Q \in \mathcal{D}^T$, $Q \subset P$, we define
$Q^s$ to be the minimal $S \in \mathcal{S}$ for which $Q \subset S$. We have that $\langle |h| \rangle_Q \le 4\langle |h|\rangle_{Q^s}$.

If $Q^s = S$ we let $\epsilon_Q(S) := \epsilon_Q \frac{ \langle h \rangle_Q}{ \langle |h| \rangle_S}$. Notice that $|\epsilon_Q(S)| \le 4|\epsilon_Q| \lesssim 1$.
We then estimate using the $p=1$ case of Lemma \ref{lem:D1}:
\begin{align*}
& \int_P \sup_{\epsilon > 0} \Big| \sum_{S \in \mathcal{S}} \mathop{\mathop{\sum_{Q \in \mathcal{D}^T}}_{Q^a = F, \, Q^s = S}}_{\ell(Q) > \epsilon} \epsilon_Q \langle h \rangle_Q D_Q 1 \Big|\,d\mu \\
&\le  \sum_{S \in \mathcal{S}} \int_S  \sup_{\epsilon > 0} \Big| \mathop{\mathop{\sum_{Q \in \mathcal{D}^T}}_{Q^a = F, \, Q^s = S}}_{\ell(Q) > \epsilon} \epsilon_Q \langle h \rangle_Q D_Q 1 \Big|\,d\mu \\
&=  \sum_{S \in \mathcal{S}} \langle |h| \rangle_S \int_S \sup_{\epsilon > 0}  \Big| \mathop{\mathop{\sum_{Q \in \mathcal{D}^T}}_{Q^a = F, \, Q^s = S}}_{\ell(Q) > \epsilon} \epsilon_Q(S) D_Q 1 \Big|\,d\mu \\
&\lesssim \sum_{S \in \mathcal{S}} \langle |h| \rangle_S \mu(S) \le \Big( \sum_{S \in \mathcal{S}} \langle |h| \rangle_S^p \mu(S) \Big)^{1/p} \Big( \sum_{S \in \mathcal{S}} \mu(S) \Big)^{1/p'} \lesssim \|h1_P\|_{L^p(\mu)} \mu(P)^{1/p'}.
\end{align*}
This completes the proof of the proposition.
\end{proof}

\begin{rem}
We only need the following conclusion of Proposition \ref{prop:twisted}. If $|h| \le 1$ and $S \subset \R^n$ is an arbitrary set, then there holds that
\begin{displaymath}
\Big\|\mathop{\sum_{Q \in \mathcal{D}^T:\, Q \subset S}}_{Q^a = F} \epsilon_Q \Delta_Q h\Big\|_{L^2(\mu)}^2 = \Big\|\mathop{\sum_{Q \in \mathcal{D}^T:\, Q \subset S}}_{Q^a = F} \epsilon_Q \Delta_Q (1_{S \cap F} h)\Big\|_{L^2(\mu)}^2
\lesssim \mu(S \cap F).
\end{displaymath}
\end{rem}

\subsection{Further reductions}
We now expand (see Proposition 2.8 of \cite{LM:SF})
\begin{displaymath}
f = \mathop{\sum_{Q \in \mathcal{D}^T}} \Delta_Q f + \langle f \rangle_{Q^*} b^T_{Q^*}
\end{displaymath}
and
\begin{displaymath}
g =  \mathop{\sum_{R \in \mathcal{D}^{T^*}}} \Delta_R g + \langle g \rangle_{R^*} b^{T^*}_{R^*}.
\end{displaymath}
If $Q \in \mathcal{D}^T$ is such that $Q^a \in \mathcal{F}^j_{Q^*}$, we define $\beta(Q) := j$. Let $\beta > 0$ be a large parameter (we shall fix it momentarily).
We have
\begin{align*}
|\langle Tf, g\rangle| \le \Big| \mathop{\sum_{Q \in \mathcal{D}^T}}_{\beta(Q) < \beta} \langle T(\Delta_Q f), g\rangle\Big| + \Big| \mathop{\sum_{Q \in \mathcal{D}^T}}_{\beta(Q) \ge \beta} \langle T(\Delta_Q f), g\rangle\Big|
+ |\langle Tb^T_{Q^*}, g\rangle|.
\end{align*}
Notice that
\begin{align*}
\Big| \mathop{\sum_{Q \in \mathcal{D}^T}}_{\beta(Q) \ge \beta} \langle T(\Delta_Q f), g\rangle\Big| &\le \sum_{j=\beta}^{\infty}
 \|T\| \Big\| \sum_{F \in \mathcal{F}^j_{Q^*}} \mathop{\sum_{Q \in \mathcal{D}^T}}_{Q^a = F} \Delta_Q f\Big\|_{L^2(\mu)} \|g\|_{L^2(\mu)} \\
 &\le \|T\| \mu(Q_0)^{1/2} \sum_{j=\beta}^{\infty} \Big(  \sum_{F \in \mathcal{F}^j_{Q^*}} \Big\|\mathop{\sum_{Q \in \mathcal{D}^T}}_{Q^a = F} \Delta_Q f\Big\|_{L^2(\mu)}^2 \Big)^{1/2} \\
 &\lesssim \|T\| \mu(Q_0)^{1/2} \sum_{j=\beta}^{\infty} \Big(  \sum_{F \in \mathcal{F}^j_{Q^*}} \mu(F) \Big)^{1/2} \\
 &\le \|T\| \mu(Q_0)^{1/2} \sum_{j=\beta}^{\infty} \tau^{j/2} \mu(Q^*)^{1/2} \lesssim \tau^{\beta/2} \|T\| \mu(\lambda Q_0)
\end{align*}
and $|\langle Tb^T_{Q^*}, g\rangle| \le \|1_{Q^*} Tb^T_{Q^*}\|_{L^2(\mu)} \|g\|_{L^2(\mu)} \lesssim \mu(\lambda Q_0)$.

Next, we have
\begin{align*}
\mathop{\sum_{Q \in \mathcal{D}^T}}_{\beta(Q) < \beta} \langle T(\Delta_Q f), g\rangle &= \mathop{\sum_{Q \in \mathcal{D}^T}}_{\beta(Q) < \beta} \mathop{\sum_{R \in \mathcal{D}^{T^*}}}_{\beta(R) < \beta} \langle T(\Delta_Q f), \Delta_R g\rangle
+ \Big\langle T\Big( \mathop{\sum_{Q \in \mathcal{D}^T}}_{\beta(Q) < \beta} \Delta_Q f\Big),  \mathop{\sum_{R \in \mathcal{D}^{T^*}}}_{\beta(R) \ge \beta} \Delta_R g\Big\rangle \\
&+ \langle g \rangle_{R^*} \langle f,   1_{R^*}T^*b^{T^*}_{R^*} \rangle -  \langle g \rangle_{R^*} \Big\langle T\Big( \mathop{\sum_{Q \in \mathcal{D}^T}}_{\beta(Q) \ge \beta} \Delta_Q f\Big), b^{T^*}_{R^*} \Big\rangle \\
& -\langle f\rangle_{Q^*} \langle g \rangle_{R^*} \langle  Tb^T_{Q^*},   b^{T^*}_{R^*} \rangle.
\end{align*}
Again, there holds that
\begin{displaymath}
\Big| \Big\langle T\Big( \mathop{\sum_{Q \in \mathcal{D}^T}}_{\beta(Q) < \beta} \Delta_Q f\Big),  \mathop{\sum_{R \in \mathcal{D}^{T^*}}}_{\beta(R) \ge \beta} \Delta_R g\Big\rangle \Big| \lesssim \beta \tau^{\beta/2} \|T\| \mu(\lambda Q_0)
\end{displaymath}
and
\begin{displaymath}
\Big|  \langle g \rangle_{R^*} \Big\langle T\Big( \mathop{\sum_{Q \in \mathcal{D}^T}}_{\beta(Q) \ge \beta} \Delta_Q f\Big), b^{T^*}_{R^*} \Big\rangle \Big| \lesssim \tau^{\beta/2} \|T\| \mu(\lambda Q_0).
\end{displaymath}
Also, we have that $|\langle g \rangle_{R^*} \langle f,   1_{R^*}T^*b^{T^*}_{R^*} \rangle| \lesssim \mu(\lambda Q_0)$.

The pairing $\langle  Tb^T_{Q^*},   b^{T^*}_{R^*} \rangle$ is trickier. Let $u > 0$. We estimate
\begin{align*}
|\langle  Tb^T_{Q^*},   b^{T^*}_{R^*} \rangle| &\le  \|1_{Q^*} Tb^T_{Q^*}\|_{L^2(\mu)} \| b^{T^*}_{R^*}\|_{L^2(\mu)} + \|T\| \|b^T_{Q^*}\|_{L^2(\mu)} \|1_{(1+u)Q^* \setminus Q^*} b^{T^*}_{R^*}\|_{L^2(\mu)} \\
&+ \int_{R^* \setminus (1+u)Q^*} \int_{Q^*} \frac{C(u)}{\ell(Q^*)^{m/2} \ell(R^*)^{m/2}} |b^T_{Q^*}(y)| |b_{R^*}^{T^*}(x)|\,d\mu(y)\,d\mu(x) \\
&\le C(u) \mu(\lambda Q_0) +  \|T\| \mu(\lambda Q_0)^{1/2} \|1_{(1+u)Q^* \setminus Q^*} b^{T^*}_{R^*}\|_{L^2(\mu)}.
\end{align*}
Notice that with a fixed $w'$ we have that
\begin{align*}
E_{w} \|1_{(1+u)Q^* \setminus Q^*} b^{T^*}_{R^*}\|_{L^2(\mu)} \le \Big( \int_{R^*} P_w(x \in (1+u)Q^* \setminus Q^*) |&b^{T^*}_{R^*}(x)|\,d\mu(x)\Big)^{1/2} \\
&\le c(u)\mu(\lambda Q_0)^{1/2},
\end{align*}
where $c(u) \to 0 $ when $u \to 0$.

The conclusion of this subsection is that
\begin{align*}
|\langle Tf, g\rangle| \le E_{w,w'} \Big|&\mathop{\sum_{Q \in \mathcal{D}^T}}_{\beta(Q) < \beta} \mathop{\sum_{R \in \mathcal{D}^{T^*}}}_{\beta(R) < \beta} \langle T(\Delta_Q f), \Delta_R g\rangle\Big| \\
&+ C(u)\mu(\lambda Q_0) + c(\beta) \|T\| \mu(\lambda Q_0) + c(u) \|T\| \mu(\lambda Q_0),
\end{align*}
where $c(\beta) \to 0 $ when $\beta \to 0$ and $c(u) \to 0 $ when $u \to 0$. We now fix $\beta$ and $u$ to be so small that $(c(\beta) + c(u))C_2 \le 1/4$.
In the sequel some estimates will depend on the fixed parameter $\beta$ but this is no longer a concern (and the dependance will not be tracked).
We may now focus on proving that
\begin{displaymath}
E_{w,w'} \Big|\mathop{\sum_{Q \in \mathcal{D}^T}}_{\beta(Q) < \beta} \mathop{\sum_{R \in \mathcal{D}^{T^*}}}_{\beta(R) < \beta} \langle T(\Delta_Q f), \Delta_R g\rangle\Big| \le (C_4 + \tilde c \|T\|)\mu(\lambda Q_0),
\end{displaymath}
where $\tilde c$ is so small that $C_2\tilde c \le 1/4$.

\subsection{Splitting of the summation}
We set $\gamma = \alpha/(2m+2\alpha)$, where $\alpha > 0$ appears in the kernel estimates and $m$ appears in $\mu(B(x,t)) \lesssim t^m$. We also let $r > 0$ be a large constant that we shall fix
later. We will also focus on the part of the summation where $\ell(Q) < \ell(R)$. We will simply split this sum in to three standard pieces:
\begin{itemize}
\item $Q$: $\ell(Q) < \ell(R)$ and $d(Q, R) > \ell(Q)^{\gamma}\ell(R)^{1-\gamma}$;
\item $Q$: $\ell(Q) \le 2^{-r}\ell(R)$ and $d(Q, R) \le \ell(Q)^{\gamma}\ell(R)^{1-\gamma}$;
\item $Q$: $2^{-r}\ell(R) < \ell(Q) < \ell(R)$ and $d(Q, R) \le \ell(Q)^{\gamma}\ell(R)^{1-\gamma}$.
\end{itemize}
We call the first sum the separated sum, the second sum the nested sum and the last sum the diagonal sum. Here the term nested is the most cryptic, but
will be justified using probability by introducing good cubes to the second sum in a specific way (like in \cite{HM}).

In the next section we will prove that
\begin{displaymath}
\mathop{\sum_{R \in \mathcal{D}^{T^*}}}_{\beta(R) < \beta} \mathop{\mathop{\sum_{Q \in \mathcal{D}^T:\, \beta(Q) < \beta}}_{\ell(Q) < \ell(R)}}_{d(Q,R) > \ell(Q)^{\gamma}\ell(R)^{1-\gamma}}
|\langle T(\Delta_Q f), \Delta_R g\rangle| \lesssim \mu(\lambda Q_0).
\end{displaymath}
In a section after that we will show that
\begin{displaymath}
E_{w,w'} \Big| \mathop{\sum_{R \in \mathcal{D}^{T^*}}}_{\beta(R) < \beta} \mathop{\mathop{\sum_{Q \in \mathcal{D}^T:\, \beta(Q) < \beta}}_{\ell(Q) \le 2^{-r}\ell(R)}}_{d(Q, R) \le \ell(Q)^{\gamma}\ell(R)^{1-\gamma}}
\langle T(\Delta_Q f), \Delta_R g\rangle\Big| \le C\mu(\lambda Q_0) + c(r)\|T\|\mu(\lambda Q_0),
\end{displaymath}
where $c(r) \to 0$ as $r \to 0$. We may then fix the parameter $r$ at this point of the argument to be so small that $C_2c(r) \le 1/16$.
The estimates of the last sum may depend on $r$, but this is no longer a concern (and the dependance will not be tracked). In the last section we will prove that
\begin{displaymath}
E_{w,w'} \mathop{\sum_{R \in \mathcal{D}^{T^*}}}_{\beta(R) < \beta} \mathop{\mathop{\sum_{Q \in \mathcal{D}^T:\, \beta(Q) < \beta}}_{2^{-r}\ell(R) < \ell(Q) < \ell(R)}}_{d(Q, R) \le \ell(Q)^{\gamma}\ell(R)^{1-\gamma}}
| \langle T(\Delta_Q f), \Delta_R g\rangle| \le C\mu(\lambda Q_0) + \hat c\|T\|\mu(\lambda Q_0),
\end{displaymath}
where $\hat c C_2 \le 1/16$. Combining with the symmetric argument for the case $\ell(Q) \ge \ell(R)$ this proves our main theorem.

\section{The separated sum}
If $\ell(Q) < \ell(R)$ and $d(Q,R) > \ell(Q)^{\gamma}\ell(R)^{1-\gamma}$, then
\begin{displaymath}
|\langle T(\Delta_Q f), \Delta_R g\rangle| \lesssim A_{QR} \|\Delta_Q f\|_{L^2(\mu)}  \|\Delta_R g\|_{L^2(\mu)},
\end{displaymath}
where
\begin{align*}
A_{QR} &:= \frac{\ell(Q)^{\alpha/2}\ell(R)^{\alpha/2}}{D(Q,R)^{m+\alpha}} \mu(Q)^{1/2}\mu(R)^{1/2}; \\
D(Q,R) &:= \ell(Q) + \ell(R) + d(Q,R).
\end{align*}
Moreover, by \cite{NTV1} this $\ell^2$ estimate holds 
\begin{displaymath}
\sum_{Q, R} A_{QR} x_Q y_R \lesssim \Big( \sum_Q x_Q^2\Big)^{1/2} \Big( \sum_R y_R^2\Big)^{1/2}.
\end{displaymath}
Therefore, we have that
\begin{align*}
\mathop{\sum_{R \in \mathcal{D}^{T^*}}}_{\beta(R) < \beta} &\mathop{\mathop{\sum_{Q \in \mathcal{D}^T:\, \beta(Q) < \beta}}_{\ell(Q) < \ell(R)}}_{d(Q,R) > \ell(Q)^{\gamma}\ell(R)^{1-\gamma}}
|\langle T(\Delta_Q f), \Delta_R g\rangle| \\
& \lesssim \Big( \sum_Q \|\Delta_Q f\|_{L^2(\mu)}^2\Big)^{1/2} \Big( \sum_R \|\Delta_R g\|_{L^2(\mu)}^2\Big)^{1/2}\\
& \lesssim \mu(Q^*)^{1/2}\mu(R^*)^{1/2} \le \mu(\lambda Q_0).
\end{align*}

\section{The nested sum}
Define $\mathcal{D}^T_{\textrm{bad}, \, A}$ to be the collection of those cubes $Q \in \mathcal{D}^T$ which are bad with respect to \emph{some} $\mathcal{D}^{T^*}$-cube of
side length $A$ or larger. We define that this demands that there should exist a cube $S \in \mathcal{D}^{T^*}$ for which $\ell(S) \ge A$ and $d(Q, \textrm{sk}\,S) \le \ell(Q)^{\gamma}\ell(S)^{1-\gamma}$, where
$\textup{sk}\, S := \bigcup_{S' \in \textup{ch}(S)} \partial S'$. Let $\mathcal{D}^T_{\textrm{good},\,A}$ be the collection of those $Q \in \mathcal{D}^T$ which are good with respect to \emph{all} $\mathcal{D}^{T^*}$-cubes of side length
$A$ and larger. This means that for every $S \in \mathcal{D}^{T^*}$ for which $\ell(S) \ge A$ there holds that $d(Q, \textrm{sk}\,S) > \ell(Q)^{\gamma}\ell(S)^{1-\gamma}$.
\begin{rem}
Notice carefully the usage of the words \emph{some} and \emph{all} above.
\end{rem}

Let us write
\begin{displaymath}
\mathop{\sum_{R \in \mathcal{D}^{T^*}}}_{\beta(R) < \beta} \mathop{\mathop{\sum_{Q \in \mathcal{D}^T:\, \beta(Q) < \beta}}_{\ell(Q) \le 2^{-r}\ell(R)}}_{d(Q, R) \le \ell(Q)^{\gamma}\ell(R)^{1-\gamma}}
\langle T(\Delta_Q f), \Delta_R g\rangle = S_{\textup{good}} + S_{\textup{bad}},
\end{displaymath}
where
\begin{displaymath}
S_{\textup{good}} = \mathop{\sum_{R \in \mathcal{D}^{T^*}}}_{\beta(R) < \beta} 
\mathop{\mathop{\sum_{Q \in \mathcal{D}^T_{\textrm{good},\,\ell(R)}:\, \beta(Q) < \beta}}_{\ell(Q) \le 2^{-r}\ell(R)}}_{d(Q, R) \le \ell(Q)^{\gamma}\ell(R)^{1-\gamma}}
\langle T(\Delta_Q f), \Delta_R g\rangle
\end{displaymath}
and
\begin{displaymath}
S_{\textup{bad}} = \mathop{\sum_{R \in \mathcal{D}^{T^*}}}_{\beta(R) < \beta} 
\mathop{\mathop{\sum_{Q \in \mathcal{D}^T_{\textrm{bad},\,\ell(R)}:\, \beta(Q) < \beta}}_{\ell(Q) \le 2^{-r}\ell(R)}}_{d(Q, R) \le \ell(Q)^{\gamma}\ell(R)^{1-\gamma}}
\langle T(\Delta_Q f), \Delta_R g\rangle.
\end{displaymath}

\subsection{The bad part is small}
Notice that for a given $R \in \mathcal{D}^{T^*}$ there holds that
\begin{displaymath}
\Big\| \mathop{\mathop{\sum_{Q \in \mathcal{D}^T_{\textrm{bad},\,\ell(R)}:\, \beta(Q) < \beta}}_{\ell(Q) \le 2^{-r}\ell(R)}}_{d(Q, R) \le \ell(Q)^{\gamma}\ell(R)^{1-\gamma}} \Delta_Q f\Big\|_{L^2(\mu)}
\le \sum_{k=r}^{\infty} \Big(\mathop{\mathop{\sum_{Q \in \mathcal{D}^T_{\textrm{bad},\,2^k\ell(Q)}}}_{\ell(Q) = 2^{-k}\ell(R)}}_{d(Q, R) \le \ell(Q)^{\gamma}\ell(R)^{1-\gamma}} \|\Delta_Q f\|_{L^2(\mu)}^2 \Big)^{1/2}
\end{displaymath}
so that
\begin{align*}
|S_{\textup{bad}}| &\le \|T\|\sum_{k=r}^{\infty} \sum_{R \in \mathcal{D}^{T^*}} 
\Big(\mathop{\mathop{\sum_{Q \in \mathcal{D}^T_{\textrm{bad},\,2^k\ell(Q)}}}_{\ell(Q) = 2^{-k}\ell(R)}}_{d(Q, R) \le \ell(Q)^{\gamma}\ell(R)^{1-\gamma}} \|\Delta_Q f\|_{L^2(\mu)}^2 \Big)^{1/2} \|\Delta_R g\|_{L^2(\mu)} \\
&\le C\|T\| \sum_{k=r}^{\infty} \Big( \sum_{Q \in \mathcal{D}^T_{\textrm{bad},\,2^k\ell(Q)}} \|\Delta_Q f\|_{L^2(\mu)}^2\Big)^{1/2} \Big(  \sum_{R \in \mathcal{D}^{T^*}}  \|\Delta_R g\|_{L^2(\mu)}^2 \Big)^{1/2} \\
&\le C\|T\| \mu(\lambda Q_0)^{1/2} \sum_{k=r}^{\infty} \Big( \sum_{Q \in \mathcal{D}^T_{\textrm{bad},\,2^k\ell(Q)}} \|\Delta_Q f\|_{L^2(\mu)}^2\Big)^{1/2},
\end{align*}
where we used that
\begin{displaymath}
\#\{ R \in \mathcal{D}^{T^*}:\, \ell(R) = 2^k\ell(Q) \textup{ and } d(Q, R) \le \ell(Q)^{\gamma}\ell(R)^{1-\gamma}\} \lesssim 1, \qquad k \ge 0.
\end{displaymath}
We conclude that
\begin{align*}
E_{w, w'} |S_{\textup{bad}}| \le C\|T\| \mu(\lambda Q_0)^{1/2}E_w &\sum_{k=r}^{\infty} \Big( \sum_{Q \in \mathcal{D}^T} \mathbb{P}_{w'}(Q \in \mathcal{D}^T_{\textrm{bad},\,2^k\ell(Q)}) \|\Delta_Q f\|_{L^2(\mu)}^2\Big)^{1/2} \\
&\le C\|T\| \mu(\lambda Q_0) \sum_{k=r}^{\infty} 2^{-\gamma k/2} = c(r)\|T\| \mu(\lambda Q_0),
\end{align*}
where $c(r) \to 0$ as $r \to \infty$. We now fix $r$ so that $c(r)C_2 \le 1/16$.

\subsection{The good part}
Note that if $Q \in \mathcal{D}^T$ is good with respect to $R \in \mathcal{D}^{T^*}$ and $d(Q,R) \le \ell(Q)^{\gamma}\ell(R)^{1-\gamma}$, then there is a child $R_Q \in \textup{ch}(R)$ so that
$Q \subset R_Q$ and $d(Q, R_Q^c) > \ell(Q)^{\gamma}\ell(R)^{1-\gamma}$. So the good part is indeed a nested sum in the sense that $Q$ is deep inside $R$.

Before having to split the argument into a case study, we prove two lemmata.
\begin{lem}\label{lem:es1}
If $R \in \mathcal{D}^{T^*}$, $Q \in \mathcal{D}^T_{\textrm{good},\,\ell(R)}$ and $Q \subset R$, then
there holds that
\begin{align*}
|\langle T(\Delta_Q f), 1_{R_Q^c}b^{T^*}_{R^a}\rangle| \lesssim  \Big(\frac{\ell(Q)}{\ell(R)}\Big)^{\alpha/2} \mu(Q)^{1/2} \|\Delta_Q f\|_{L^2(\mu)}.
\end{align*}
\end{lem}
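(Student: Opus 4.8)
The plan is to exploit the zero average of the twisted martingale difference $\Delta_Q f$ together with the Hölder smoothness of $K$ and the separation of $Q$ from $R_Q^c$ coming from goodness.

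First I would record that $\int_Q\Delta_Q f\,d\mu=0$: this is immediate from the definition of $\Delta_Q$ once one notes $\int_{Q'}b^T_{(Q')^a}\,d\mu=\langle b^T_{(Q')^a}\rangle_{Q'}\mu(Q')$ and $\int_Q b^T_{Q^a}\,d\mu=\langle b^T_{Q^a}\rangle_Q\mu(Q)$, so both sums telescope to $\int_Q f\,d\mu$. Since $\Delta_Q f$ is supported in $Q\subset R_Q$ whereas $1_{R_Q^c}b^{T^*}_{R^a}$ is supported in $R_Q^c$, the two supports are disjoint, so I may write the pairing with the kernel, subtract $K(x,c_Q)$ (allowed by the zero average), and use the smoothness of $K$ in the second variable. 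This last step is legitimate because for $x\in R_Q^c$ goodness gives $|x-c_Q|\ge\dist(Q,R_Q^c)\ge\dist(Q,\operatorname{sk}R)>\ell(Q)^\gamma\ell(R)^{1-\gamma}$, which in the range where the lemma is applied exceeds $2\diam Q$. Together with Cauchy--Schwarz, $\int_Q|\Delta_Q f|\,d\mu\le\mu(Q)^{1/2}\|\Delta_Q f\|_{L^2(\mu)}$, this reduces the lemma to the pointwise-free estimate
\begin{equation}\label{eq:es1red}
\ell(Q)^\alpha\int_{R^a\setminus R_Q}\frac{|b^{T^*}_{R^a}(x)|}{|x-c_Q|^{m+\alpha}}\,d\mu(x)\lesssim\Big(\frac{\ell(Q)}{\ell(R)}\Big)^{\alpha/2},
\end{equation}
the domain being $R^a\setminus R_Q$ because $\operatorname{spt}b^{T^*}_{R^a}\subset R^a$.

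To prove \eqref{eq:es1red} I would split $R^a\setminus R_Q=(R\setminus R_Q)\cup(R^a\setminus R)$ and Whitney-decompose $R^a\setminus R=\bigsqcup_P P$ relative to $R$ inside $R^a$: writing $R=R^{(0)}\subsetneq R^{(1)}\subsetneq\cdots$ for the dyadic ancestors of $R$, the $P$'s are the children, other than $R^{(j)}$, of $R^{(j+1)}$ for those $j$ with $R^{(j+1)}\subseteq R^a$; thus $\ell(P)\ge\ell(R)$ and the parent of every $P$ is an ancestor of $R$ inside $R^a$. On $R\setminus R_Q$ one has $|x-c_Q|>\ell(Q)^\gamma\ell(R)^{1-\gamma}=:\rho$, so Cauchy--Schwarz, the growth bound $\mu(B(x,t))\lesssim t^m$, and $\int_R|b^{T^*}_{R^a}|^2\,d\mu\lesssim\mu(R)\lesssim\ell(R)^m$ give $\int_{R\setminus R_Q}|b^{T^*}_{R^a}||x-c_Q|^{-m-\alpha}\,d\mu\lesssim\rho^{-m/2-\alpha}\ell(R)^{m/2}$; on each $P$, since $Q$ is good with respect to $P$ (here one really uses that goodness holds against \emph{all} $\mathcal D^{T^*}$-cubes of side $\ge\ell(R)$), one has $|x-c_Q|>\ell(Q)^\gamma\ell(P)^{1-\gamma}=:\rho_P$ while $\int_P|b^{T^*}_{R^a}|^2\,d\mu\le\int_{\mathrm{parent}(P)}|b^{T^*}_{R^a}|^2\,d\mu\lesssim\ell(P)^m$, so $\int_P|b^{T^*}_{R^a}||x-c_Q|^{-m-\alpha}\,d\mu\lesssim\rho_P^{-m/2-\alpha}\ell(P)^{m/2}$. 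Multiplying by $\ell(Q)^\alpha$ and inserting the definitions of $\rho,\rho_P$ turns these into $(\ell(Q)/\ell(R))^\theta$ and $(\ell(Q)/\ell(P))^\theta$ with $\theta:=\alpha-\gamma(m/2+\alpha)$; as there are $\le 2^n-1$ Whitney cubes of each side $2^j\ell(R)$, the geometric sum in $j$ collapses and the whole left side of \eqref{eq:es1red} is $\lesssim(\ell(Q)/\ell(R))^\theta$. Finally $\gamma=\alpha/(2m+2\alpha)$ gives $\theta=\alpha(3m+2\alpha)/(4(m+\alpha))$, and $\theta\ge\alpha/2\iff m\ge 0$; since $\ell(Q)/\ell(R)\le 1$, this yields \eqref{eq:es1red}.

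The step I expect to be the real obstacle is the $L^2(\mu)$ control of $b^{T^*}_{R^a}$ at the various scales. The only bound available a priori, $\|b^{T^*}_{R^a}\|_{L^2(\mu)}^2\le A\mu(R^a)$, is useless: for a genuinely non-doubling $\mu$ the mass $\mu(R^a)$ may dwarf $\mu(R)$ (or $\mu(P)$), and using it would leave an uncontrolled factor $(\mu(R^a)/\ell(R)^m)^{1/2}$. What is needed is the scale-by-scale bound $\int_P|b^{T^*}_{R^a}|^2\,d\mu\lesssim\mu(P)$, and this is precisely where the stopping-time hypothesis enters: any $\mathcal D^{T^*}$-cube $P'$ with $R\subseteq P'\subseteq R^a$ is, by minimality of $R^a$, not contained in any stopping child of $R^a$, so the analogue of condition (2) (with $R^*$ replaced by $R^a$) fails on $P'$, i.e.\ $\langle|M_\mu b^{T^*}_{R^a}|^2\rangle_{P'}\le 16A^2\|M_\mu\|^2$, and since $|b^{T^*}_{R^a}|\le M_\mu b^{T^*}_{R^a}$ $\mu$-a.e.\ we get $\langle|b^{T^*}_{R^a}|^2\rangle_{P'}\lesssim 1$ (for $P'=R^a$ use hypothesis (3)). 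Applying this with $P'=R$ and $P'=\mathrm{parent}(P)$ supplies exactly the local $L^2$ estimates used above. The companion point, which is what makes the Whitney cubes usable, is that goodness of $Q$ against all $\mathcal D^{T^*}$-cubes of side $\ge\ell(R)$ forces every $P$ to lie at distance $>\rho_P=\ell(Q)^\gamma\ell(P)^{1-\gamma}$ from $Q$, so the kernel decay outpaces the growth of the $L^2$-mass of $b^{T^*}_{R^a}$ as $P$ ranges to larger scales.
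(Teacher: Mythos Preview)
Your argument is correct and follows the same overall strategy as the paper: use the zero mean of $\Delta_Q f$ with the H\"older estimate on $K$ to reduce to the integral $\int_{R^a\setminus R_Q}|b^{T^*}_{R^a}(x)|\,|x-c_Q|^{-m-\alpha}\,d\mu(x)$, then chop $R^a\setminus R_Q$ into pieces of increasing scale, invoke goodness for separation, and use the stopping condition to control the local size of $b^{T^*}_{R^a}$.

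The differences are purely in the execution. The paper decomposes into the telescoping annuli $R_Q^{(j+1)}\setminus R_Q^{(j)}$, $0\le j\le M$ with $R_Q^{(M+1)}=R^a$, rather than into $R\setminus R_Q$ together with Whitney siblings as you do; these are equivalent partitions. More notably, on each piece the paper simply bounds $|x-c_Q|^{-m-\alpha}\le d(Q,\partial R_Q^{(j)})^{-m-\alpha}$ pointwise and uses the $L^1$ estimate $\int_{R_Q^{(j+1)}}|b^{T^*}_{R^a}|\,d\mu\lesssim\mu(R_Q^{(j+1)})$, which follows from the same stopping hypothesis you identify. Combined with $d(Q,\partial R_Q^{(j)})^{m+\alpha}\gtrsim\ell(Q)^{\alpha/2}\ell(R_Q^{(j)})^{m+\alpha/2}\gtrsim 2^{\alpha j/2}\ell(Q)^{\alpha/2}\ell(R)^{\alpha/2}\mu(R_Q^{(j+1)})$, the measures cancel exactly and one gets the geometric sum with exponent precisely $\alpha/2$, without the intermediate Cauchy--Schwarz step and the $\theta$-computation. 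Your route via Cauchy--Schwarz and the $L^2$ bound works too and yields the slightly better exponent $\theta=\alpha(3m+2\alpha)/(4(m+\alpha))\ge\alpha/2$, but the paper's $L^1$ argument is shorter.
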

\begin{proof}
We will first show that
\begin{displaymath}
 \int_{R^a \setminus R_Q} \frac{|b^{T^*}_{R^a}(x)|}{|x-c_Q|^{m+\alpha}} \,d\mu(x) \lesssim \ell(Q)^{-\alpha/2}\ell(R)^{-\alpha/2}.
\end{displaymath}
Let $M$ be such that $R_Q^{(M+1)} = R^a$. Notice that since $Q$ is good with respect to all $S \in \mathcal{D}^{T^*}$ for which $\ell(S) \ge \ell(R)$, there holds that
\begin{displaymath}
d(Q, \partial R_Q^{(j)})^{m+\alpha} \gtrsim \ell(Q)^{\alpha/2} \ell(R_Q^{(j)})^{m+\alpha/2} \gtrsim 2^{\alpha j/2} \ell(Q)^{\alpha/2} \ell(R)^{\alpha/2} \mu(R_Q^{(j+1)}).
\end{displaymath}
Here we used that $\gamma(m+\alpha) = \alpha/2$.

We may now estimate
\begin{align*}
 \int_{R^a \setminus R_Q} \frac{|b^{T^*}_{R^a}(x)|}{|x-c_Q|^{m+\alpha}} \,d\mu(x) &= \sum_{j=0}^M \int_{R_Q^{(j+1)} \setminus R_Q^{(j)}} \frac{|b^{T^*}_{R^a}(x)|}{|x-c_Q|^{m+\alpha}} \,d\mu(x) \\
 &\le \sum_{j=0}^M \frac{1}{d(Q, \partial R_Q^{(j)})^{m+\alpha}} \int_{R_Q^{(j+1)}} |b^{T^*}_{R^a}(x)| \,d\mu(x) \\
 &\lesssim \sum_{j=0}^M \frac{\mu(R_Q^{(j+1)})}{2^{\alpha j/2} \ell(Q)^{\alpha/2} \ell(R)^{\alpha/2} \mu(R_Q^{(j+1)})} \lesssim \ell(Q)^{-\alpha/2}\ell(R)^{-\alpha/2}.
\end{align*}

To end the proof it remains to use the H\"older estimate of $K$ to get that
\begin{align*}
|\langle T(\Delta_Q f), 1_{R_Q^c}b^{T^*}_{R^a}\rangle| \lesssim \|\Delta_Q f\|_{L^1(\mu)} \cdot \ell(Q)^{\alpha}& \int_{R^a \setminus R_Q} \frac{|b^{T^*}_{R^a}(x)|}{|x-c_Q|^{m+\alpha}} \,d\mu(x) \\
&\lesssim \Big(\frac{\ell(Q)}{\ell(R)}\Big)^{\alpha/2} \mu(Q)^{1/2} \|\Delta_Q f\|_{L^2(\mu)}.
\end{align*}
\end{proof}

\begin{lem}\label{lem:es2}
Let $R \in \mathcal{D}^{T^*}$, $Q \in \mathcal{D}^T$ be good with respect to $R$ and $Q \subset R$.
Then there holds that
\begin{displaymath}
|\langle T(\Delta_Q f), 1_{R_Q^c}\Delta_R g\rangle| \lesssim B_{QR} \|\Delta_Q f\|_{L^2(\mu)} \|\Delta_R g\|_{L^2(\mu)},
\end{displaymath}
where
\begin{displaymath}
B_{QR} :=  \Big(\frac{\ell(Q)}{\ell(R)}\Big)^{\alpha/2} \Big(\frac{\mu(Q)}{\mu(R_Q)}\Big)^{1/2}.
\end{displaymath}
\end{lem}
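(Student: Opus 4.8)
The plan rests on two observations. First, the twisted martingale difference $\Delta_Q f$ has vanishing $\mu$-integral: unwinding the definition of $\Delta_Q$, each term $\tfrac{\langle f\rangle_{Q'}}{\langle b^T_{(Q')^a}\rangle_{Q'}}b^T_{(Q')^a}1_{Q'}$ integrates to $\int_{Q'}f\,d\mu$, the subtracted term integrates (after summing over $Q'\in\textup{ch}(Q)$) to $\langle f\rangle_Q\mu(Q)$, and these cancel. Second, $\operatorname{spt}\Delta_Q f\subset Q\subset R_Q$ while $1_{R_Q^c}\Delta_R g$ is supported in $R\setminus R_Q$, so the two supports are disjoint and the kernel representation of $T$ is available. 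Combining these, I would write, for $x\in R\setminus R_Q$,
\[
T(\Delta_Q f)(x)=\int_Q K(x,y)\,\Delta_Q f(y)\,d\mu(y)=\int_Q\big[K(x,y)-K(x,c_Q)\big]\,\Delta_Q f(y)\,d\mu(y).
\]

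Next I would use goodness. Since $Q$ is good with respect to $R$ and so does not meet $\textup{sk}\,R$, it lies in a single child $R_Q\in\textup{ch}(R)$ with $d(Q,R_Q^c)\ge d(Q,\textup{sk}\,R)>\ell(Q)^\gamma\ell(R)^{1-\gamma}$. In the regime where the lemma is applied ($\ell(Q)\le 2^{-r}\ell(R)$ with $r$ large), this forces $|x-y|\ge d(Q,R_Q^c)\ge 2|y-c_Q|$ and $|x-y|\gtrsim|x-c_Q|$ for $y\in Q$, $x\in R\setminus R_Q$, so the Hölder estimate of $K$ in the second variable gives $|K(x,y)-K(x,c_Q)|\lesssim\ell(Q)^\alpha|x-c_Q|^{-m-\alpha}$, whence
\[
|T(\Delta_Q f)(x)|\lesssim\frac{\ell(Q)^\alpha}{|x-c_Q|^{m+\alpha}}\,\|\Delta_Q f\|_{L^1(\mu)},\qquad x\in R\setminus R_Q.
\]
Since $|x-c_Q|>\ell(Q)^\gamma\ell(R)^{1-\gamma}$ uniformly on $R\setminus R_Q$ and $\gamma(m+\alpha)=\alpha/2$, I would then integrate against $|1_{R_Q^c}\Delta_R g|$ and apply Cauchy--Schwarz on each factor, using $\|\Delta_Q f\|_{L^1(\mu)}\le\mu(Q)^{1/2}\|\Delta_Q f\|_{L^2(\mu)}$ and $\int_R|\Delta_R g|\,d\mu\le\mu(R)^{1/2}\|\Delta_R g\|_{L^2(\mu)}$, to arrive at
\[
|\langle T(\Delta_Q f),1_{R_Q^c}\Delta_R g\rangle|\lesssim\Big(\frac{\ell(Q)}{\ell(R)}\Big)^{\alpha/2}\frac{\mu(Q)^{1/2}\mu(R)^{1/2}}{\ell(R)^m}\,\|\Delta_Q f\|_{L^2(\mu)}\|\Delta_R g\|_{L^2(\mu)}.
\]
The last step is purely the size condition: $\mu(R_Q)\le\mu(R)\lesssim\ell(R)^m$ gives $\mu(R)^{1/2}\mu(R_Q)^{1/2}\le\mu(R)\lesssim\ell(R)^m$, i.e.\ $\mu(R)^{1/2}\ell(R)^{-m}\lesssim\mu(R_Q)^{-1/2}$, which converts the previous display into the claimed bound with $B_{QR}=(\ell(Q)/\ell(R))^{\alpha/2}(\mu(Q)/\mu(R_Q))^{1/2}$.

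I do not expect a real obstacle here: this is the standard ``nested, paraproduct-free'' off-diagonal estimate, and all its ingredients (the cancellation of $\Delta_Q f$, the smoothness of $K$, the separation coming from goodness) are elementary in this setting. The only points requiring a little care are that the cancellation-plus-Hölder step presupposes $Q$ genuinely smaller than $R$ --- which is exactly the situation of the nested sum where the lemma is used --- and that the final bookkeeping must pass through $\mu(R)\lesssim\ell(R)^m$ rather than any doubling hypothesis, so that the estimate lands precisely on the scale-and-measure ratio $B_{QR}$ that is summable later against the $\Delta_Q f$ and $\Delta_R g$ square functions.
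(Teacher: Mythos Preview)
Your proposal is correct and follows essentially the same approach as the paper. The paper splits $R\setminus R_Q$ into the children $S\in\textup{ch}(R)$, $S\ne R_Q$, and for each invokes the separated-sum estimate (which is precisely your cancellation-of-$\Delta_Q f$ plus H\"older-continuity argument), then converts $\mu(S)^{1/2}\ell(R)^{-m}\lesssim\mu(R_Q)^{-1/2}$ via the size condition; you do the same computation on all of $R\setminus R_Q$ at once without the child-by-child split, arriving at the identical conclusion.
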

\begin{proof}
Let $S \in \textup{ch}(R)$, $S \ne R_Q$. Then $d(Q,S) \ge d(Q, \partial R_Q) \ge \ell(Q)^{\gamma}\ell(R)^{1-\gamma}$. Using this it is easy to see (like in the separated sum) that there holds that
\begin{displaymath}
|\langle T(\Delta_Q f), 1_S\Delta_R g\rangle| \lesssim \Big(\frac{\ell(Q)}{\ell(R)}\Big)^{\alpha/2} \mu(Q)^{1/2} \frac{\mu(S)^{1/2}}{\ell(R)^m} \|\Delta_Q f\|_{L^2(\mu)} \|\Delta_R g\|_{L^2(\mu)}.
\end{displaymath} 
The claim follows from this since
\begin{displaymath}
\frac{\mu(S)^{1/2}}{\ell(R)^m} \lesssim \frac{1}{\ell(R)^{m/2}} \lesssim \mu(R_Q)^{-1/2}. 
\end{displaymath}
\end{proof}

Notice that
\begin{align*}
\sum_{R \in \mathcal{D}^{T^*}} &
 \mathop{\mathop{\sum_{Q \in \mathcal{D}^T_{\textrm{good},\,\ell(R)}}}_{\ell(Q) \le 2^{-r}\ell(R)}}_{Q \subset R}
|\langle T(\Delta_Q f), 1_{R_Q^c}\Delta_R g\rangle| \\ 
&\lesssim \sum_{R \in \mathcal{D}^{T^*}} \mathop{\mathop{\sum_{Q \in \mathcal{D}^T}}_{\ell(Q) \le 2^{-r}\ell(R)}}_{Q \subset R_Q \in \textup{ch}(R)}
B_{QR} \|\Delta_Q f\|_{L^2(\mu)} \|\Delta_R g\|_{L^2(\mu)} \\
&\lesssim \Big( \sum_Q \|\Delta_Q f\|_{L^2(\mu)}^2\Big)^{1/2} \Big( \sum_R \|\Delta_R g\|_{L^2(\mu)}^2\Big)^{1/2}  \lesssim \mu(Q^*)^{1/2}\mu(R^*)^{1/2} \le \mu(\lambda Q_0).
\end{align*}
Here we used Lemma \ref{lem:es2} and the fact that by \cite{NTV1} we have the $\ell^2$ estimate
\begin{displaymath}
 \sum_{R \in \mathcal{D}^{T^*}} \mathop{\mathop{\sum_{Q \in \mathcal{D}^T}}_{\ell(Q) \le 2^{-r}\ell(R)}}_{Q \subset R_Q \in \textup{ch}(R)} B_{QR}x_Q y_R \lesssim \Big( \sum_Q x_Q^2\Big)^{1/2} \Big( \sum_R y_R^2\Big)^{1/2}.
\end{displaymath}
Therefore, we need to only consider
\begin{displaymath}
S_{\textup{good}}' := \mathop{\sum_{R \in \mathcal{D}^{T^*}}}_{\beta(R) < \beta} 
\mathop{\mathop{\sum_{Q \in \mathcal{D}^T_{\textrm{good},\,\ell(R)}:\, \beta(Q) < \beta}}_{\ell(Q) \le 2^{-r}\ell(R)}}_{d(Q, R) \le \ell(Q)^{\gamma}\ell(R)^{1-\gamma}}
\langle T(\Delta_Q f), 1_{R_Q}\Delta_R g\rangle.
\end{displaymath}

\subsubsection*{The case $R_Q^a = R^a$}
Define
\begin{displaymath}
C_{R_Q} := \frac{\langle g \rangle_{R_Q}}{ \langle b^{T^*}_{R_Q^a} \rangle_{R_Q}} -  \frac{\langle g \rangle_{R}}{ \langle b^{T^*}_{R^a} \rangle_{R}}.
\end{displaymath}
Writing $1_{R_Q} = 1 - 1_{R_Q^c}$ we see that
\begin{displaymath}
1_{R_Q}\Delta_R g = C_{R_Q}1_{R_Q} b^{T^*}_{R^a} = C_{R_Q}b^{T^*}_{R^a} - C_{R_Q}1_{R_Q^c}b^{T^*}_{R^a}.
\end{displaymath}
The first part will become part of the paraproduct and we do not touch it further in this subsection.

Notice that now
\begin{displaymath}
|C_{R_Q}| \mu(R_Q) \lesssim \Big| \int_{R_Q} C_{R_Q} b^{T^*}_{R^a}\,d\mu\Big| = \Big| \int_{R_Q} \Delta_R g\,d\mu\Big| \le \mu(R_Q)^{1/2}\|\Delta_R g\|_{L^2(\mu)}.
\end{displaymath}
Therefore, we have using Lemma \ref{lem:es1} that
\begin{align*}
\sum_{R \in \mathcal{D}^{T^*}}&
\mathop{\mathop{\sum_{Q \in \mathcal{D}^T_{\textrm{good},\,\ell(R)}: \,Q \subset R}}_{\ell(Q) \le 2^{-r}\ell(R)}}_{R_Q^a = R^a}
|C_{R_Q}| |\langle T(\Delta_Q f), 1_{R_Q^c}b^{T^*}_{R^a}\rangle| \\
&\lesssim \sum_{R \in \mathcal{D}^{T^*}} \mathop{\mathop{\sum_{Q \in \mathcal{D}^T}}_{\ell(Q) \le 2^{-r}\ell(R)}}_{Q \subset R_Q \in \textup{ch}(R)}
B_{QR} \|\Delta_Q f\|_{L^2(\mu)} \|\Delta_R g\|_{L^2(\mu)} \lesssim \mu(\lambda Q_0).
\end{align*}

\subsubsection*{The case $R_Q^a = R_Q$}
We now write
\begin{align*}
1_{R_Q} \Delta_R g = \Big( \frac{\langle g \rangle_{R_Q}}{ \langle b^{T^*}_{R_Q^a} \rangle_{R_Q}}b^{T^*}_{R_Q^a} -  \frac{\langle g \rangle_{R}}{ \langle b^{T^*}_{R^a} \rangle_{R}}b^{T^*}_{R^a} \Big) +
\frac{\langle g \rangle_{R}}{ \langle b^{T^*}_{R^a} \rangle_{R}}b^{T^*}_{R^a} 1_{R_Q^c}.
\end{align*}
The first part is exactly the same thing that we did not touch previously, and we will not do so here either. It will become part of the paraproduct.

But let us notice that Lemma \ref{lem:es1} again gives that
\begin{align*}
\sum_{R \in \mathcal{D}^{T^*}}&
\mathop{\mathop{\sum_{Q \in \mathcal{D}^T_{\textrm{good},\,\ell(R)}: \,Q \subset R}}_{\ell(Q) \le 2^{-r}\ell(R)}}_{R_Q^a = R_Q} \Big|\langle T(\Delta_Q f), \frac{\langle g \rangle_{R}}{ \langle b^{T^*}_{R^a} \rangle_{R}}b^{T^*}_{R^a} 1_{R_Q^c}\rangle\Big| \\
&\lesssim \sum_{R \in \mathcal{D}^{T^*}} 
 \mathop{\mathop{\sum_{Q \in \mathcal{D}^T}}_{\ell(Q) \le 2^{-r}\ell(R)}}_{Q \subset R_Q \in \textup{ch}(R)} B_{QR} \|\Delta_Q f\|_{L^2(\mu)} \Big( \mathop{\sum_{R' \in \textup{ch}(R)}}_{(R')^a = R'} \mu(R')\Big)^{1/2} |\langle g \rangle_R| \\
 &\lesssim \Big( \sum_Q \|\Delta_Q f\|_{L^2(\mu)}^2\Big)^{1/2} \Big( \sum_R |\langle g\rangle_R|^2  \mathop{\sum_{R' \in \textup{ch}(R)}}_{(R')^a = R'} \mu(R') \Big)^{1/2}\\ &\lesssim \mu(Q^*)\|g\|_{L^2(\mu)} \le \mu(\lambda Q_0).
\end{align*}

\subsubsection*{The paraproduct}
For $Q \in \bigcup_{k \ge r} \mathcal{D}^T_{\textup{good}, \, 2^k\ell(Q)}$ let $\alpha(Q)$ be the smallest $k$ such that $Q \in \mathcal{D}^T_{\textup{good}, \, 2^k\ell(Q)}$.
We are reduced to bounding
\begin{align*}
S_{\textup{par}} := &\mathop{\sum_{R \in \mathcal{D}^{T^*}}}_{\beta(R) < \beta} 
\mathop{\mathop{\sum_{Q \in \mathcal{D}^T_{\textrm{good},\,\ell(R)}:\, \beta(Q) < \beta}}_{\ell(Q) \le 2^{-r}\ell(R)}}_{d(Q, R) \le \ell(Q)^{\gamma}\ell(R)^{1-\gamma}}
\Big\langle T(\Delta_Q f),  \frac{\langle g \rangle_{R_Q}}{ \langle b^{T^*}_{R_Q^a} \rangle_{R_Q}}b^{T^*}_{R_Q^a} -  \frac{\langle g \rangle_{R}}{ \langle b^{T^*}_{R^a} \rangle_{R}}b^{T^*}_{R^a}  \Big\rangle \\
&= \mathop{\sum_{Q \in \bigcup_{k \ge r} \mathcal{D}^T_{\textup{good}, \, 2^k\ell(Q)}:\, \beta(Q) < \beta}}_{Q \subset R^*} \mathop{\mathop{\sum_{R \in \mathcal{D}^{T^*}: \, \beta(R) < \beta}}_{\ell(R) \ge 2^{\alpha(Q)}\ell(Q)}}_{Q \subset R}
\Big\langle T(\Delta_Q f),  \frac{\langle g \rangle_{R_Q}}{ \langle b^{T^*}_{R_Q^a} \rangle_{R_Q}}b^{T^*}_{R_Q^a} -  \frac{\langle g \rangle_{R}}{ \langle b^{T^*}_{R^a} \rangle_{R}}b^{T^*}_{R^a}  \Big\rangle.
\end{align*}
If $Q \in \bigcup_{k \ge r} \mathcal{D}^T_{\textup{good}, \, 2^k\ell(Q)}$ we let $H(Q) \in \mathcal{D}^{T^*}$ be the smallest cube satisfying
$\beta(H(Q)) < \beta$, $\ell(H(Q)) \ge 2^{\alpha(Q)}\ell(Q)$ and $Q \subset H(Q)$. Let $J(Q) = H(Q)_Q$. We have that
\begin{align*}
S_{\textup{par}} &= \mathop{\sum_{Q \in \bigcup_{k \ge r} \mathcal{D}^T_{\textup{good}, \, 2^k\ell(Q)}:\, \beta(Q) < \beta}}_{Q \subset R^*} \Big\langle T(\Delta_Q f), \mathop{\sum_{R \in \mathcal{D}^{T^*}}}_{H(Q) \subset R \subset R^*}
 \frac{\langle g \rangle_{R_Q}}{ \langle b^{T^*}_{R_Q^a} \rangle_{R_Q}}b^{T^*}_{R_Q^a} -  \frac{\langle g \rangle_{R}}{ \langle b^{T^*}_{R^a} \rangle_{R}}b^{T^*}_{R^a}  \Big\rangle \\
 &= \mathop{\sum_{Q \in \bigcup_{k \ge r} \mathcal{D}^T_{\textup{good}, \, 2^k\ell(Q)}:\, \beta(Q) < \beta}}_{Q \subset R^*} \Big\langle 
 T(\Delta_Q f),
 \frac{\langle g \rangle_{J(Q)}}{ \langle b^{T^*}_{J(Q)^a} \rangle_{J(Q)}}b^{T^*}_{J(Q)^a} -  \frac{\langle g \rangle_{R^*}}{ \langle b^{T^*}_{R^{*a}} \rangle_{R^*}}b^{T^*}_{R^{*a}}  \Big\rangle. 
\end{align*}

We may consider the following general situation.
We are given a collection $\mathcal{G} \subset \mathcal{D}^T$ so that to every cube $Q \in \mathcal{G}$ there holds $\beta(Q) < \beta$, and there is associated a unique cube $S(Q) \in \mathcal{D}^{T^*}$ satisfying $Q \subset S(Q)$.
Our object is to bound
\begin{displaymath}
P(f,g) := \Big| \sum_{Q \in \mathcal{G}} \Big\langle T(\Delta_Q f), \frac{\langle g \rangle_{S(Q)}}{\langle b^{T^*}_{S(Q)^a} \rangle_{S(Q)}} b^{T^*}_{S(Q)^a} \Big\rangle\Big|.
\end{displaymath}

To this end, we first define for $F \in \mathcal{F}_{R^*}$ and $Q \in \mathcal{D}^T$ that
\begin{displaymath}
\epsilon_Q(F) = \left\{ \begin{array}{ll} 
0 & \textup{ if } Q \not \in \mathcal{G}, \\
0 & \textup{ if } Q \in \mathcal{G} \textup{ and } S(Q)^a \ne F, \\
\frac{\langle g \rangle_{S(Q)}}{\langle b^{T^*}_{F} \rangle_{S(Q)}} & \textup{ if } Q \in \mathcal{G} \textup{ and } S(Q)^a = F.
\end{array} \right.
\end{displaymath}
Notice that $|\epsilon_Q(F)| \lesssim |\langle g \rangle_{S(Q)}| \le 1$. We have that
\begin{align*}
\Big\| \sum_{Q \in \mathcal{D}^T}  \epsilon_Q(F) \Delta_Q f\Big\|_{L^2(\mu)} &\le \sum_{j=0}^{\beta-1} \Big( \sum_{K \in \mathcal{F}^j_{Q^*}} \Big\| \mathop{\sum_{Q \in \mathcal{D}^T}}_{Q^a = K} \epsilon_Q(F) \Delta_Q f\ \Big\|_{L^2(\mu)}^2\Big)^{1/2}
\\ &\lesssim \sum_{j=0}^{\beta-1} \Big( \sum_{K \in \mathcal{F}^j_{Q^*}} \mu(F \cap K) \Big)^{1/2} \le \beta \mu(F)^{1/2}.
\end{align*}
But as $\beta$ is already fixed we do not need to mind about this dependence.
Using this we now have that
\begin{align*}
P(f,g) &= \Big| \mathop{\sum_{F \in \mathcal{D}^{T^*}}}_{F^a = F} \Big\langle \mathop{\sum_{Q \in \mathcal{G}}}_{S(Q)^a = F}  \frac{\langle g \rangle_{S(Q)}}{\langle b^{T^*}_{F} \rangle_{S(Q)}} \Delta_Q f, 1_F T^*  b^{T^*}_{F} \Big\rangle \Big| \\
&\le \mathop{\sum_{F \in \mathcal{D}^{T^*}}}_{F^a = F} \Big\| \sum_{Q \in \mathcal{D}^T}  \epsilon_Q(F) \Delta_Q f\Big\|_{L^2(\mu)} \| 1_F T^*  b^{T^*}_{F} \|_{L^2(\mu)} \\
&\lesssim  \mathop{\sum_{F \in \mathcal{D}^{T^*}}}_{F^a = F}  \mu(F) \lesssim \mu(R^*) \le \mu(\lambda Q_0).
\end{align*}

In particular, we have shown that
\begin{displaymath}
|S_{\textup{par}}| \lesssim \mu(\lambda Q_0).
\end{displaymath}
This completes our proof of the fact that
\begin{displaymath}
|S_{\textup{good}}| \lesssim \mu(\lambda Q_0).
\end{displaymath}

\section{The diagonal}
For every $Q \in \mathcal{D}^T$ and $R \in \mathcal{D}^{T^*}$ we write $Q = \bigcup_{i=1}^{2^n} Q_i$ and $R = \bigcup_{j=1}^{2^n} R_j$, where $Q_i \in \textup{ch}(Q)$ and $R_j \in \textup{ch}(R)$.
We then fix two indices $i$ and $j$. We write $Q \sim R$ to mean $\ell(Q) \sim \ell(R)$ and $d(Q,R) \lesssim \min(\ell(Q), \ell(R))$. Notice that $\#\{Q:\, Q \sim R\} \lesssim 1$. We want to bound a sum of the form
\begin{displaymath}
S = \sum_R \sum_{Q: \, Q \sim R} A_{Q,i} |\langle T(1_{Q_i} u_{Q,i}), 1_{R_j} v_{R,j}\rangle| B_{R, j},
\end{displaymath}
where $A_{Q,i} = A_{Q,i}(f), B_{Q,j} = B_{Q,j}(g) \ge 0$ are constants and $u_{Q,i}, v_{R,j}$ are functions such that
\begin{align}
\label{eq:Au} \sum_Q \big[\|1_{Q_i} M_{\mu} u_{Q,i}\|_{L^2(\mu)}^2 + \|1_{Q_i} T u_{Q,i}\|_{L^2(\mu)}^2\big] A_{Q,i}^2 & \lesssim \mu(\lambda Q_0) \\
\label{eq:Bv} \sum_R \big[\|1_{R_j} M_{\mu}v_{R,j}\|_{L^2(\mu)}^2 + \|1_{R_j} T^*v_{R,j}\|_{L^2(\mu)}^2\big] B_{R,j}^2 &\lesssim \mu(\lambda Q_0).
\end{align}
Here we recall that $M_{\mu}$ is the centred maximal function with respect to the measure $\mu$.

In practice, we shall consider $S$ with the choice that $(A_{Q,i}, u_{Q,i})$ is either
\begin{displaymath}
A_{Q,i} = \left\{ \begin{array}{ll} \Big|\frac{\langle f\rangle_{Q_i}}{ \langle b^T_{Q_i^a}\rangle_{Q_i} }- \frac{\langle f\rangle_{Q}}{ \langle b^T_{Q^a}\rangle_{Q} }\Big| & \textrm{if } Q_i^a = Q^a \\
0 & \textrm{if } Q_i^a = Q_i \end{array} \right., \,\,\, u_{Q,i} = b^T_{Q_i^a}
\end{displaymath}
or
\begin{displaymath}
A_{Q,i} = \left\{ \begin{array}{ll} 0  & \textrm{if } Q_i^a = Q^a \\
\Big|\frac{\langle f\rangle_{Q_i}}{ \langle b^T_{Q_i^a}\rangle_{Q_i} }\Big| & \textrm{if } Q_i^a = Q_i \end{array} \right., \,\,\, u_{Q,i} = b^T_{Q_i^a}
\end{displaymath}
or
\begin{displaymath}
A_{Q,i} = \left\{ \begin{array}{ll} 0  & \textrm{if } Q_i^a = Q^a \\
\Big|\frac{\langle f\rangle_{Q}}{ \langle b^T_{Q^a}\rangle_{Q} }\Big| & \textrm{if } Q_i^a = Q_i \end{array} \right., \,\,\, u_{Q,i} = b^T_{Q^a}.
\end{displaymath}
Analogous choices are made for $(B_{R, j}, v_{R,j})$.This means that we consider nine different sums $S$. But to bound a sum of the form $S$ we shall need only the fact that \eqref{eq:Au} and \eqref{eq:Bv} hold, which is true with all these choices:
\begin{lem}
The inequality \eqref{eq:Au} holds with all the above three choices for $(A_{Q,i}, u_{Q,i})$.
\end{lem}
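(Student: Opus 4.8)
The plan is to treat the three choices separately, in order of increasing difficulty, the common input being the stopping-time structure. Recall that any $Q' \in \mathcal{D}^T$ with $Q' \subsetneq (Q')^a$ fails all three conditions defining $\mathcal{F}_{Q^*}$ relative to $b^T_{(Q')^a}$, so that $|\langle b^T_{(Q')^a} \rangle_{Q'}| \ge 1/2$, $\langle |M_\mu b^T_{(Q')^a}|^2 \rangle_{Q'} \le 16A^2 \|M_\mu\|^2$ and $\langle |T b^T_{(Q')^a}|^2 \rangle_{Q'} \le 16AB$. I will freely use the Carleson estimate $\sum_{F \in \mathcal{F}_{Q^*}} \mu(F) \lesssim \mu(Q^*) \le \mu(\lambda Q_0)$, the square function bound $\sum_Q \|\Delta_Q f\|_{L^2(\mu)}^2 \lesssim \mu(Q^*)$ (available since $|f| \le 1$), the $L^2(\mu)$-boundedness of $M_\mu$, and properties (3)--(4) of Theorem~\ref{thm:main}.

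For the second choice, $A_{Q,i} \neq 0$ forces $Q_i^a = Q_i$, hence $u_{Q,i} = b^T_{Q_i}$ and, after normalising $\langle b^T_{Q_i} \rangle_{Q_i} = 1$, $A_{Q,i} = |\langle f \rangle_{Q_i}| \le 1$. Then $\|1_{Q_i} M_\mu b^T_{Q_i}\|_{L^2(\mu)}^2 \le \|M_\mu\|^2 \|b^T_{Q_i}\|_{L^2(\mu)}^2 \lesssim \mu(Q_i)$ by property (3), while $\|1_{Q_i} T b^T_{Q_i}\|_{L^2(\mu)}^2 \lesssim \mu(Q_i)$ by property (4). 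For a fixed index $i$ the map $Q \mapsto Q_i$ sends the surviving cubes injectively into $\mathcal{F}_{Q^*}$, so the whole sum is $\lesssim \sum_{F \in \mathcal{F}_{Q^*}} \mu(F) \lesssim \mu(\lambda Q_0)$.

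For the first choice, $A_{Q,i} \neq 0$ forces $Q_i^a = Q^a =: F$; as $Q_i \subsetneq F$, the cube $Q_i$ fails all three stopping conditions for $b^T_F$. From $|\langle b^T_F \rangle_{Q_i}| \ge 1/2$ and Cauchy--Schwarz we get $\mu(Q_i) \lesssim \int_{Q_i} |b^T_F|^2 \,d\mu$, and the other two conditions give $\|1_{Q_i} M_\mu b^T_F\|_{L^2(\mu)}^2 \lesssim \mu(Q_i)$ and $\|1_{Q_i} T b^T_F\|_{L^2(\mu)}^2 \lesssim \mu(Q_i)$. On the other hand, the definition of $\Delta_Q f$ shows that on $Q_i$ one has $\Delta_Q f = c\, b^T_F 1_{Q_i}$ with $|c| = A_{Q,i}$, so, by disjointness of the children's supports, $A_{Q,i}^2 \int_{Q_i} |b^T_F|^2 \,d\mu \le \|\Delta_Q f\|_{L^2(\mu)}^2$. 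Hence
\[
\big[\|1_{Q_i} M_\mu u_{Q,i}\|_{L^2(\mu)}^2 + \|1_{Q_i} T u_{Q,i}\|_{L^2(\mu)}^2\big] A_{Q,i}^2 \lesssim \mu(Q_i) A_{Q,i}^2 \lesssim \|\Delta_Q f\|_{L^2(\mu)}^2 ,
\]
and summing over $Q$ together with the square function estimate yields $\lesssim \mu(\lambda Q_0)$.

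The third choice is the delicate one and I expect it to be the main obstacle. Again $A_{Q,i} \neq 0$ forces $Q_i^a = Q_i$, but now $u_{Q,i} = b^T_{Q^a}$, and a \emph{per-cube} bound $\|1_{Q_i} M_\mu b^T_{Q^a}\|_{L^2(\mu)}^2 \lesssim \mu(Q_i)$ is simply false: $Q_i$ may be a stopping cube only because of condition (1) (small average of $b^T_{Q^a}$), so $M_\mu b^T_{Q^a}$ or $T b^T_{Q^a}$ can be large on $Q_i$. The remedy is to group the sum by $F := Q^a \in \mathcal{F}_{Q^*}$. First, $A_{Q,i} \lesssim |\langle f \rangle_Q| \le 1$ (if $Q \subsetneq F$ use $|\langle b^T_F \rangle_Q| \ge 1/2$; if $Q = F$ use $\langle b^T_F \rangle_F = 1$). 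For a fixed $F$ the cubes $Q_i$ that occur are pairwise disjoint subcubes of $F$ --- if two of them were nested one would obtain either $Q^a \subsetneq F$ or a cube strictly between $Q$ and its child $Q_i$, a contradiction --- so, pulling out $A_{Q,i}^2 \lesssim 1$,
\[
\sum_{Q\,:\, Q^a = F,\, Q_i \in \mathcal{F}_{Q^*}} \|1_{Q_i} M_\mu b^T_F\|_{L^2(\mu)}^2 A_{Q,i}^2 \lesssim \int_F |M_\mu b^T_F|^2 \,d\mu \le \|M_\mu\|^2 \|b^T_F\|_{L^2(\mu)}^2 \lesssim \mu(F),
\]
and likewise $\sum_{Q\,:\, Q^a = F} \|1_{Q_i} T b^T_F\|_{L^2(\mu)}^2 A_{Q,i}^2 \lesssim \|1_F T b^T_F\|_{L^2(\mu)}^2 \lesssim \mu(F)$ by property (4). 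Summing over $F \in \mathcal{F}_{Q^*}$ and invoking the Carleson estimate completes the argument. The dual inequality \eqref{eq:Bv} for $(B_{R,j}, v_{R,j})$ and $T^*$ is proved in exactly the same way.
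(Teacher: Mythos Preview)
Your proof is correct and is essentially a detailed execution of what the paper only sketches: the paper's proof merely says that the argument is the same as for $\sum_Q \|\Delta_Q f\|_{L^2(\mu)}^2 \lesssim \mu(Q^*)$, using additionally that the stopping time controls $M_\mu b^T_Q$ and $Tb^T_Q$. Your treatment of the three cases---bounding the second by the Carleson sum over $\mathcal{F}_{Q^*}$, the first by $\|\Delta_Q f\|_{L^2(\mu)}^2$ via the stopping inequalities on $Q_i$, and the third by grouping over $F=Q^a$ and using disjointness of the $Q_i\in\mathcal{H}_F$---is exactly the intended elaboration.
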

\begin{proof}
This is proved in exactly the same way as the inequality $\sum_Q \|\Delta_Q f\|_{L^2(\mu)}^2 \lesssim \mu(Q^*)$. The proof only needs the additional fact that we have also done a stopping time
with respect to the properties
\begin{displaymath}
\int_Q |M_{\mu} b^T_Q|^2\,d\mu \lesssim \mu(Q) \qquad \textup{and} \qquad \int_Q |T b^T_Q|^2\,d\mu \lesssim \mu(Q).
\end{displaymath}
\end{proof}

\subsection{First surgery: the $\theta$-surgery}
Suppose for convenience that $\ell(Q_i) \le \ell(R_j)$. Let $\theta$ be a small parameter. We perform surgery on $(Q_i, R_j)$ with the parameter $\theta$.
Let $j(\theta)\in\Z$ be such that $2^{-21}\theta\leq 2^{j(\theta)}<2^{-20}\theta$. Let $\mathcal{D}^*$ be yet another random grid in $\R^n$, independent of all other grids considered. Let $G:=\{g\in\mathcal{D}^*:\ell(g)=2^{j(\theta)}\ell(Q_i)\}$, and for $x\in \R^n$, let $G(x)$ be the unique cube in $G$ that contains $x$. We define
\begin{equation*}
  Q_{i,\partial}:=\{x\in Q_i:d(G(x),\partial R_j)<\theta\ell(R_j)/2\}\cup\{x\in Q_i\cap R_j: d(x,\partial G(x))<\theta\ell(G(x))\}.
\end{equation*}
Thus points in $Q_{i,\partial}$ belong to $Q_i$, and are either close to the boundary of $R_j$, or to the boundary of the grid $G$. The set $Q_{i,\partial}$ depends on the set $R_j$ as well. However, we have
\begin{equation}\label{eq:Ibad}
\begin{split}
  Q_{i,\partial}\subset Q_{i,\textup{bad}}
  :=Q_i\cap\Big[ &\bigcup_{\substack{R'\in\mathcal{D}^{T^*}\\ 2^{-r}\ell(Q_i)\leq\ell(R')\leq 2^r\ell(Q_i)}}\{x:d(x,\partial R')<\theta\ell(R')\} \\
    &\cup\bigcup_{\substack{g\in\mathcal{D}^*\\ \ell(g)=2^{j(\theta)}\ell(Q_i)}}\{x:d(x,\partial g)<\theta\ell(g)\}\Big]\\ & =: Q_i \cap \Big[\bigcup_{a=\textup{gen}(Q_i)-r}^{\textup{gen}(Q_i)+r} \partial^{D^{T^*}}_a(\theta)
    \cup \partial^{D^*}_{\textup{gen}(Q_i)-j(\theta)}(\theta)\Big] \\
    &=: Q_i \cap [\partial^{D^{T^*}}_{\sim \textup{gen}(Q_i)}(\theta) \cup \partial^{D^*}_{\textup{gen}(Q_i)-j(\theta)}(\theta)],
\end{split}
\end{equation}
which depends only on $Q_i$ and the grids $\mathcal{D}^{T^*}$ and $\mathcal{D}^*$. One should keep in mind that in what follows $Q_{i,\textup{bad}} = Q_{i,\textup{bad}}(\theta)$.

We set
\begin{equation*}
  Q_{i,\textup{sep}}:=Q_i\setminus (Q_{i,\partial}\cup R_j),
\end{equation*}
the part of $Q_i$ strictly separated from $R_j$. Finally, we have
\begin{equation*}
  Q_{i,\Delta} := Q_i\setminus(Q_{i,\partial}\cup Q_{i,\textup{sep}})
  =\bigcup_k L_k,
\end{equation*}
where each $L_k$ is of the form $L_k=(1-\theta)g\cap Q_i\cap R_j$ for some $g\in G$, and $\#k \lesssim_{\theta} 1$.
In fact, $L_k$ is of the form $L_k=(1-\theta)g$ unless it is close to the boundary of $Q_i$; it cannot be close to the boundary of $R_j$, since such cubes were already subtracted in the $Q_{i,\partial}$ component.

We have the partition
\begin{equation*}
  Q_i=Q_{i,\textup{sep}}\cup Q_{i,\partial}\cup Q_{i,\Delta} = Q_{i,\textup{sep}}\cup Q_{i,\partial}\cup \bigcup_k L_k,
\end{equation*}
and in a completely analogous manner also
\begin{equation*}
  R_j=R_{j,\textup{sep}}\cup R_{j,\partial}\cup R_{j, \Delta} =  R_{j,\textup{sep}}\cup R_{j,\partial} \cup  \bigcup_s L_s.
\end{equation*}
A key observation is that all $L_k\subset Q_i\cap R_j$ appearing in the first union are cubes (of the form $(1-\theta)g$ for $g\in G$) unless they are close to $\partial Q_i$, and they are never close to $\partial R_j$, while the $L_s$ in the second union are cubes unless they are close to $\partial R_j$, and they are never close to $\partial Q_i$. Thus, all $L_k=L_s$ that appear in both unions are cubes and then $5L_k \subset Q_i \cap R_j$.

\subsection{Reduction to a deeply diagonal term}
Using the above $\theta$-surgery we want to reduce to a term of the form $\langle T(1_H u_{Q,i}), 1_H v_{R,j}\rangle$, where $H = L_k = L_s$ is a cube with $5H \subset Q_i \cap R_j$ and $\#H \le C(\theta)$.
This term will then be split using a different $\sigma$-surgery (at the end one will first choose $\theta$ small, and then $\sigma = \sigma(\theta)$ small depending on $\theta$). But let us first do the actual reduction.
 
We write
\begin{align*}
\langle T(1_{Q_i} u_{Q,i}), 1_{R_j} v_{R,j}\rangle &=  \sum_{\beta \in \{\textup{sep},\,\partial\}} \langle T(1_{Q_i} u_{Q,i}), 1_{R_{j,\beta}} v_{R,j}\rangle \\
&+ \sum_{\alpha \in \{\textup{sep},\,\partial\}}   \langle T(1_{Q_{i,\alpha}} u_{Q,i}), 1_{R_{j,\Delta}} v_{R,j}\rangle \\
&+ \sum_{k\ne s} \langle T(1_{L_k} u_{Q,i}), 1_{L_s} v_{R,j}\rangle \\
&+ \sum_{k = s} \langle T(1_{L_k} u_{Q,i}), 1_{L_k} v_{R,j}\rangle.
\end{align*}
If $\alpha = \textup{sep}$ or $\beta = \textup{sep}$ or $k \ne s$, then the corresponding pairing is seen to be dominated by
\begin{displaymath}
C(\theta) \|1_{Q_i} u_{Q,i}\|_{L^2(\mu)} \|1_{R_j} v_{R,j}\|_{L^2(\mu)}
\end{displaymath}
using the size estimate of the kernel $K$ together with the fact that the sets are separated by $c(\theta)\ell(Q_i) \sim c(\theta)\ell(R_j)$.
In the case $k \ne s$ a further large dependence on $\theta$ is gained from the summation $\sum_{k \ne s} 1$.

The sum of the cases $\alpha = \partial$ and $\beta = \partial$ is dominated by
\begin{displaymath}
\|T\|( \|1_{Q_{i,\textup{bad}}} u_{Q,i}\|_{L^2(\mu)} \|1_{R_j} v_{R,j}\|_{L^2(\mu)} + \|1_{Q_i} u_{Q,i}\|_{L^2(\mu)} \|1_{R_{j,\textup{bad}}} v_{R,j}\|_{L^2(\mu)}].
\end{displaymath}
All in all, we have the estimate
\begin{align*}
|\langle T(1_{Q_i} u_{Q,i}), 1_{R_j} v_{R,j}\rangle| &\le C(\theta) \|1_{Q_i} u_{Q,i}\|_{L^2(\mu)} \|1_{R_j} v_{R,j}\|_{L^2(\mu)} \\
&+ \|T\| \|1_{Q_{i,\textup{bad}}} u_{Q,i}\|_{L^2(\mu)} \|1_{R_j} v_{R,j}\|_{L^2(\mu)} \\
&+ \|T\| \|1_{Q_i} u_{Q,i}\|_{L^2(\mu)} \|1_{R_{j,\textup{bad}}} v_{R,j}\|_{L^2(\mu)} \\
&+ \sum_{H := L_k = L_s} |\langle T(1_{H} u_{Q,i}), 1_{H} v_{R,j}\rangle|.
\end{align*}
We will now fix one such $H$ and estimate $|\langle T(1_{H} u_{Q,i}), 1_{H} v_{R,j}\rangle|$ with a bound independent of $H$.

\subsection{Second surgery: the $\sigma$-surgery}
We continue to split
\begin{align*}
\langle T(1_{H} u_{Q,i}), 1_{H} v_{R,j}\rangle &= \langle Tu_{Q, i}, 1_H v_{R, j}\rangle \\
&- \langle T(1_{\R^n \setminus 5H} u_{Q, i}), 1_H v_{R, j}\rangle\\
&- \langle T(1_{5H \setminus (1+\sigma)H} u_{Q, i}), 1_H v_{R, j}\rangle \\
&- \langle T(1_{(1+\sigma)H \setminus H} u_{Q, i}), 1_H v_{R, j}\rangle.
\end{align*}
We have that
\begin{displaymath}
|\langle Tu_{Q, i}, 1_H v_{R, j}\rangle| \le \|1_{Q_i} Tu_{Q, i}\|_{L^2(\mu)} \|1_{R_j} v_{R, j}\|_{L^2(\mu)}.
\end{displaymath}
Using separation and the fact that $5H \subset Q_i \cap R_j$ we see that
\begin{displaymath}
|\langle T(1_{5H \setminus (1+\sigma)H} u_{Q, i}), 1_H v_{R, j}\rangle| \le C(\sigma) \|1_{Q_i} u_{Q, i}\|_{L^2(\mu)} \|1_{R_j} v_{R, j}\|_{L^2(\mu)}.
\end{displaymath}
Moreover, there holds that
\begin{displaymath}
|\langle T(1_{(1+\sigma)H \setminus H} u_{Q, i}), 1_H v_{R, j}\rangle| \le \|T\| \|1_{ \partial^{D^*}_{\textup{gen}(Q_i)-j(\theta)}(\sigma)} 1_{Q_i} u_{Q, i}\|_{L^2(\mu)} \|1_{R_j} v_{R, j}\|_{L^2(\mu)}.
\end{displaymath}

The term $\langle T(1_{\R^n \setminus 5H} u_{Q, i}), 1_H v_{R, j}\rangle$ still requires further splitting. We write
\begin{align*}
\langle T(1_{\R^n \setminus 5H} u_{Q, i}), 1_H v_{R, j}\rangle &= \langle T(1_{\R^n \setminus 5H} u_{Q, i}) - \langle b^{T^*}_H/\mu(H), T(1_{\R^n \setminus 5H} u_{Q, i}) \rangle, 1_H v_{R, j}\rangle \\
&+ \langle b^{T^*}_H, T(1_{\R^n \setminus 5H} u_{Q, i}) \rangle \frac{1}{\mu(H)} \int_H v_{R, j}\,d\mu.
\end{align*}
\begin{lem}
There holds that
\begin{align*}
|\langle T(1_{\R^n \setminus 5H} u_{Q, i}) - \langle b^{T^*}_H/\mu(H), T(1_{\R^n \setminus 5H}& u_{Q, i}) \rangle, 1_H v_{R, j}\rangle| \\  &\lesssim \|1_{Q_i} M_{\mu} u_{Q,i}\|_{L^2(\mu)}  \|1_{R_j} v_{R, j}\|_{L^2(\mu)}.
\end{align*}
\end{lem}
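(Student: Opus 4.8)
Write $\phi:=1_{\R^n\setminus 5H}u_{Q,i}$ and let $c:=\langle b^{T^*}_H/\mu(H),T\phi\rangle$, so that by $\supp b^{T^*}_H\subset H$ the scalar $c$ is just the $b^{T^*}_H$‑weighted average $\mu(H)^{-1}\int_H b^{T^*}_H\,T\phi\,d\mu$. Since $\supp\phi\subset\supp u_{Q,i}$ is bounded and $\supp\phi\subset\R^n\setminus 5H$ is disjoint from $H$, the integral $T\phi(x)=\int K(x,y)\phi(y)\,d\mu(y)$ converges absolutely for every $x\in H$ and represents $T\phi$ there; in particular $c$ is a well‑defined finite scalar. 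As $H\subset R_j$, Cauchy--Schwarz against $1_Hv_{R,j}$ reduces the lemma to the pointwise‑on‑$H$ estimate that produces
\begin{displaymath}
\|1_H(T\phi-c)\|_{L^2(\mu)}\lesssim\|1_{Q_i}M_{\mu}u_{Q,i}\|_{L^2(\mu)} .
\end{displaymath}

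The heart of the matter is the almost‑constancy of $T\phi$ on $H$: for all $x,z\in H$,
\begin{displaymath}
|T\phi(x)-T\phi(z)|\lesssim M_{\mu}u_{Q,i}(z).
\end{displaymath}
For $y\notin 5H$ one has $|z-y|\ge 2\ell(H)$ while $|x-z|\le\diam H\lesssim\ell(H)$, so the Hölder estimate for $K$ gives $|K(x,y)-K(z,y)|\lesssim \ell(H)^{\alpha}|z-y|^{-m-\alpha}$, hence
\begin{displaymath}
|T\phi(x)-T\phi(z)|\lesssim \ell(H)^{\alpha}\int_{|z-y|\ge 2\ell(H)}\frac{|u_{Q,i}(y)|}{|z-y|^{m+\alpha}}\,d\mu(y).
\end{displaymath}
Splitting the domain into the dyadic annuli $\{2^{k+1}\ell(H)\le|z-y|<2^{k+2}\ell(H)\}$, $k\ge 0$, bounding $|z-y|^{-m-\alpha}$ from above on each annulus, and using $\int_{B(z,2^{k+2}\ell(H))}|u_{Q,i}|\,d\mu\le\mu(B(z,2^{k+2}\ell(H)))\,M_{\mu}u_{Q,i}(z)\lesssim (2^{k}\ell(H))^{m}M_{\mu}u_{Q,i}(z)$ (the growth condition $\mu(B(x,r))\lesssim r^{m}$), the $k$‑th annulus contributes $\lesssim 2^{-k\alpha}\ell(H)^{-\alpha}M_{\mu}u_{Q,i}(z)$, and summing the geometric series gives the claim.

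To conclude, I would use the normalisation $\langle b^{T^*}_H\rangle_H=1$, i.e.\ $\mu(H)^{-1}\int_H b^{T^*}_H\,d\mu=1$: for every $x\in H$,
\begin{displaymath}
T\phi(x)-c=\frac{1}{\mu(H)}\int_H b^{T^*}_H(z)\big(T\phi(x)-T\phi(z)\big)\,d\mu(z),
\end{displaymath}
so the oscillation bound and Cauchy--Schwarz in $z$ yield
\begin{displaymath}
|T\phi(x)-c|\lesssim \frac{1}{\mu(H)}\int_H|b^{T^*}_H(z)|\,M_{\mu}u_{Q,i}(z)\,d\mu(z)\le\frac{\|b^{T^*}_H\|_{L^2(\mu)}}{\mu(H)}\,\|1_HM_{\mu}u_{Q,i}\|_{L^2(\mu)} .
\end{displaymath}
By hypothesis (3) of Theorem \ref{thm:main}, $\|b^{T^*}_H\|_{L^2(\mu)}^2\lesssim\mu(H)$, so this bound is $\lesssim\mu(H)^{-1/2}\|1_HM_{\mu}u_{Q,i}\|_{L^2(\mu)}$ and is uniform in $x\in H$; squaring, integrating over $H$, and using $H\subset Q_i$ gives $\|1_H(T\phi-c)\|_{L^2(\mu)}\lesssim\|1_HM_{\mu}u_{Q,i}\|_{L^2(\mu)}\le\|1_{Q_i}M_{\mu}u_{Q,i}\|_{L^2(\mu)}$, and then Cauchy--Schwarz against $1_Hv_{R,j}\in L^2(\mu)$ (using $H\subset R_j$) finishes the proof, with a bound independent of $H$.

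The only genuinely delicate point is the oscillation estimate, and there the sole technicality is ensuring the Hölder regularity of $K$ is applicable throughout $\R^n\setminus 5H$, i.e.\ that $|z-y|\ge 2|x-z|$ there; this either forces one to read ``$5H$'' as a sufficiently large fixed dilate (harmless, as the surgery constants are at our disposal) or to treat the $O_n(1)$ nearby annuli using only the size estimate $|K(x,y)|+|K(z,y)|\lesssim|z-y|^{-m}$, which gives the same bound. Everything else is routine Cauchy--Schwarz together with the a priori facts $\langle b^{T^*}_H\rangle_H=1$, $\|b^{T^*}_H\|_{L^2(\mu)}^2\lesssim\mu(H)$ and the growth condition on $\mu$.
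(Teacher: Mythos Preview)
Your proof is correct and follows essentially the same approach as the paper: both arguments hinge on the oscillation estimate $|T\phi(x)-T\phi(z)|\lesssim M_\mu u_{Q,i}$ for $x,z\in H$, obtained from the H\"older regularity of $K$ and the growth condition on $\mu$, and then use the normalisation $\langle b^{T^*}_H\rangle_H=1$ together with $\|b^{T^*}_H\|_{L^2(\mu)}^2\lesssim\mu(H)$ to pass from the $b^{T^*}_H$-weighted average back to a pointwise bound. The only cosmetic difference is that the paper places the maximal function at the fixed point $x$ (so that $\langle |b^{T^*}_H|\rangle_H\lesssim 1$ suffices and one gets the pointwise bound $|T\phi(x)-c|\lesssim M_\mu u_{Q,i}(x)$ directly), whereas you place it at the integration variable $z$ and then use Cauchy--Schwarz in $z$; both routes are equivalent and yield the same conclusion.
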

\begin{proof}
Let $\Phi := T(1_{\R^n \setminus 5H} u_{Q, i})$.
We need to bound
\begin{displaymath}
\int |1_H(x)v_{R,j}(x)||\Phi (x) - \langle b^{T^*}_H/\mu(H), \Phi \rangle|\,d\mu(x).
\end{displaymath}
For $x \in H$ we have that
\begin{align*}
|\Phi (x) - \langle b^{T^*}_H/\mu(H), \Phi \rangle| \le \frac{1}{\mu(H)} \int_H |b^{T^*}_H(y)| |\Psi(x) - \Psi(y)|\,d\mu(y).
\end{align*}
But for $x, y \in H$ there holds that
\begin{displaymath}
|\Psi(x) - \Psi(y)| \lesssim \ell(H)^{\alpha} \int_{|x-z| \ge c\ell(H)} \frac{|u_{Q,i}(z)|}{|x-z|^{m+\alpha}}\,d\mu(z) \lesssim M_{\mu} u_{Q,i}(x).
\end{displaymath}
Therefore, for $x \in H$ we have that
\begin{displaymath}
|\Phi (x) - \langle b^{T^*}_H/\mu(H), \Phi \rangle| \lesssim M_{\mu} u_{Q,i}(x)
\end{displaymath}
using which we see that
\begin{align*}
\int |1_H(x)v_{R,j}(x)|&|\Phi (x) - \langle b^{T^*}_H/\mu(H), \Phi \rangle|\,d\mu(x) \\ &\lesssim \int |1_H(x)v_{R,j}(x)||1_H(x)M_{\mu} u_{Q,i}(x) |\,d\mu(x)\\ &\lesssim \|1_{Q_i} M_{\mu} u_{Q,i}\|_{L^2(\mu)}  \|1_{R_j} v_{R, j}\|_{L^2(\mu)}.
\end{align*}
\end{proof}
We are left to deal with
\begin{align*}
\Big|  \langle b^{T^*}_H, T(1_{\R^n \setminus 5H} u_{Q, i}) \rangle \frac{1}{\mu(H)}& \int_H v_{R, j}\,d\mu \Big| \\&\le |\langle b^{T^*}_H, T(1_{\R^n \setminus 5H} u_{Q, i}) \rangle| \mu(H)^{-1/2}  \|1_{R_j} v_{R, j}\|_{L^2(\mu)}.
\end{align*}
Our final splitting is as follows:
\begin{align*}
\langle b^{T^*}_H, T(1_{\R^n \setminus 5H} u_{Q, i}) \rangle &= \langle b^{T^*}_H, Tu_{Q,i}\rangle 
 - \langle b^{T^*}_H, T(1_{5H \setminus (1+\sigma)H} u_{Q,i})\rangle \\
&\qquad- \langle b^{T^*}_H, T(1_{(1+\sigma)H \setminus H} u_{Q,i})\rangle 
 - \langle b^{T^*}_H, T(1_H u_{Q,i})\rangle.
\end{align*}
We have that
\begin{displaymath}
|\langle b^{T^*}_H, Tu_{Q,i}\rangle| \le \|b^{T^*}_H\|_{L^2(\mu)} \|1_{Q_i}Tu_{Q,i}\|_{L^2(\mu)} \lesssim \mu(H)^{1/2}  \|1_{Q_i}Tu_{Q,i}\|_{L^2(\mu)}.
\end{displaymath}
Using again separation and the fact that $5H \subset Q_i \cap R_j$ we see that
\begin{align*}
|\langle b^{T^*}_H, T(1_{5H \setminus (1+\sigma)H} u_{Q,i})\rangle| &\le C(\sigma) \|b^{T^*}_H\|_{L^2(\mu)}\|1_{Q_i}u_{Q,i}\|_{L^2(\mu)}\\ &\lesssim C(\sigma) \mu(H)^{1/2} \|1_{Q_i}u_{Q,i}\|_{L^2(\mu)}.
\end{align*}
Next, notice that
\begin{align*}
|\langle b^{T^*}_H, T(1_{(1+\sigma)H \setminus H} u_{Q,i})\rangle| &\le  \|T\| \|b^{T^*}_H\|_{L^2(\mu)} \|1_{ \partial^{D^*}_{\textup{gen}(Q_i)-j(\theta)}(\sigma)} 1_{Q_i} u_{Q, i}\|_{L^2(\mu)} \\
&\lesssim \|T\| \mu(H)^{1/2} \|1_{ \partial^{D^*}_{\textup{gen}(Q_i)-j(\theta)}(\sigma)} 1_{Q_i} u_{Q, i}\|_{L^2(\mu)}.
\end{align*}
Finally, we have that
\begin{align*}
|\langle b^{T^*}_H, T(1_H u_{Q,i})\rangle| = |\langle T^*b^{T^*}_H, 1_H u_{Q,i}\rangle| &\le \|1_H T^*b^{T^*}_H\|_{L^2(\mu)} \|1_{Q_i} u_{Q,i}\|_{L^2(\mu)} \\
&\lesssim \mu(H)^{1/2}\|1_{Q_i} u_{Q,i}\|_{L^2(\mu)}.
\end{align*}

Collecting the estimates we see that our $\sigma$-surgery yields the final bound
\begin{align*}
|\langle T(1_{H} u_{Q,i}), 1_{H} v_{R,j}\rangle| 
&\le C(\sigma) \|1_{Q_i} u_{Q,i}\|_{L^2(\mu)}\|1_{R_j} v_{R, j}\|_{L^2(\mu)} \\
&+ C \|1_{Q_i} M_{\mu} u_{Q,i}\|_{L^2(\mu)}  \|1_{R_j} v_{R, j}\|_{L^2(\mu)} \\
&+ C  \|1_{Q_i}Tu_{Q,i}\|_{L^2(\mu)} \|1_{R_j} v_{R, j}\|_{L^2(\mu)} \\
&+ C\|T\| \|1_{ \partial^{D^*}_{\textup{gen}(Q_i)-j(\theta)}(\sigma)} 1_{Q_i} u_{Q, i}\|_{L^2(\mu)} \|1_{R_j} v_{R, j}\|_{L^2(\mu)}.
\end{align*}

\subsection{The final estimate through averaging}
Combining the different surgeries we see that
\begin{align*}
|\langle T(1_{Q_i} u_{Q,i}), 1_{R_j} v_{R,j}\rangle| &\le C(\theta, \sigma) \|1_{Q_i} u_{Q,i}\|_{L^2(\mu)} \|1_{R_j} v_{R,j}\|_{L^2(\mu)} \\
&+ \|T\| \|1_{Q_{i,\textup{bad}}} u_{Q,i}\|_{L^2(\mu)} \|1_{R_j} v_{R,j}\|_{L^2(\mu)} \\
&+ \|T\| \|1_{Q_i} u_{Q,i}\|_{L^2(\mu)} \|1_{R_{j,\textup{bad}}} v_{R,j}\|_{L^2(\mu)} \\
&+ C(\theta)  \|1_{Q_i} M_{\mu} u_{Q,i}\|_{L^2(\mu)}  \|1_{R_j} v_{R, j}\|_{L^2(\mu)} \\
&+ C(\theta)   \|1_{Q_i}Tu_{Q,i}\|_{L^2(\mu)} \|1_{R_j} v_{R, j}\|_{L^2(\mu)} \\
&+ C(\theta) \|T\| \|1_{ \partial^{D^*}_{\textup{gen}(Q_i)-j(\theta)}(\sigma)} 1_{Q_i} u_{Q, i}\|_{L^2(\mu)} \|1_{R_j} v_{R, j}\|_{L^2(\mu)}.
\end{align*}
Using the Cauchy--Schwarz inequality, the property $\#\{Q:\, Q \sim R\} \lesssim 1$, the inequalities \eqref{eq:Au} and \eqref{eq:Bv},
the independence of the different dyadic grids $\mathcal{D}^T$, $\mathcal{D}^{T^*}$ and $\mathcal{D}^*$, and the fact that e.g.
$\mathbb{P}_{\mathcal{D^*}}(x \in  \partial^{D^*}_k(\sigma)) \le c(\sigma)$ for every point $x$ and every generation $k \in \Z$, we see that
\begin{align*}
E_{\mathcal{D}^T} E_{\mathcal{D}^{T^*}} E_{\mathcal{D}^*}  \sum_R \sum_{Q: \, Q \sim R} A_{Q,i}& |\langle T(1_{Q_i} u_{Q,i}), 1_{R_j} v_{R,j}\rangle| B_{R, j} \\
&\le [C(\theta, \sigma) + c(\theta)\|T\| + C(\theta)c(\sigma)\|T\|]\mu(\lambda Q_0).
\end{align*}
Here $\lim_{p \to 0} c(p) = 0$. Let $\upsilon > 0$ be small enough. First choose $\theta$ so small that $c(\theta) \le \upsilon$. Then choose $\sigma$ so small that $C(\theta)c(\sigma) \le \upsilon$.
We have proved the diagonal bound
\begin{displaymath}
E_{\mathcal{D}^T} E_{\mathcal{D}^{T^*}}  \sum_R \sum_{Q: \, Q \sim R} A_{Q,i} |\langle T(1_{Q_i} u_{Q,i}), 1_{R_j} v_{R,j}\rangle| B_{R, j} \le [C + 2\upsilon \|T\|]\mu(\lambda Q_0).
\end{displaymath}
We have completed the proof of our main theorem, Theorem \ref{thm:main}.

\begin{bibdiv}
\begin{biblist}

\bib{AHMTT}{article}{
  author={Auscher, P.},
  author={Hofmann, S.},
  author={Muscalu, C.},
  author={Tao, T.},
  author={Thiele, C.},
  title={Carleson measures, trees, extrapolation, and $T(b)$ theorems},
  journal={Publ. Mat.},
  volume={46},
  date={2002},
  number={2},
  pages={257--325},
}

\bib{AR}{article}{
  author={Auscher, Pascal},
  author={Routin, Eddy},
  title={Local $Tb$ theorems and Hardy inequalities},
  journal={J. Geom. Anal.},
  volume={23},
  date={2013},
  number={1},
  pages={303--374},
}

\bib{AY}{article}{
  author={Auscher, Pascal},
  author={Yang, Qi Xiang},
  title={BCR algorithm and the $T(b)$ theorem},
  journal={Publ. Mat.},
  volume={53},
  date={2009},
  number={1},
  pages={179--196},
}

\bib{Ch}{article}{
  author={Christ, Michael},
  title={A $T(b)$ theorem with remarks on analytic capacity and the Cauchy integral},
  journal={Colloq. Math.},
  volume={60/61},
  date={1990},
  number={2},
  pages={601--628},
}

\bib{Ho1}{article}{
  author={Hofmann, Steve},
  title={A proof of the local $Tb$ Theorem for standard Calder\'on-Zygmund operators},
  date={2007},
  eprint={http://arxiv.org/abs/0705.0840},
}

\bib{Ho2}{article}{
  author={Hofmann, Steve},
  title={A local $Tb$ theorem for square functions},
  conference={ title={Perspectives in partial differential equations, harmonic analysis and applications}, },
  book={ series={Proc. Sympos. Pure Math.}, volume={79}, publisher={Amer. Math. Soc.}, place={Providence, RI}, },
  date={2008},
  pages={175--185},
}


\bib{HCTV}{article}{
   author={Hyt\"onen, T.},
    author={P{\'e}rez, C.},
    author={Treil, S.},
    author={Volberg, A.},
    title={Sharp weighted estimates for the dyadic shifts and $A_2$ conjecture},
   journal = {J. Reine Angew. Math., to appear},
   eprint={http://arxiv.org/abs/1010.0755},
   date = {2010},
}

\bib{seven}{article}{
  author={Hyt\"onen, T.}, 
author={Lacey, M. T.},
  author={Reguera, M. C.},
  author={Martikainen, H.},
  author={Orponen, T.},
  author={Sawyer, E. T.},
      author={Uriarte-Tuero, I.},
  title={Weak And Strong Type Estimates for Maximal Truncations of Calder\'on-Zygmund Operators on $A_p$ Weighted Spaces},
  journal={ J. Anal. Math.},
  volume ={ 118},
  date={2012},
  pages = {177--220},
}  

\bib{HM}{article}{
  author={Hyt{\"o}nen, Tuomas},
  author={Martikainen, Henri},
  title={On general local $Tb$ theorems},
  journal={Trans. Amer. Math. Soc.},
  volume={364},
  date={2012},
  number={9},
  pages={4819--4846},
}

\bib{HN}{article}{
  author={Hyt{\"o}nen, Tuomas},
  author={Nazarov, Fedor},
  title={The local Tb theorem with rough test functions},
  eprint={http://www.arxiv.org/abs/1206.0907},
  date={2012},
}


\bib{LM:SF}{article}{
  author={Lacey, Michael T.},
  author={Martikainen, Henri},
  title={Local $Tb$ theorem with $L^2$ testing conditions and general measures: Square functions},
  journal={Submitted},
  eprint={http://arxiv.org/abs/1308.4571},
  date={2013},
}

\bib{LPR}{article}{
   author={Lacey, Michael T.},
   author={Petermichl, Stefanie},
   author={Reguera, Maria Carmen},
   title={Sharp $A_2$ inequality for Haar shift operators},
   journal={Math. Ann.},
   volume={348},
   date={2010},
   number={1},
   pages={127--141},
}


\bib{LV-perfect}{article}{
  author={Lacey, Michael T.},
  author={V\"ah\"akangas, Antti V.},
  title={The Perfect Local $ Tb$ Theorem and Twisted Martingale Transforms},
  eprint={http://www.arxiv.org/abs/1204.6526},
  journal={Proc. AMS, to appear},
  date={2012},
}

\bib{LV-direct}{article}{
  author={Lacey, Michael T.},
  author={V\"ah\"akangas, Antti V.},
  title={On the Local $Tb$ Theorem: A Direct Proof under Duality Assumption},
  eprint={http://www.arxiv.org/abs/1209.4161},
  journal={Submitted},
  date={2012},
}


\bib{MM}{article}{
  author={Martikainen, Henri},
  author={Mourgoglou, Mihalis},
  title={Square functions with general measures},
  journal={Proc. AMS, to appear},
  eprint={http://www.arxiv.org/abs/1212.3684},
  date={2012},
}


\bib{NTV1}{article}{
   author={Nazarov, F.},
   author={Treil, S.},
   author={Volberg, A.},
   title={The $Tb$-theorem on non-homogeneous spaces},
   journal={Acta Math.},
   volume={190},
   date={2003},
   number={2},
   pages={151--239}
}

\bib{NTVa}{article}{
  author={Nazarov, F.},
  author={Treil, S.},
  author={Volberg, A.},
  title={Accretive system $Tb$-theorems on nonhomogeneous spaces},
  journal={Duke Math. J.},
  volume={113},
  date={2002},
  number={2},
  pages={259--312},
}

\bib{Sa}{article}{
 author={Sawyer, Eric T.},
 title={A characterization of two weight norm inequalities for fractional and Poisson integrals},
 journal={Trans. Amer. Math. Soc.},
 volume={308},
 date={1988},
 number={2},
 pages={533--545},
}

\end{biblist}
\end{bibdiv}

\end{document}